\newtheorem{thm}{Theorem}[section]
\newtheorem{lem}[thm]{Lemma}
\newtheorem{prop}[thm]{Proposition}
\newtheorem{cor}[thm]{Corollary}
\theoremstyle{definition}
\newtheorem{defn}[thm]{Definition}
\newtheorem{ques}[thm]{Question}
\numberwithin{equation}{section}
\numberwithin{figure}{section}
\newcommand{\Z}{\mathbb{Z}}
\newcommand{\C}{\mathcal{C}}
\newcommand{\Q}{\mathbb{Q}}
\newcommand{\R}{\mathcal{R}}
\newcommand{\Real}{\mathbb{R}}
\newcommand{\Comp}{\mathbb{C}}
\renewcommand{\P}{{\mathbb{P}}}
\newcommand{\blf}{\mathcal{B} \ell}
\newcommand{\cA}{\mathcal{A}}
\newcommand{\F}{\mathcal{F}}
\numberwithin{figure}{section}
\title[The Role of Link Concordance in Knot Concordance]{The Role of Link Concordance in Knot Concordance}
\author{Diego Vela}
\address{}
\email{supotuco@gmail.com}
\begin{document}
\date{\today}

\begin{abstract}

Satellite constructions on a knot can be thought of as taking some strands of a knot and then tying in another knot.  Using satellite constructions one can construct many distinct isotopy classes of knots.  Pushing this further one can construct distinct concordance classes of knots which preserve some algebraic invariants.  Infection is a generalization of satellite operations which has been previously studied.  An infection by a string link can be thought of as grabbing a knot at multiple locations and then tying in a link.  Cochran, Friedl and Teichner showed that any algebraically slice knot is the result of infecting a slice knot by a string link \citep{CFT09}.  In this paper we use the infection construction to show that there exist knots which arise from infections by $n$-component string links that cannot be obtained by infecting along an $(n-1)$-component string links.  

\end{abstract}

\maketitle

\section{Introduction}\label{sec:Introduction}

A knot is an embedding of $S^1$ into $S^3$ and we denote it by $K: S^1 \hookrightarrow S^3$, and a link $L$ is an embedding $L: \coprod S^1 \hookrightarrow S^3$.  Knots and links are equivalent if they are ambient isotopic.  Two knots $J$ and $K$ are ambient isotopic exists map $\Phi: S^3 \times I \rightarrow S^3$ such that $\Phi(x,t)$ is a homeomorphism for all $t$, $\Phi(x,0)$ is the identity, and $\Phi(K,1) = J$.  We wish to study the set of algebraically slice knots, which we define below.  In 1969 Levine defined the algebraic concordance group \citep{JL69}.  He also defined a map $\phi$ from the set of knot concordance classes $\C$ to the set of algebraic concordance classes $\cA \C \cong \Z^{\infty} \oplus (\Z/2) ^{\infty} \oplus (\Z/4)^{\infty}$.  We study the kernel of this map.

\begin{defn}
A knot $K$ is algebraically slice if $\phi(K) = 0$, let $\cA \mathcal{S}$ denote $ kernel(\phi)$.  
\end{defn}

In higher dimensions the set of slice knots coincides with the set of algebraically slice knots.  Casson and Gordon showed in \citep{CG76} that the set of algebraically slice knots is a proper subset of the set of slice knots.  We seek to better understand the difference between these two sets.  

In Section \ref{sec:doubOp} we use a modified version of the Cochran-Orr-Teichnrer filtration of $\C$, denoted $\F_n^P$, to prove our main results.  Here $P$ references the modification.  These generalized filtrations are by Cochran-Harvey-Leidy, and Burke.  We denote the original Cochran-Orr-Teichnrer Filtration by $\F_n$.  A controlled way to construct knots $K$ with the property that $K$ is an element of $\F_n^P$ is through satellite constructions; see Section \ref{sec:doubOp}, \citep{CHL11}, \citep{JB14} for details.  We think of a satellite construction as grabbing some strands of a knot $R$ and tying in another knot $K$.  Infection is a generalization of the satellite constructions.  Similarly, we can think of infection as grabbing a knot at multiple locations and tying in a link $L$.  There are three pieces of data in an infection, $R$, $T$ and $L$ and we denote the result as $R_T(L)$; again see Section \ref{sec:doubOp} or \citep{JB14} for details.

Cochran, Friedl and Teichner showed that for any algebraically slice knot $K \in \cA \mathcal{S}$ there exists a link $L$ and a ribbon knot $\R$ such that as concordance classes $[K]$ and $[\R_T(K)]$ are the same \citep[Proposition 1.7]{CFT09}.  In other words, up to the equivalence relation of concordance all knots are obtained by infecting a ribbon knot by a link.  The following questions arise naturally.  

\begin{ques}\label{ques:BigQ}
What is the dependence on the set of links? 
\end{ques}

\begin{ques}\label{ques:LittleQ}
Is there number $\ell$ such that every algebraically slice knot is obtained by an infection by an $\ell$-component string link?
\end{ques}

Our main result, Theorem \ref{thm:nonTriviality}, gives a partial answer to Question \ref{ques:LittleQ}.  Using Theorem \ref{thm:nonTriviality}, we can construct algebraically slice knots which arise from infecting a ribbon knot $\R$ along an $\ell$-component string link $L$ such that $\R_T(L)$ is not concordant to any infection of the form $\R'_{T'}(L')$ where $L'$ is a string link with fewer components.  There are some restrictions on $L$ and $L'$ which prohibits us from a complete answer.

A reasonable philosophy is that one must understand the knot concordance set before one can understand the link concordance set.  Using infection techniques and algebraic techniques developed by Cochran, Harvey, and Leidy in \citep{CHL11} we have given evidence that we must simultaneously understand the set of link concordance classes and knot concordance classes.  The operators developed in \citep{CHL11} are known as doubling operators and are denoted $\R_{\eta}: \C \rightarrow \C$.  These operators are functions on the set of knot concordance classes.  The goal of \citep{CHL11} was to describe a primary decomposition of the concordance classes with respect to different types of doubling operators.

In \citep{JB14} Burke has a theorem which we think of as a triviality result.  His result essentially states that one can construct a modified Cochran-Orr-Teichnrer filtration $\F_n^P$.  This filtration has a nice property that if the higher order Alexander modules of a knot $K$, which is in $\F_n$, do not ``match" $P$ then $K$ is in $\F_{n+1}^P$.  The modified filtration does not guarantee that if a knot has the appropriate higher order Alexander modules that it is nontrivial in $\F_{n}^P/\F_{n+1}^P$.  Our main result fills this gap by constructing examples which are nontrivial in this successive quotient.  A note is that Burke does construct infections by 2-component links in  \citep{JB14}; our results are for $n$-component links.

The paper is organized in the following way.  Section \ref{sec:prelim} contains some basic definitions of concordance and infection.  Section \ref{sec:strongIrred} develops the algebraic tools that we use to obstruct concordances.  In Section \ref{sec:linkPoly} we construct our examples.  Section \ref{sec:filtration} reviews the $n$-solvable filtration and defines the modified version, the $(n,*)$-filtration.  In Section \ref{sec:doubOp} we prove nontriviality of our examples.  In Section  \ref{sec:goodAModule} we prove some important properties of the Blanchfield form for abstract links with given torsion Alexander polynomials(defined in Section \ref{sec:prelim}).  These properties may be sufficient to show abstractly that other links can be substituted into the proof of triviality/nontriviality but one would need to find a link that realizes the properties.

\section{Preliminary Material}\label{sec:prelim}

\subsection{Overview}

We give a brief overview of some definitions and motivation.  For a more complete treatment see \citep{DR90}.  Knot and link theory is related to the smoothing theory of 4-manifolds as follows.  Let $f:\Comp \rightarrow \Comp^2$ be a smooth map whose image is singular at the origin.  Let $U$ be a neighborhood of the singularity.  One can intersect $U$ with the image of $f$ and obtain a link $L$.  We can smooth out $f$ to be an embedding if the link bounds disjoint embedded disks.  From this we have the following definitions.

\begin{defn}
A knot $K$ is {\em smoothly slice} if there exists a smoothly embedded disk $D^2 \hookrightarrow B^4$ such that $\partial(B^4,D^2) = (S^3,K)$.  
\end{defn}

\begin{defn}
A link $L$ is {\em smoothly slice} if each component is slice and all the slice disks can be taken to be disjoint.  
\end{defn}  

\begin{defn}
A slice link $L$ is {\em ribbon} if you can take the slice disks to only have index 0 and index 1 critical points for a Morse function on $B^4$.
\end{defn}

For knots, below we define a natural equivalence relation known as {\em concordance}.  For a knot $J$, let $rJ$ denote the knot $J$ with the opposite orientation.  For a knot $J$, let $\overline{J}$ denote the mirror of $J$.  More specifically we can think of $J$ as a subset of $\mathbb{R}^3$, through stereographic projection, and then $\overline{J}$ is the image of $J$ when we reflect through a plane in $\mathbb{R}^3$.  

\begin{defn}
Two knots $J,K$ are {\em concordant} if $K\# r \overline{J}$ is slice.  We denote the concordance class by $[K]$ or $[J]$.
\end{defn}

We denote the set of concordance classes by $\C$.  One can show that two knots are concordant if and only if they cobound an annulus.  More precisely $J,K$ are concordant if there exists a smooth embedding $A: S^1 \times I \hookrightarrow S^3 \times I$ such that one end of the annulus is $J$ and the other is $K$.  One can define a notion of concordance between two links using this definition.  The problem of studying slice knots is similar to studying knot concordance.

\subsection{String Links}

We use string links throughout this paper, defined as follows.

\begin{defn}\label{defn:stringLink}
Let $x_1,\dots,x_n$ be marked points in $D^2$.  A {\em string link} of $n$-components is a smooth embedding $L: \{1,\dots,n\} \times I \hookrightarrow D^2 \times I$ such that $L(i,0) = (x_i , 0)$ and $L(i,1) = (x_i , 1)$.  
\end{defn}

String links naturally close up to produce links.  The closure can be thought of as gluing the top to the bottom.  It is defined as follows.

\begin{defn}\label{defn:stdClosure}
Given a string link $L$, the {\em standard closure} of $L$ is defined to be the link $L'$ which is obtained by first identifying $D^2 \times \{0\}$ with $D^2 \times \{1\}$ using $-Id$, which yields a solid torus $D^2 \times S^1$.  Fix a point $p$ on the boundary of $D^2$.  Observe that the boundary of $D^2 \times I/\sim$ is a torus.  Inside this torus we have $p\times I$ mapping to $p \times S^1$.  Attach a solid torus to the boundary such that the meridian of the solid torus maps to $p \times S^1$ to obtain $S^3$.
\end{defn}

\begin{defn}\label{defn:knotType}
To each component $L_i$ of a string link $L$ there is a knot $K$ that comes from the standard closure.  We call $K$ the {\em knot type} of the component $L_i$.
\end{defn}

One thing to observe is that if you remove an open neighborhood of the string link with $g$ components from $D^2 \times I$ you have a surface $\Sigma_g$ with genus $g$ on the boundary.  There are some natural curves on the boundary, namely the meridians and the longitudes.  For Definition \ref{defn:meridian} keep in mind that the boundary of $D^2 \times I \backslash L_i$ is a homology solid torus.

\begin{defn}\label{defn:meridian}
For a given string link $L$, a {\em meridian} $\mu_i$ associated to the $i$th component $L_i$ is a curve $\mu_i$ that embeds into $ \partial(D^2 \times I - \nu(L_i))$, with the homology class $[\mu_i]$ generates $H_1(D^2 \times I \backslash L_i; \Z)$ and $\mu_i$ is isotopic to a curve in $\nu(L_i) \backslash L_i$ in $D^2 \times I \backslash L$.    
\end{defn}

\begin{defn}\label{defn:longitude}
For a given string link $L$, a {\em longitude} $\lambda_i$ in $\partial (D^2 \times I \backslash \nu(L))$ is a curve such that $\lambda_i$ which embeds into $\partial(D^2 \times I \backslash \nu(L_i))$ and the geometric intersection of $\lambda_i$ with $\mu_i$ is $1$ and the geometric intersection of $\lambda_i$ with $\mu_j$ is $0$ otherwise.
\end{defn}

\begin{defn}\label{defn:preferredLongitude}
For each component the {\em preferred longitude} is a longitude $\lambda_i$ such that $[\lambda_i] $ is $0$ in $H_1(D^2 \times I \backslash \nu(L))$ under inclusion.
\end{defn}

There is also a formulation of concordance for string links as follows.

\begin{defn}\label{defn:stringLinkConcordance}
We say that two string links $L_1,L_2$ are {\em concordant} if there exists a smooth embedding $A:  (\{1,\dots,n\} \times I) \times I \hookrightarrow D^2\times I \times I$ such that $A(i,I,0) = L_1(i,I)$, $A(i,I,1) = L_2(i,I)$, $A(i,0,t) = (x_i , 0 , t)$ and $A(i,1,t) = (x_i , 1 , t)$.  
\end{defn}

The set of string links with $n$ components forms a monoid with the stacking as the operation.  Using the equivalence relation of concordance the set of string links with $n$ components becomes a group.  We denote the string link concordance groups by $\C_n$ where $n$ is the number of components.  Sometimes we omit the index from $\C_1$, since $\C_1$ is naturally isomorphic to the knot concordance group $\C$.

We also use the zero framed surgery of a string link, defined as follows.

\begin{defn}\label{def:zeroSurgery}
The {\em zero framed surgery} $M_L$ of a string link $L$ is obtained by taking the string link complement and then attaching a copy of $D^2$ along each preferred longitude and then attaching a $B^3$ along the remaining boundary.  
\end{defn}

The exterior of the string link is a surface $\Sigma$.  The $\lambda_i$ are embedded simple closed curves in $\Sigma$ which are also independent in homology.  Therefore, each time a $D^2 \times I$ is attached along a different $\lambda_i$ the genus is reduced by one.  This can be checked by an Euler characteristic argument.  After attaching $n$ disks all that remains is a surface with positive Euler characteristic which is $S^2 = \partial B^3$.  One can think of the zero surgery process as attaching a handlebody to the exterior of the string link.

For the rest of the paper we assume that the components of all links and string links have pairwise linking number 0.  It is a necessary condition for a link to be slice, therefore it is not a strong assumption.  The following is the definition of infecting a knot by a string link.  

\begin{defn}\label{def:infectRbySL}
Let $R$ be a knot, $L$ be a string link, $E(L)$ denote the exterior of $L$, and $T$ the exterior of the trivial string link which is embedded in the $S^3 \backslash R$ such that the preferred longitudes bound embedded disks.  We remove the trivial string link and observe that we have a knot $R \hookrightarrow S^3 - \nu(T)$.  Let $\lambda_i$ and $\mu_i$ denote the preferred longitude and the meridian of $T$.  {\em The infection} on $R$ by a string link $L$, denoted $R_T(L)$, is obtained by identifying $\lambda_i$ with the preferred longitude of $L$ in $E(L)$ and $\mu_i$ with the meridian of $L$ $E(L)$ and then identifying the rest of the boundary.  Since $\Sigma = \partial (S^3 \backslash \nu(T))$ is a surface and a $K(\pi_1(\Sigma),1)$ these conditions are sufficient to define a map on the boundary.  
\end{defn}

One thing to note about string links is that each component is marked.  The $i$-th component of a string link $L$ is denoted $L_i$ and is defined to be the component such that $L(i,0) =  (x_i,0)$.  For example, let $L$ be a 2-component string link such that the standard closure of $L_1$ is a trefoil and the standard closure of $L_2$ is an unknot.  Then let $L'$ be a 2-component string link such that the standard closure of $L'_1$ is an unknot and the standard closure of $L'_2$ is a trefoil.  The standard closures of $L$ and $L'$ these links are equivalent.  The string links $L,L'$ are distinct.  This marking is embedded into the process of infection.  It is inherent into the choice of embedding of $T$.  The pictures in Figure \ref{fig:InfectionProcess} give a visual description of infection by a string link.

\begin{figure}[h!]
\includegraphics[width = 6cm] {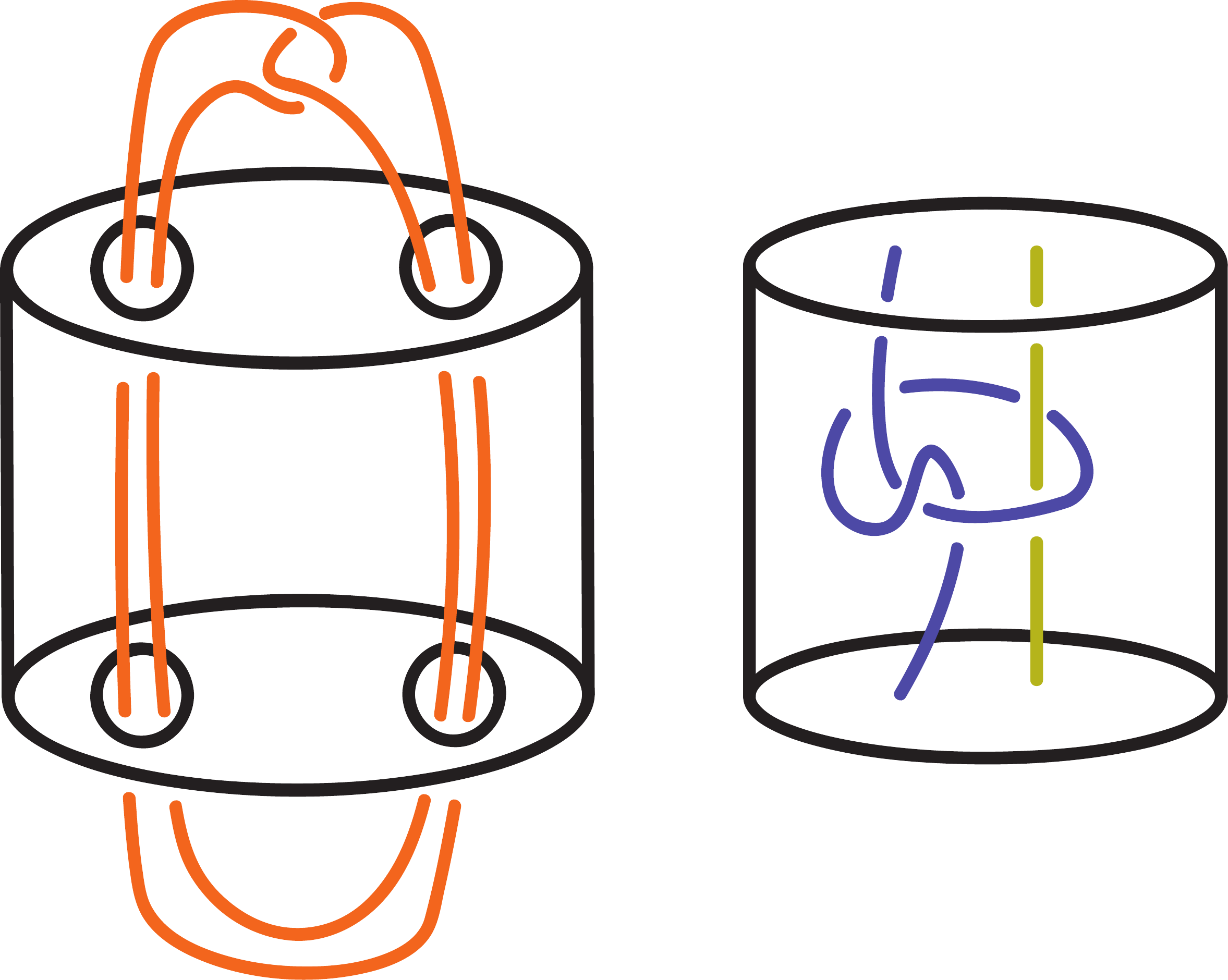}
\put(-175,120){$\R$}
\put(-185,60){$T$}
\put(-50,110){$L$}
\includegraphics[width = 3cm]{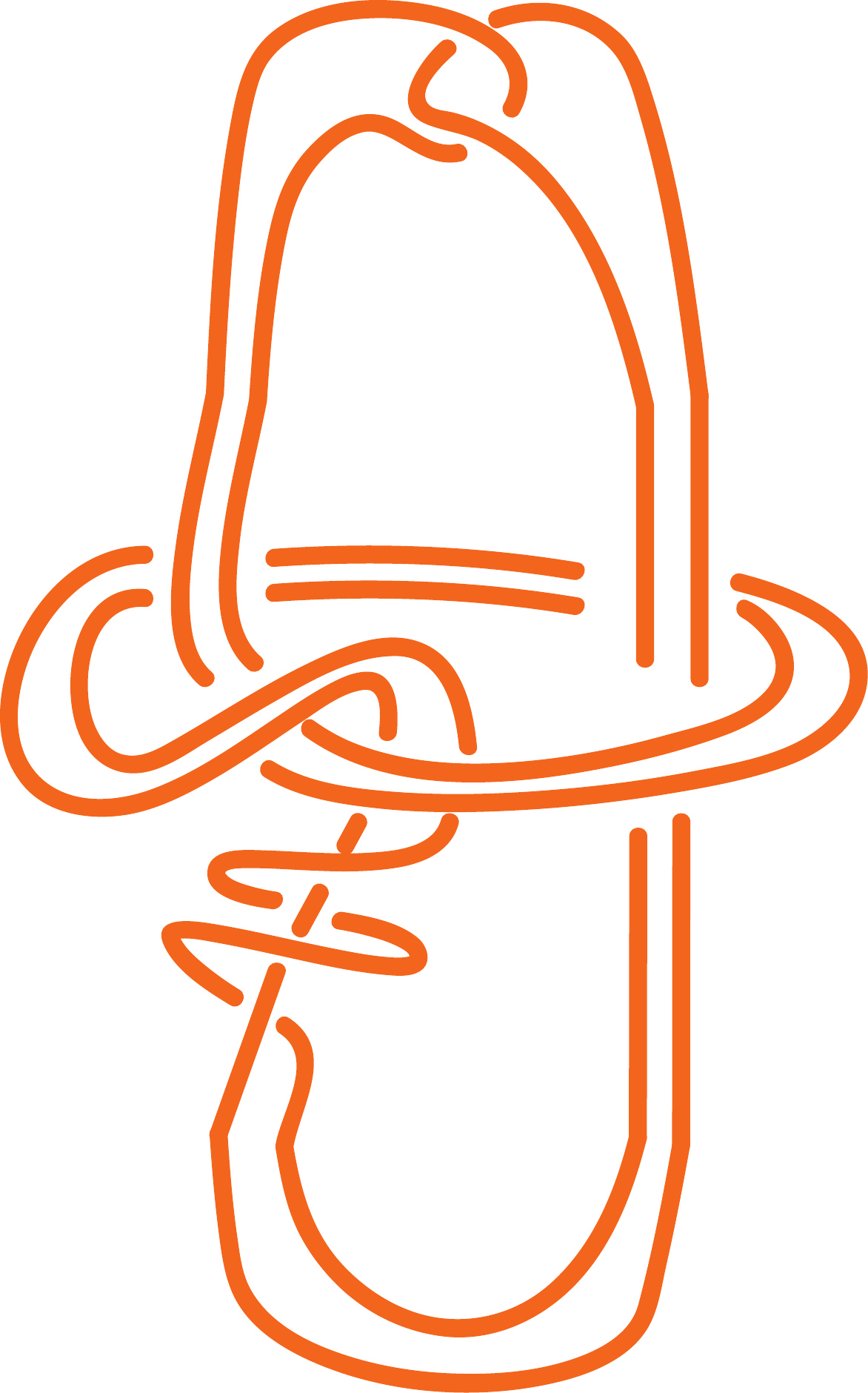}
\put(-10,100){$\R_{T}(L)$}
\caption{Infection Process}
\label{fig:InfectionProcess}
\end{figure}


Moreover a string link can {\em infect a string link} as follows.  Let $L,J$ be string links and let $T$ be a copy of the exterior of a trivial string link embedded in $D^2 \times I - L$ such that the preferred longitudes and meridians bound embedded disks in $D^2\times I$.  The infection on $L$ by $J$, denoted $L_T(J)$, is obtained by following the process in Definition \ref{def:infectRbySL}.

When $T$ is understood we will suppress it from the notation and just write $R(L)$ or $L(J)$.  Lemma \ref{lem:wellDefined} is well known but we include it for completeness.

\begin{figure}[h!]
\includegraphics[width = 6 cm]{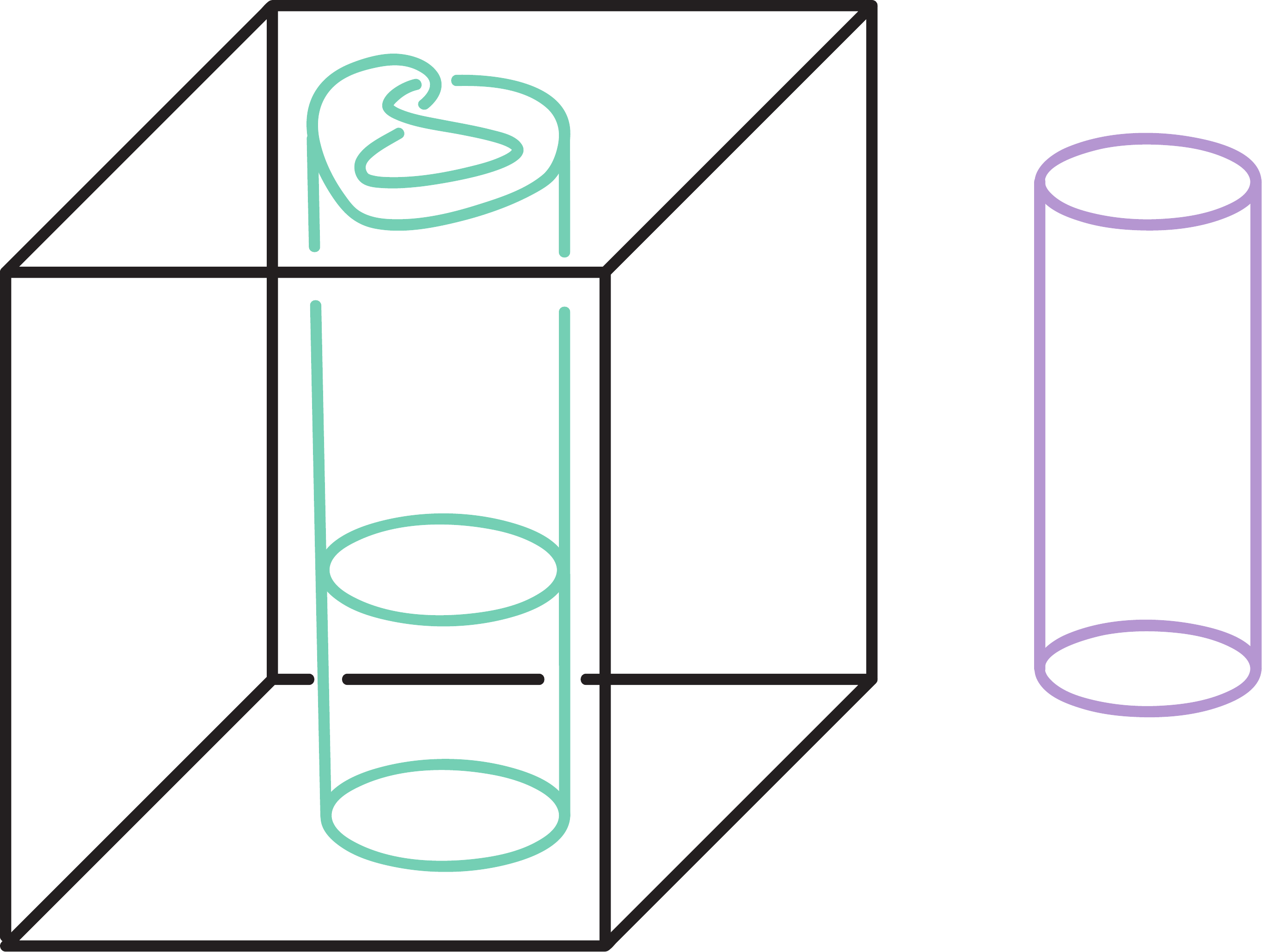}
\put(-240,90){$(S^3 \times I) - C$}
\put(10,90){$L \subset (S^3 - \eta) \times I$}
\caption{Concordance}
\label{fig:Concordance}
\end{figure}

\begin{lem}\label{lem:wellDefined} 
Infection is well defined on concordance classes of string links.
\end{lem}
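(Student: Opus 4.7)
The plan is to promote a string link concordance $A$ between $L$ and $L'$ to an ambient concordance between $R_T(L)$ and $R_T(L')$, by performing the infection operation fiber-wise along the $I$-coordinate of the concordance. The analogous argument, with the roles of $R$ and the infecting string link reversed, handles concordances of $R$, so I restrict attention to varying $L$.

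First, I would start with the trivial product concordance $R \times I \hookrightarrow S^3 \times I$ from $R$ to itself and extend the given embedding $T \hookrightarrow S^3 \setminus R$ to a product embedding $T \times I \hookrightarrow (S^3 \setminus R) \times I$. Excise $\nu(T) \times I$. By Definition \ref{defn:stringLinkConcordance}, the concordance $A$ restricts to the product embedding on a collar of the boundary of $D^2 \times I \times I$, so the exterior $E(A) := (D^2 \times I \times I) \setminus \nu(A)$ has a distinguished boundary piece diffeomorphic to $\partial \nu(T) \times I$.

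Second, I would glue $E(A)$ into $\bigl((S^3 \setminus \nu(T)) \times I\bigr)$ along this boundary piece, using at each time $t \in I$ the infection identification from Definition \ref{def:infectRbySL}: meridians of $T$ to meridians of $L_t$ and preferred longitudes to preferred longitudes. Inside the resulting 4-manifold $W$, the union of $R \times I$ with $A$, glued along the $2n$ longitudinal arcs where they meet on $\partial \nu(T) \times I$, is a properly embedded smooth annulus $B$ whose boundary is $R_T(L) \subset W|_{t=0}$ and $R_T(L') \subset W|_{t=1}$. By construction each boundary slice $W|_{t \in \{0,1\}}$ is exactly the ambient $S^3$ produced by the standard infection construction.

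The main obstacle is showing that $W$ is actually diffeomorphic to $S^3 \times I$, rather than some more exotic cobordism between $S^3$ and $S^3$, since $E(A)$ is not a product $E(L) \times I$ as a 4-manifold. The key observation is that $W$ depends only on the boundary gluing homeomorphism on $\partial \nu(T) \times I$, and that gluing is by construction the product (in $t$) of the single-time infection identification. Hence $W$ agrees with the 4-manifold $(S^3 \times I)$ obtained by excising $\nu(T) \times I$ and regluing via the same product identification; a collar argument on $\partial \nu(T) \times I$ identifies the two. Once $W \cong S^3 \times I$ is established, the annulus $B$ is precisely a knot concordance in the sense of Section \ref{sec:prelim} from $R_T(L)$ to $R_T(L')$, proving that the infection map descends to concordance classes.
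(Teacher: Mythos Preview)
Your overall plan---promote the string-link concordance $A$ to an ambient annulus between $R_T(L)$ and $R_T(L')$ by a cut-and-paste---is the same strategy the paper uses, but the step where you identify the ambient $W$ with $S^3\times I$ is not justified as written. You assert that ``$W$ depends only on the boundary gluing homeomorphism on $\partial\nu(T)\times I$'' and then compare $W$ to the manifold obtained by excising $\nu(T)\times I$ from $S^3\times I$ and regluing via that product map. But that comparison manifold is $S^3\times I$ only because one is gluing $\nu(T)\times I$ back in; you are gluing in $E(A)$, the exterior of $n$ properly embedded (and in general knotted) disks in $D^2\times I\times I\cong B^4$. There is no reason $E(A)$ should be diffeomorphic to $T\times I$ rel the gluing boundary---its fundamental group, for instance, need not be free---so the two fillings can differ. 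A collar argument lets you adjust the gluing map up to isotopy; it does not let you swap one filling $4$-manifold for another.

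The paper sidesteps this by running the cut-and-paste from the opposite side. Rather than keeping $(S^3\setminus T)\times I$ fixed and gluing in the concordance exterior $E(A)$, it keeps the concordance ambient $D^2\times I\times I$ fixed, removes only the tubular neighbourhood $\nu(A)$ of the concordance square (a $4$-ball $D^2\times I\times I$), and glues in the explicit product $(D_\eta\times I)\times I$ (another $4$-ball, now carrying the product strands of the infected link) along the same $S^1\times I\times I$ with the tautological identification. Removing a $4$-ball and regluing an identically parametrised $4$-ball leaves the ambient unchanged; the remaining piece $X'\times I$ is already a product, so the total space is visibly $(D^2\times I)\times I$. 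Your argument can be repaired in the same spirit: enlarge what you excise from $T$ to the full $3$-ball $B=D^2\times I$ in which $T$ sits as the trivial-tangle exterior; then the ambient is tautologically $(S^3\setminus B)\times I\cup (B\times I)=S^3\times I$, the annulus is $(R\cap(S^3\setminus B))\times I$ glued to $A\subset B\times I$, and no nontrivial $4$-manifold identification is needed.
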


\begin{proof} We show that if $c,d$ are concordant as string links then $L(c)$ and $L(d)$ are concordant.  We focus on $c,d$ being 1-component string links because the proof for arbitrary components is the same.  For the infections $L(-)$ we have a simple closed curve $\eta \subset D^2 \times I$ where $\eta \cap L = \emptyset$.  Take a tubular neighborhood of $\eta$ which is a solid torus, call it $\nu(\eta)$.  The standard longitude of $\nu(\eta)$ bounds a disk in $D^2 \times I$ which has a product neighborhood and intersects $L$ transversely, call it $D_\eta$.  Take the product neighborhood such that each intersection of $D_\eta$ with $L$ goes from one side to the other.  The plan is to glue in $D_\eta \times I$ into the concordance.  Since $c,d$ are concordant there exists an embedding of  $I \times I$ into $D^2 \times I$ such that $(a,0) = c$ and $(a,1) = d$ and $I \times I $ has a product neighborhood.  So take the product neighborhood of the embedded $I \times I$ to get an embedding of $D^2 \times I \times I$.  Remove the product neighborhood and replace it with $D_\eta \times I \times I$, call the resulting space $Y$.  Next we build the concordance.  Remove $D_\eta \times I \cup \nu(\eta)$ from $ D^2 \times I$ and call the resulting space $X'$.  Let $X = X' \times I$ and glue in $Y$ such that $\{x\times \{0\}\} = L(c)$ and $\{x\times \{1\}\} = L(d)$ to get the concordance.  The reason it is the same for string links with more components is that the embeddings of $I \times I$ are disjoint\end{proof}
\subsection{Algebraic Invariants}

Throughout the paper we use a definition of the Alexander polynomial which differs from the classical one.  Typically the Alexander polynomial is defined as an annihilator of the Alexander module. For a link, the Alexander module may have a torsion free part.  Before we define the actual polynomial we list some algebraic definitions and the definition of the torsion Alexander module. 

\begin{defn}\label{def:torsionAlexanderModule}
Let $L$ be a link and $\Gamma = \pi_1/[\pi_1,\pi_1]$.  The {\em torsion Alexander module}, $T\cA$, of a link $L$, is the torsion submodule of the Alexander module $H_1(S^3 \backslash L, \Z[\Gamma])$, that is, the submodule $$T\cA(L) = \{x \in H_1(S^3 \backslash L, \Z[\Gamma])  \mid \exists p \neq 0 \in \Z[\Gamma]\mbox{ such that }px = 0\}.$$
\end{defn}

The group $\Gamma$ in Definition \ref{def:torsionAlexanderModule} is isomorphic to $\Z^n$, where $n$ is the number of components in $L$, since the components are pairwise linking number 0.  For a knot the torsion Alexander module is equal to the classical Alexander module.  For a link it is possible for the torsion Alexander module to be trivial, but the module to be nontrivial.  Now we repeat some algebraic definitions; more details can be found in \citep[Ch. 3]{JH12}.  We are assuming that we have a module $M$ and a presentation for the module $M$, that is an exact sequence $R^p \rightarrow R^q \rightarrow M \rightarrow 0$.  

\begin{defn}\label{defn:elementaryIdeal}\citep[Ch. 3]{JH12}
The {\em k-th elementary ideal}, $E_k(M)$ is the ideal generated by the $(q-k)\times(q-k)$ sub-determinants of the matrix presenting $M$.  
\end{defn}

\begin{defn}\citep[Ch. 3]{JH12}\label{defn:divisorialHull}
Given an ideal $I$ in a ring $R$, the {\em divisorial hull} $\tilde{I}$ is the intersection of the principal ideals of $R$ which contain $I$.  This is a principal ideal.
\end{defn}

Since $\Z[t_1^{\pm1},\dots,t_n^{\pm1}]$ is a unique factorization domain and $T\cA(L)$ is finitely generated over $\Z[t_1^{\pm1},\dots,t_n^{\pm1}]$, we have a generator $\Delta_k( T\cA(L) )$ of $\widetilde{E}_k(T\cA(L))$.  Note that for $\Delta_k$ in Hillman's notation $k$ is a natural number.  

\begin{defn}\label{def:torsionAlexanderPolynomial}  Let $\widetilde{E}_0(T\cA(L))$ be the divisorial hull of the 0-th elementary ideal.  We define the {\em torsion Alexander polynomial} $\Delta_L$ of a link $L$ to be a generator of $\widetilde{E}_0(T\cA(L))$.  The {\em torsion Alexander polynomial} $\Delta_{L_2}$ of a string link $L_2$ is the torsion Alexander polynomial of the standard closure of $L_2$
\end{defn}

One thing to note is that the torsion Alexander polynomial agrees with the classical Alexander polynomial for a knot.  See \citep[Ch. 3]{JH12} for details.

\section{Detecting Strong Irreducibility}\label{sec:strongIrred}

\subsection{Strong Irreducibility}
Some of the items in this section are well known; see \citep{hartshorneAlgGeometry} and \citep{SHA13} for details.  The original results in this section are related to {\em strongly irreducible} and {\em strongly coprime}.

We begin with the definition of strongly coprime and then we focus on some specific cases that will be used for applications.  Let $R$ be a commutative ring with unity and consider the polynomial algebras $R[x_1,\dots,x_n]$ and $R[x_1^{\pm1},\dots,x_n^{\pm1}]$.

\begin{defn}\label{def:strongCp}
Suppose that $p(x_1,\dots,x_n)$ and $q(x_1,\dots,x_n)$ are in $R[x_1,\dots,x_n]$.  We say that $p$ and $q$ are {\em strongly coprime}, denoted $\widetilde{(p,q)} = 1$, if for any finitely generated free abelian group $A$ and for each pair of linearly independent sets $\{a_1,\dots,a_n\}$, $\{b_1,\dots,b_n\} \subset A$ we have that $p(a_1,\dots,a_n)$ is coprime to $q(b_1,\dots,b_n)$ over the group ring $R[A]$.  When $q$ is a polynomial with fewer variables we would only use a subset of $\{b_i\}$ that has as many vectors as $q$ has variables.  If two polynomials are not strongly coprime then we say they are {\em isogenous}.
\end{defn}

\begin{defn}\label{def:strongIr}
A polynomial $p(x_1,\dots,x_n)$ in $R[x_1,\dots,x_n]$ is {\em strongly irreducible} if for any finitely generated free abelian group $A$ and a linearly independent set $\{a_1,\dots,a_n\}$ the evaluation $p(a_1,\dots,a_n)$ is irreducible in $R[A]$.  
\end{defn}

For the rest of the paper we focus on the cases $R = \Z,\Q,\overline{\Q},\Real,\Comp$, where $\overline{\Q}$ is the algebraic completion of $\Q$.  We also choose to work with homogeneous polynomials in order to use some results from algebraic geometry to get more information about these polynomials.  Definition \ref{def:strongCp} is similar to Definition 4.1 in \citep{JB14} but differs slightly because Burke does not require that $\{b_i\}$ is a linearly independent set.  The applications for the definitions end up being the same.  In Section \ref{sec:filtration} we use the polynomials to create a localization set.  While we have different definitions of being strongly coprime one can use the linear dependence in Definition 4.1 from \citep{JB14} to show that the localization sets in \ref{sec:filtration} are the same sets.

Showing two polynomials are strongly coprime is a difficult problem so we focus on polynomials being strongly irreducible.  A polynomial $p$ in $R[x_1^{\pm1},\dots,x_n^{\pm1}]$ is strongly irreducible if for any nonzero integral choice of $\{t_i\}_{i=1}^{n}$ the polynomial $p(x_1^{t_1},\dots,x_n^{t_n})$ is irreducible.  Lemma \ref{lem:iredThenLaurentIred} allows us to work over $R[x_1,\dots,x_n]$ instead and we will apply it implicitly.  We remind the reader that we are focusing on the rings $R = \Z,\Q,\overline{\Q},\mathbb{R},\Comp$ so that the polynomial rings $R[x_1,\dots,x_n]$ are unique factorization domains.  

\begin{lem}\label{lem:iredThenLaurentIred}
If $p(x_1,\dots,x_n)$ is irreducible in $R[x_1,\dots,x_n]$ and if $p(x_1,\dots,x_n) \neq kx_j$ for any $j$ and any $k$ in $R$ then $p(x_1,\dots,x_n)$ is irreducible in $R[x_1^{\pm1},\dots,x_n^{\pm1}]$.
\end{lem}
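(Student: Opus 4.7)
The plan is to exploit the fact that $R[x_1^{\pm 1}, \dots, x_n^{\pm 1}]$ is a localization of the UFD $R[x_1, \dots, x_n]$---specifically, the localization at the multiplicative set $S$ generated by $x_1, \dots, x_n$. So any irreducibility question in the Laurent ring can be handled by tracking prime factorizations in the polynomial ring and then invoking standard facts about how primes behave under localization.

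Concretely, I would assume a factorization $p = fg$ in $R[x_1^{\pm 1}, \dots, x_n^{\pm 1}]$ and clear denominators: choose monomials $M_1, M_2 \in S$ such that $F := M_1 f$ and $G := M_2 g$ both lie in $R[x_1, \dots, x_n]$. The resulting equation $FG = M_1 M_2\, p$ now lives in the polynomial ring, where we can apply unique factorization. On the right-hand side the irreducible factors are $p$ itself (irreducible by hypothesis) together with the variables $x_i$ appearing in $M_1 M_2$ (each $x_i$ being irreducible). Therefore, up to units of $R$, each of $F$ and $G$ must be a product of some subset of these irreducibles; in particular one of them absorbs the single copy of $p$ and the other is a pure monomial in the $x_i$. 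Dividing by $M_1$ or $M_2$, that pure monomial becomes a Laurent monomial, which is a unit of $R[x_1^{\pm 1}, \dots, x_n^{\pm 1}]$, so the factorization $p = fg$ is trivial.

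The one remaining loose end is to ensure that $p$ itself is not already a unit in the Laurent ring, since otherwise ``trivial factorization'' is vacuous and does not prove irreducibility. Units of $R[x_1^{\pm 1}, \dots, x_n^{\pm 1}]$ have the form (unit of $R$)$\cdot$(Laurent monomial), and an element of $R[x_1,\dots,x_n]$ of this shape is either a non-unit constant irreducible of $R$ (which remains a non-unit after inverting the $x_i$, since inverting $x_1,\dots,x_n$ does not invert any nonzero element of $R$) or else a unit multiple of some single $x_j$. Since $p$ is irreducible, any expression $p = kx_j$ with $k \in R$ forces $k$ to be a unit in $R[x_1, \dots, x_n]$, hence in $R$; this is precisely the case the hypothesis excludes. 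No step is especially delicate---the main point requiring care is keeping track of what counts as a unit in each of the rings listed before the lemma, so that the constant-irreducibles in $R$ (e.g., integer primes when $R=\Z$) are handled correctly.
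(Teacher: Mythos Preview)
Your argument is correct and is essentially the same as the paper's: both clear denominators by multiplying $f$ and $g$ by suitable monomials, then invoke unique factorization in $R[x_1,\dots,x_n]$ to see that one of the resulting polynomial factors is a pure monomial (hence the corresponding $f$ or $g$ is a Laurent unit). Your treatment of the ``$p$ not a unit'' issue is in fact somewhat more explicit than the paper's, but the underlying idea is identical.
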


\begin{proof} Let $p(x_1,\dots,x_n) \in R[x_1,\dots,x_n]$ such that $p$ is irreducible and assume to the contrary that $p$ is not Laurent irreducible and not a unit in $R[x_1^{\pm1},\dots,x_n^{\pm1}]$.  In other words $p$ is not equal to $kx_j$ where $k$ is a unit in $R$.  Then we can factor $p(x_1,\dots,x_n) = f g$ where $f,g \in R[x_1^{\pm1},\dots,x_n^{\pm1}]$ such that neither $f$ nor $g$ is a unit.  Let $r_i,s_i$ be the absolute values of the lowest degrees of $x_i$ in $f$ and $g$ respectively.  Then $p(x_1,\dots,x_n) x_1^{r_1+s_1} \cdots x_n^{r_n+s_n} = f x_1^{r_1} \cdots x_n^{r_n} g x_1^{s_1} \cdots x_n^{s_n}$.  Let $F = f x_1^{r_1} \dots x_n^{r_n}$ and observe that $F \in R[x_1,\dots,x_n]$.  Similarly $G = g  x_1^{s_1} \dots x_n^{s_n}$ in $R[x_1,\dots,x_n]$.  Then $p(x_1,\dots,x_n) x_1^{r_1+s_1} \dots x_n^{r_n+s_n} = FG$ is an equation in $R[x_1,\dots,x_n]$.  Since $R[x_1,\dots,x_n]$ is a unique factorization domain, if $R$ is, we have that $$p(x_1,\dots,x_n) x_1^{r_1+s_1} \dots x_n^{r_n+s_n} = FG = F'G' x_1^{r_1+s_1} \dots x_n^{r_n+s_n}.$$  Since $R[x_1,\dots,x_n]$ is an integral domain we can cancel $x_i^{r_i+s_i}$ from both sides.  Since $F'$ and $G'$ are polynomials in $R[x_1,\dots,x_n]$ and since $p$ is irreducible, then either $F'$ or $G'$ is a unit in $R[x_1,\dots,x_n]$.  Since the only units in $R[x_1,\dots,x_n]$ are constants this implies that either $F' = k$ or $G' = k$.  Without loss of generality suppose $F' = k$.  Observing that $F' = fx_1^{t_1}\cdots x_n^{t_n}$ we reach a contradiction because $f = F' x_1^{-t_1}\cdots x_n^{-t_n}$ is a unit in $R[x_1^{\pm1},\dots,x_n^{\pm1}]$\end{proof}

It is sometimes easier to work with polynomials in $R[x_1,\dots,x_n]$, which leads us to the following definition.

\begin{defn}\label{def:strIred2}
A polynomial $p$ in $R[x_1,\dots,x_n]$ is strongly irreducible if for any nonnegative choice of $\{t_i\}$ the polynomial $p(x_1^{t_1},\dots,x_n^{t_n})$ is irreducible.
\end{defn}
Lemma \ref{lem:iredThenLaurentIred} tells us that if a polynomial is strongly irreducible over $R[x_1,\dots,x_n]$ then it is strongly irreducible.  {\em For the rest of this section by strongly irreducible we will mean strongly irreducible in $R[x_1,\dots,x_n]$}.  Proposition \ref{prop:irreducibleIFFForAllK} is a tool to easily determine when a polynomial is strongly irreducible.  We then prove a result to show that no information is lost in working with homogeneous polynomials.  Most of our results relate to strongly irreducible polynomials.

\begin{prop}\label{prop:irreducibleIFFForAllK}
The polynomial $p(x_1,\dots,x_n)$ in $R[x_1,\dots,x_n]$ is strongly irreducible if and only if for any $k > 0,$ $p(x_1^k,\dots,x_n^k)$ is irreducible.
\end{prop}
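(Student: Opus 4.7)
One direction is immediate from Definition \ref{def:strIred2}: if $p$ is strongly irreducible then in particular, for each $k>0$, setting $t_1=\cdots=t_n=k$ shows that $p(x_1^k,\dots,x_n^k)$ is irreducible. The interesting direction is to show that irreducibility of the ``diagonal'' substitutions $p(x_1^k,\dots,x_n^k)$ forces irreducibility of every mixed substitution $p(x_1^{t_1},\dots,x_n^{t_n})$.

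For this direction, my plan is to argue by contradiction. Assume that $p(x_1^k,\dots,x_n^k)$ is irreducible in $R[x_1,\dots,x_n]$ for every $k>0$, and suppose $p$ is not strongly irreducible. Then there exist positive integers $t_1,\dots,t_n$ and nonunits $f,g \in R[x_1,\dots,x_n]$ such that
$$p(x_1^{t_1},\dots,x_n^{t_n}) = f(x_1,\dots,x_n)\, g(x_1,\dots,x_n).$$
The key trick is to put $k := t_1 t_2 \cdots t_n$ (any common multiple would do), and apply the ring endomorphism $\sigma: x_i \mapsto x_i^{k/t_i}$ to both sides. Since $\sigma\circ(x_i\mapsto x_i^{t_i})$ coincides with $x_i\mapsto x_i^{k}$, we obtain
$$p(x_1^{k},\dots,x_n^{k}) = f(x_1^{k/t_1},\dots,x_n^{k/t_n})\, g(x_1^{k/t_1},\dots,x_n^{k/t_n}),$$
which produces a factorization of $p(x_1^k,\dots,x_n^k)$ contradicting the standing hypothesis, provided we can show that neither factor is a unit in $R[x_1,\dots,x_n]$.

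The remaining verification is the only real content beyond the trick. Since $R$ is an integral domain (we are in the cases $R=\Z,\Q,\overline{\Q},\Real,\Comp$), the units of $R[x_1,\dots,x_n]$ are precisely the units of $R$. The nonunits are thus the zero polynomial, the nonunit constants of $R$, and the nonconstant polynomials. I will check case-by-case that $\sigma$ sends each type of nonunit to a nonunit: $\sigma(0)=0$; constants are fixed by $\sigma$; and if $f$ is nonconstant, then any monomial $c\,x_1^{a_1}\cdots x_n^{a_n}$ of $f$ with some $a_i>0$ maps to the monomial $c\,x_1^{a_1 k/t_1}\cdots x_n^{a_n k/t_n}$ of $\sigma(f)$, whose multi-degree is still nonzero and, because the map $(a_1,\dots,a_n)\mapsto(a_1k/t_1,\dots,a_nk/t_n)$ is injective on exponent tuples, cannot be canceled by another monomial. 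Hence $\sigma(f)$ remains nonconstant, and similarly for $\sigma(g)$. This completes the contradiction.

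The proof is short and there is no serious obstacle; the main thing to be careful about is the last step, ensuring that the substitution $\sigma$ preserves the property of being a nonunit (so as to get a genuinely nontrivial factorization of $p(x_1^k,\dots,x_n^k)$), which relies on the fact that $R[x_1,\dots,x_n]^{\times} = R^{\times}$ for an integral domain $R$ and on the injectivity of exponent scaling.
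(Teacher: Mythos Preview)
Your proof is correct and follows essentially the same approach as the paper: both argue the nontrivial direction by contrapositive, taking a bad substitution $p(x_1^{t_1},\dots,x_n^{t_n})=fg$ and then applying a further substitution $x_i\mapsto x_i^{m/t_i}$ (you take $m=\prod t_i$, the paper takes $m=\mathrm{lcm}(t_1,\dots,t_n)$; as you note, any common multiple works) to produce a factorization of $p(x_1^{m},\dots,x_n^{m})$. Your explicit verification that $\sigma$ carries nonunits to nonunits is a point the paper glosses over with ``since $f$ and $g$ are not constant''; conversely, the paper includes a short reduction from possibly negative exponents to positive ones (relevant because of the interplay with the Laurent setting), which you bypass by starting directly with positive $t_i$.
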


\begin{proof} Assume that $p$ is strongly irreducible.  Letting $t_i = k$ for each $i$, we see that $p(x_1^k,\dots,x_n^k)$ is irreducible.

To prove the other direction we show the contrapositive.  Assume that $p(x_1^{t_1},\dots,x_n^{t_n})$ factors for some choice of $\{t_i\}_{i=1}^{n}$.  We fist establish that it is sufficient to consider $t_i > 0$.  If $t_i < 0 $ then for we make the substitution by setting $x_i$ equal to $y_i$ if $t_i > 0$ and $y_i^{-1}$ if $t_i < 0$.  Then $p(x_1^{t_1},\dots,x_n^{t_n}) = p(y_1^{\mid t_1 \mid}, \dots, y_n^{\mid t_n \mid})$ in $R[y_1,\dots,y_n]$.  Since we are assuming that $p(x_1^{t_1},\dots,x_n^{t_n})$ factors over $R[x_1^{\pm1},\dots,x_n^{\pm1}]$, it follows that $p(y_1^{\mid t_1 \mid}, \dots, y_n^{\mid t_n \mid})$ factors over $R[y_1^{\mid t_1 \mid}, \dots, y_n^{\mid t_n \mid}]$.  By Lemma \ref{lem:iredThenLaurentIred} it factors over $R[y_1,\dots,y_n]$.  Therefore it suffices to assume that $t_i > 0$ for all $i$.  

Then $p(x_1^{t_1},\dots,x_n^{t_n}) = f(x_1,\dots,x_n) g(x_1,\dots,x_n)$, for some $f$ and $g$ which are not constant.  Let $L = lcm(t_1,\dots,t_n)$ and set $t_i' = L/t_i$.  Substitute $x_i^{t_i'}$ into $p(x_1^{t_1},\dots,x_n^{t_n})$ to get $p((x_1^{t_1'})^{t_1},\dots,(x_n^{t_n'})^{t_n}) = p(x_1^L,\dots,x_n^L) = f(x_1^{t_1'},\dots,x_n^{t_n'}) g(x_1^{t_1'},\dots,x_n^{t_n'})$.  Since $f$ and $g$ are not constant we have found a nontrivial factorization of $p(x_1^L,\dots,x_n^L)$\end{proof}

One important process in this paper is {\em homogenization}.  Begin with a polynomial $p(x_1,\dots,x_n)$ and make the substitution $x_i = z_i/z_0$.  Then we multiply by $z_0^{deg(p)}$ to get $P(z_0,\dots,z_n) = z_0^{deg(p)} p(z_1/z_0,\dots,z_n/z_0)$.  We refer to $P$ as the {\em homogeneous counterpart} of $p$.

\begin{lem}\label{lem:hmgFactorIntoHmg}
If a homogeneous polynomial $p(x_0,\dots,x_n)$ factors into two polynomials $f(x_0,\dots,x_n)$, $g(x_0,\dots,x_n)$, then both $f$ and $g$ are homogeneous polynomials.
\end{lem}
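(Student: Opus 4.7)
The plan is to exploit the $\mathbb{N}$-grading of the polynomial ring $R[x_0,\dots,x_n]$ by total degree. First I would decompose $f$ and $g$ into their homogeneous components, writing
\[
f = f_a + f_{a+1} + \cdots + f_r, \qquad g = g_b + g_{b+1} + \cdots + g_s,
\]
where $f_i$ (respectively $g_j$) denotes the total-degree-$i$ (respectively total-degree-$j$) piece, and the indices are chosen so that the four extremal pieces $f_a, f_r, g_b, g_s$ are all nonzero.

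Next I would expand $fg$ and collect terms by total degree. The lowest-degree nonzero part of $fg$ is $f_a g_b$, sitting in total degree $a+b$, and the highest-degree nonzero part is $f_r g_s$, sitting in total degree $r+s$. Because we are working over one of $R = \Z, \Q, \overline{\Q}, \Real, \Comp$, the ring $R[x_0,\dots,x_n]$ is an integral domain, so both $f_a g_b$ and $f_r g_s$ are genuinely nonzero and are the actual extremal homogeneous components of $fg$.

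Finally I would use the hypothesis that $p = fg$ is homogeneous of some degree $d$. Then $fg$ has only one nonzero homogeneous component, forcing $a+b = d = r+s$. Combined with the trivial inequalities $a \le r$ and $b \le s$, this forces $a = r$ and $b = s$, so $f = f_r$ and $g = g_s$ are each concentrated in a single total degree and hence homogeneous.

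The only real input beyond formal bookkeeping of the grading is the nonvanishing of the extremal products $f_a g_b$ and $f_r g_s$, so the main (and mild) obstacle is making sure we invoke the integral domain property of the coefficient ring; the rest is mechanical.
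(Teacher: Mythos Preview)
The paper states this lemma without proof, treating it as a standard fact. Your argument is correct and is the usual one: decompose $f$ and $g$ into homogeneous components, use that $R[x_0,\dots,x_n]$ is an integral domain so the extremal products $f_a g_b$ and $f_r g_s$ are nonzero, and then homogeneity of $p$ forces the bottom and top degrees to coincide.
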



\begin{lem}\label{lem:irredIFFHmgIrred}
A polynomial $p(x_1,\dots,x_n)$ is irreducible if and only if the homogeneous counterpart $P(z_0,\dots,z_n)$ is irreducible.
\end{lem}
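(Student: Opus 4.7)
I would argue both directions by contrapositive, using the explicit formulas for homogenization $P(z_0,\dots,z_n) = z_0^{\deg p}\, p(z_1/z_0,\dots,z_n/z_0)$ and dehomogenization $p(x_1,\dots,x_n) = P(1,x_1,\dots,x_n)$.

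For the $(\Leftarrow)$ direction, suppose $p$ is reducible. Write $p = fg$ with $f,g$ non-constant, so $d_f := \deg f \geq 1$ and $d_g := \deg g \geq 1$ and $d_f + d_g = \deg p$ (using that $R$ is an integral domain). Let $F,G$ be the homogeneous counterparts of $f,g$. A direct calculation gives
\[
FG \;=\; z_0^{d_f+d_g}\, f(z_1/z_0,\dots)\, g(z_1/z_0,\dots) \;=\; z_0^{\deg p}\, p(z_1/z_0,\dots) \;=\; P,
\]
and since $F,G$ are homogeneous of positive degree they are non-constant, so $P$ is reducible.

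For the $(\Rightarrow)$ direction, suppose $P = FG$ with $F,G$ non-constant. By Lemma \ref{lem:hmgFactorIntoHmg}, both $F$ and $G$ are homogeneous. Setting $z_0 = 1$ gives $p(x_1,\dots,x_n) = F(1,x_1,\dots,x_n)\, G(1,x_1,\dots,x_n)$. The remaining task is to verify that neither $F(1,x)$ nor $G(1,x)$ is a constant. The key observation is that $z_0 \nmid P$: writing $p = \sum_{|I|\le d} a_I x^I$ with $d = \deg p$, the homogenization is $P = \sum_I a_I z_0^{d-|I|} z_1^{i_1}\cdots z_n^{i_n}$, and the terms with $|I|=d$ (which exist because $d = \deg p$) contribute monomials free of $z_0$. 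Hence $z_0$ divides neither $F$ nor $G$. Now if $F(1,x)$ were constant, every monomial of $F$ would have to be of the form $b\, z_0^{k}$ for some $k$; since $F$ is homogeneous this forces $F = c\, z_0^{k}$ for a single $k$, and non-constancy of $F$ forces $k \geq 1$, contradicting $z_0 \nmid F$. Therefore $F(1,x)$ and $G(1,x)$ are both non-constant, and $p$ is reducible.

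The two directions are essentially symmetric; the only delicate point is the step in $(\Rightarrow)$ ruling out the degenerate possibility that a factor of $P$ dehomogenizes to a unit, which is where I expect the main bookkeeping to live. This is handled cleanly by the divisibility observation $z_0 \nmid P$, which uses only the definition of $\deg p$.
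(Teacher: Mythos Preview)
The paper states this lemma without proof, so there is no argument to compare against directly; your approach is the standard one and is correct in substance.

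One small wrinkle worth tightening: you phrase reducibility as ``$p=fg$ with $f,g$ non-constant'' and ``$P=FG$ with $F,G$ non-constant,'' but the paper allows $R=\Z$, where a non-unit factor can be a constant such as $2$. Your $(\Rightarrow)$ argument explicitly uses non-constancy of $F$ to force $k\geq 1$ in $F=cz_0^k$, so as written it misses the case $F=c$ a non-unit constant. The repair is immediate: if $F=c$ then $p=c\cdot G(1,x)$ with $G(1,x)$ of degree $\deg p\geq 1$, hence a non-unit, so $p$ is still reducible; the symmetric patch works in the $(\Leftarrow)$ direction. Over the fields $\Q,\overline{\Q},\Real,\Comp$ (which is where the lemma is actually invoked downstream, via Corollary~\ref{cor:ifCriterionThenStronglyIrreducible} and Lemma~\ref{lem:FinalCriterion}) the distinction between ``non-unit'' and ``non-constant'' disappears and your argument goes through verbatim.
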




\begin{lem}\label{lem:strongIrredIFFHomgStrongIrredl}
For a polynomial $p(x_1,\dots,x_n)$ in $R[x_1,\dots,x_n]$ of degree $d$, the following are equivalent.
\begin{itemize}
\item $p(x_1,\dots,x_n)$ is strongly irreducible.
\item $p(x_1^k,\dots,x_n^k)$ is irreducible for all $k > 0$.
\item $P(z_0^k,\dots,z_n^k) = p(z_1^k/z_0^k, \dots, z_n^k/z_0^k)z_0^{kd}$ is irreducible for all $k > 0$
\item $P(z_0,\dots,z_n)$ is strongly irreducible.
\end{itemize}

\end{lem}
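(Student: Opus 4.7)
The plan is to prove the four conditions equivalent by showing the chain $(1)\Leftrightarrow(2)\Leftrightarrow(3)\Leftrightarrow(4)$, making heavy use of the two propositions/lemmas already established just above. The first equivalence $(1)\Leftrightarrow(2)$ is literally the statement of Proposition \ref{prop:irreducibleIFFForAllK}, so no new work is needed there. Likewise, $(3)\Leftrightarrow(4)$ is Proposition \ref{prop:irreducibleIFFForAllK} applied to the homogeneous polynomial $P$ in the variables $z_0,\dots,z_n$: strong irreducibility of $P$ is equivalent to irreducibility of $P(z_0^k,\dots,z_n^k)$ for every $k>0$, which is exactly condition (3).

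The main content is therefore $(2)\Leftrightarrow(3)$. For this I would record the identification of $P(z_0^k,\dots,z_n^k)$ as the homogeneous counterpart of $p(x_1^k,\dots,x_n^k)$ and then invoke Lemma \ref{lem:irredIFFHmgIrred}. Concretely, the polynomial $q(x_1,\dots,x_n):=p(x_1^k,\dots,x_n^k)$ has degree $kd$, so its homogeneous counterpart is
\[
Q(z_0,\dots,z_n) \;=\; z_0^{kd}\, q(z_1/z_0,\dots,z_n/z_0) \;=\; z_0^{kd}\, p\!\left(z_1^k/z_0^k,\dots,z_n^k/z_0^k\right).
\]
On the other hand, since $P(z_0,\dots,z_n)=z_0^{d}p(z_1/z_0,\dots,z_n/z_0)$, substituting $z_i\mapsto z_i^k$ gives
\[
P(z_0^k,\dots,z_n^k) \;=\; z_0^{kd}\, p\!\left(z_1^k/z_0^k,\dots,z_n^k/z_0^k\right),
\]
which matches $Q$ on the nose. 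By Lemma \ref{lem:irredIFFHmgIrred}, $q$ is irreducible in $R[x_1,\dots,x_n]$ if and only if $Q=P(z_0^k,\dots,z_n^k)$ is irreducible in $R[z_0,\dots,z_n]$; running this across all $k>0$ yields $(2)\Leftrightarrow(3)$.

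There is no real obstacle here: the argument is essentially bookkeeping, with the one point of care being that the homogenization of $p(x_1^k,\dots,x_n^k)$ uses the correct degree $kd$ (not $d$) so that the power of $z_0$ on the two sides of the identification agree. Once that computation is in hand the four equivalences are immediate from the previously established results, and in particular no new reducibility test needs to be run.
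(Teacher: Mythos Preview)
Your proof is correct and follows exactly the same route as the paper: the paper's proof invokes Proposition~\ref{prop:irreducibleIFFForAllK} for the first and third equivalences and Lemma~\ref{lem:irredIFFHmgIrred} for the second, just as you do. Your write-up is actually more careful, since you explicitly verify that $P(z_0^k,\dots,z_n^k)$ is the homogeneous counterpart of $p(x_1^k,\dots,x_n^k)$ (with degree $kd$), a step the paper leaves implicit.
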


\begin{proof} The first equivalence follows from Proposition \ref{prop:irreducibleIFFForAllK}.  The second equivalence follows from Lemma \ref{lem:irredIFFHmgIrred}.  The third equivalence follows from Proposition \ref{prop:irreducibleIFFForAllK}\end{proof}

\begin{lem}\label{lem:strongIrredThenStrongCoprimeToLessVar}
Suppose that $p(x_0,\dots,x_n) \in R[x_0,\dots,x_n]$ is strongly irreducible and the degree of $p$ in $x_i$ is not zero for all $i$.  Then $p$ is strongly coprime to any polynomial in $R[x_0,\dots,\hat{x_i},\dots,x_n]$.
\end{lem}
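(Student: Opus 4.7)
The plan is to argue by contradiction. Suppose $p$ is not strongly coprime to some polynomial $q \in R[x_0,\ldots,\hat{x_i},\ldots,x_n]$, and fix a finitely generated free abelian group $A$ together with linearly independent sets $\{a_0,\ldots,a_n\}$ and $\{b_j\}_{j\neq i}$ in $A$ such that $p(a_0,\ldots,a_n)$ and $q(b_0,\ldots,\hat{b_i},\ldots,b_n)$ share a common non-unit factor in $R[A]$. Since $p$ is strongly irreducible and $\{a_0,\ldots,a_n\}$ is linearly independent, $p(a_0,\ldots,a_n)$ is irreducible in $R[A]$. The ring $R[A]$ is a Laurent polynomial ring over the UFD $R$ and is itself a UFD, so the common non-unit factor is an associate of $p(a)$; hence $p(a)\mid q(b)$ in $R[A]$, and I may write $p(a)\cdot h = q(b)$.

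Next I would exploit the Newton polytope identity $N(fg)=N(f)+N(g)$, valid for nonzero Laurent polynomials over an integral domain. Let $\widetilde{B}$ denote the pure closure of $\langle b_j\rangle_{j\neq i}$ in $A$, a free abelian subgroup of rank $n$. Since $q(b)\in R[\widetilde{B}]$, its Newton polytope lies inside $\widetilde{B}\otimes \Real$, which forces $N(p(a))$ to lie in a translate of $\widetilde{B}\otimes \Real$. Translating back to the exponent lattice, we get $\sum_j(\alpha_j-\beta_j)a_j \in \widetilde{B}$ for every pair $\alpha,\beta\in\operatorname{supp}(p)$. Because $p$ is irreducible with positive degree in every variable, $x_i$ cannot divide $p$ nor can $p$ be $x_i$-free, so there exist $\alpha,\beta\in\operatorname{supp}(p)$ with $\alpha_i\neq\beta_i$. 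This yields a nontrivial $\Z$-linear relation on the images $\bar a_j\in A/\widetilde{B}$ whose $i$-th coefficient is nonzero.

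The final step is to rule this out. I would apply the induced ring map $\pi_*\colon R[A]\to R[A/\widetilde{B}]$ (which sends each $b_j\mapsto 1$) to $p(a)\cdot h=q(b)$, obtaining $\pi_*(p(a))\cdot \pi_*(h)=q(1,\ldots,1)\in R$. Assuming $q(1,\ldots,1)\neq 0$, the only elements of the Laurent polynomial ring $R[A/\widetilde{B}]$ that can divide a nonzero constant are constants times monomials, so $p(\bar a_0,\ldots,\bar a_n)$ would have to be a constant times a single monomial. Combined with the structural fact established in the previous paragraph (that $\operatorname{supp}(p)$ spreads across at least two distinct values of $\alpha_i$), this forces collapse among the $\bar a_j$ that ultimately makes $p$ itself reducible under an appropriate substitution, contradicting strong irreducibility.

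I expect two technical obstacles. The first is the edge case $q(1,\ldots,1)=0$, which I would dispose of by replacing the quotient with a generic substitution $b_j\mapsto y^{d_j}$ for integers $d_j$ chosen so that $q(y^{d_1},\ldots,y^{d_n})\neq 0$, and then running the same divisibility analysis in $R[A/N]$ for an appropriate subgroup $N\subset\widetilde{B}$. The second and harder obstacle is the case where the images $\bar a_0,\ldots,\bar a_n$ are $\Q$-linearly dependent in $A/\widetilde{B}$ (equivalently, $a_i$ lies in the $\Q$-span of $\{a_j\}_{j\neq i}\cup\{b_j\}_{j\neq i}$); here the crude quotient argument no longer immediately shows that $\pi_*(p(a))$ is a non-unit. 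To handle this, I would first reduce to the pure subgroup of $A$ generated by all the $a_j$'s and $b_j$'s (using that divisibility among elements of $R[A_1]\subseteq R[A]$ with $A_1$ pure is equivalent in both rings), use the direct sum decomposition $A=\widetilde{B}\oplus C$, and apply strong irreducibility (not merely irreducibility) to $p$ evaluated at the images of $\{a_j\}$ in a carefully chosen complementary projection, exploiting the full strength of the hypothesis that $p(x_0^{t_0},\ldots,x_n^{t_n})$ is irreducible for every positive tuple $(t_0,\ldots,t_n)$.
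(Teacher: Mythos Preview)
Your overall strategy coincides with the paper's: reduce to $p(a)\mid q(b)$ in $R[A]$, choose coordinates so that the $b_j$ lie in a corank-one pure subgroup $\widetilde B$ with complement generated by $x_0$, and derive a contradiction from the fact that $q(b)$ has no $x_0$-content while the $n+1$ independent vectors $a_j$ cannot all lie in $\widetilde B$. The paper simply asserts that some $a_i$ having nonzero $x_0$-exponent already contradicts $\deg_{x_0} q(b)=0$; your Newton-polytope and quotient-map reformulation is the same idea, stated more carefully.

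The ``second technical obstacle'' you flag is genuine and cannot be repaired, because the lemma as stated is \emph{false}. Take $p=x_0+x_1+x_2$. By Corollary~\ref{cor:ifCriterionThenStronglyIrreducible} (the system $x_i\,\partial_i p = x_i=0$ has only the trivial solution) $p$ is strongly irreducible, and it has degree one in each variable. But $p$ is not strongly coprime to $q(y_1,y_2)=1+y_1+y_2$: in $A=\Z^3$ with multiplicative generators $x_0,x_1,x_2$, set $a_0=x_0x_1$, $a_1=x_0x_2$, $a_2=x_0$ (a linearly independent set) and $b_1=x_1$, $b_2=x_2$; then
\[
p(a_0,a_1,a_2)=x_0x_1+x_0x_2+x_0=x_0\,(1+x_1+x_2)=x_0\,q(b_1,b_2),
\]
so $1+x_1+x_2$ is a common non-unit factor. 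In your language this is exactly the degenerate case $\bar a_0=\bar a_1=\bar a_2$ in $A/\widetilde B$; the paper's proof overlooks it because it treats $x_0$-degree as though $x_0$ were not a unit in the Laurent ring $R[A]$. Your proposed fix via ``a carefully chosen complementary projection'' and strong irreducibility therefore cannot succeed in general.

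What survives, and suffices for the applications in Section~\ref{sec:linkPoly}, is the lemma with the additional hypothesis that the Newton polytope of $p$ is full-dimensional (equivalently, the support of $p$ affinely spans $\Z^{n+1}$). This holds for the polynomials of Proposition~\ref{ex:infFam}, which have a nonzero constant term together with a nonzero $x_i$-term for every $i$. Under that hypothesis your argument finishes in one line: $p(a)\mid q(b)$ forces $N(p(a))$ into a translate of $\widetilde B\otimes\Real$, hence the support of $p$ into a proper affine hyperplane, a contradiction. No quotient maps, no edge cases, and no further use of strong irreducibility beyond the irreducibility of $p(a)$ are needed.
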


\begin{proof} To get a contradiction and without loss of generality let $g$ be an element of $R[x_1,\dots,x_n]$ and suppose that $g,p$ are not strongly coprime.  For the free abelian group $A = \Z^{n+1}$ there exists linearly independent sets $\{a_i\}$ for $p$ and $\{b_i\}$ for $g$ such that $p(a_0,\dots,a_n)$ and $g(b_1,\dots,b_n)$ have a common factor.  We can identify $A$ with the commutative multiplicative group generated by $\{x_i\}$ in a way that $b_i = x_i^{t_i}$.  We do not have any control over $a_i$.  More precisely $a_i = x_0^{t_{0,i}}\cdots x_n^{t_{n,i}}$.  Note that the degree of $g$ in $x_0$ is $0$ and since $p$ is strongly irreducible $g(x_1^{s_1},\dots,x_n^{s_n}) = p(a_0,\dots,a_n)f(x_0,\dots,x_n)$.  Also the degree of $g(x_1^{s_1},\dots,x_n^{s_n})$ in $x_0$ is the sum of the degrees of $p(a_0,\dots,a_n)$ and $f(x_0,\dots,x_n)$ in $x_0$.  Observe that one $a_i$ must have positive degree in $x_0$ because if all $a_i$ had degree $0$ in $x_0$ it would follow that there are $n+1$ linearly independent vectors in $\Z^n$ which is impossible.  The fact that at least one $a_i$ has positive degree in $x_0$ is a contradiction to the fact that $g(x_1^{s_1},\dots,x_n^{s_n})$ has degree $0$ in $x_0$ and therefore $g$ and $p$ are strongly coprime\end{proof}

One should view Lemma \ref{lem:strongIrredThenStrongCoprimeToLessVar} as stating that any polynomial which is strongly irreducible is strongly coprime to all polynomials in fewer variables.  

For later results we want to be able to easily compute a sufficient condition for irreducibility.  The following is a sufficient condition for irreducibility.  We work over an algebraically closed field in order to apply some known results of algebraic geometry.

\begin{defn}\label{defn:smooth}
Let $P \in R[x_0,\dots,x_n]$ be a homogeneous polynomial, where $R$ is an algebraically closed field.  We say that $P$ is {\em smooth} if the system $$\frac{\partial P(x_0,\dots,x_n)}{\partial x_i} = 0\mbox{, }i = 0,\dots,n$$ has only the trivial solution.
\end{defn}

Proposition \ref{prop:smooth} is a standard result in algebraic geometry.  See \citep{SHA13} Chapter 2 for a discussion on the subject.

\begin{prop}\label{prop:smooth}
Let $P$ be a homogeneous polynomial in $n+1$ variables, $n \geq 2$, over an algebraically closed field $R$.  If $P$ is smooth then $P$ is irreducible over $R$ and hence over any subring containing all the coefficients of $P$.
\end{prop}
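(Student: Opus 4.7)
My plan is to argue the contrapositive: if $P$ is reducible, then $P$ is not smooth. Suppose $P = FG$ with $F,G \in R[x_0,\ldots,x_n]$ nonconstant; by Lemma \ref{lem:hmgFactorIntoHmg} both factors are homogeneous, and they have positive degree since $P$ does. The Leibniz rule gives
\[
\frac{\partial P}{\partial x_i} \;=\; \frac{\partial F}{\partial x_i}\, G \,+\, F\, \frac{\partial G}{\partial x_i},
\]
so any nonzero $(a_0,\ldots,a_n)$ at which both $F$ and $G$ vanish is automatically a nontrivial solution of the system $\partial P/\partial x_i = 0$. It therefore suffices to produce such a common zero.

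This is where the hypothesis $n \geq 2$ enters, and it is the only nontrivial step. Passing to projective space $\mathbb{P}^n_R$, the hypersurfaces $V(F)$ and $V(G)$ each have codimension $1$, and the projective dimension theorem guarantees that $V(F) \cap V(G)$ has dimension at least $(n-1)+(n-1)-n = n-2 \geq 0$, hence is nonempty (one could alternatively cite B\'ezout). Any affine representative of a point in this intersection supplies the required nontrivial critical point of $P$, contradicting smoothness. I expect this algebro-geometric input to be the main obstacle, and it is sharp: over $\mathbb{P}^1$ the polynomial $x_0 x_1$ is smooth yet reducible, so $n \geq 2$ cannot be weakened.

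The final clause of the proposition, concerning subrings, is essentially formal: any nontrivial factorization of $P$ over a subring $S \subseteq R$ is \emph{a fortiori} a nontrivial factorization over $R$, so irreducibility over $R$ forces irreducibility over $S$ as soon as the coefficients of $P$ lie in $S$. I would cite \citep{SHA13} for the projective intersection input and leave the rest to the short bookkeeping above.
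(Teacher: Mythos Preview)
Your proposal is correct and follows essentially the same route as the paper's own proof: contrapositive, Leibniz rule for $\partial P/\partial x_i$, and the projective dimension theorem to force $Z(F)\cap Z(G)\neq\emptyset$ when $n\ge 2$. The paper cites \citep{hartshorneAlgGeometry} rather than \citep{SHA13} for the intersection input, and does not spell out the subring clause or the sharpness example $x_0x_1$, but the core argument is identical.
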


\begin{proof} We prove the contrapositive.  Assume that $P$ is reducible.  By Lemma ~\ref{lem:hmgFactorIntoHmg}, $P = F G$ for some homogeneous polynomials $F$ and $G$  which are not units.  Examining the partial derivatives, we see that $$\frac{\partial P}{\partial x_i} = \frac{\partial F}{\partial x_i}   G + \frac{\partial G}{\partial x_i} F.$$   Next we consider the algebraic sets $Z(F),Z(G) \subset \P^n$, where $Z(F)$ denotes the zero locus of the polynomial.  These are hypersurfaces in $\P^n$, of dimension $n-1.$  Since $n \geq 2$ we have that $n-1 \geq 1$, therefore these hypersurfaces must intersect by the projective dimension theorem ~\citep{hartshorneAlgGeometry}.  Since $Z(F) \cap Z(G) \neq \emptyset$ there exists a point $q = [y_0,\dots,y_n]$ where $y_i \neq 0 $ for some $i$ such that $F(q) = G(q) = 0$.  Thus $q$ is a non trivial solution to $$\frac{\partial P(x_0,\dots,x_n)}{\partial x_i} = 0$$ as desired\end{proof}

Note that if we combine Propositions ~\ref{prop:irreducibleIFFForAllK} and ~\ref{prop:smooth} we get a criterion for strong irreducibility.  Namely, if $p(x_1^k,\dots,x_n^k)$ is smooth for all $k > 0$ then $p$ is strongly irreducible.  But we can get a simpler condition that captures this criterion, as follows.

Since we have a characterization of what being strongly irreducible we can combine Proposition \ref{prop:smooth} and Lemma \ref{lem:strongIrredIFFHomgStrongIrredl} to get a condition.  This condition is summarized in Proposition \ref{prop:criterion}.

\begin{prop}\label{prop:criterion}
Let $P$ be a homogeneous polynomial over an algebraically closed field.  Then $P(x_0^k,\dots,x_n^k)$ is smooth for all $k > 0$ if and only if the system $$x_i \frac{\partial P}{\partial x_i} = 0$$  has only the trivial solution.
\end{prop}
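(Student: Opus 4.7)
The plan is to reduce both implications to a direct computation involving the chain rule on $Q_k(x_0,\dots,x_n) := P(x_0^k,\dots,x_n^k)$ and then exploit the fact that we are working over an algebraically closed field so that $k$-th roots always exist. The key identity is
\[
\frac{\partial Q_k}{\partial x_i}(x_0,\dots,x_n) \;=\; k\, x_i^{k-1}\, \frac{\partial P}{\partial x_i}(x_0^k,\dots,x_n^k),
\]
which translates smoothness of $Q_k$ at a point $(b_0,\dots,b_n)$ into a condition on the partial derivatives of $P$ at $(b_0^k,\dots,b_n^k)$. Both directions will be proved via the contrapositive.

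For the backward direction, I would assume $Q_k$ is not smooth for some $k>0$, so there is a nontrivial $(b_0,\dots,b_n)$ with $k b_i^{k-1}\,\partial P/\partial x_i(b_0^k,\dots,b_n^k)=0$ for all $i$. Multiplying the $i$-th equation by $b_i$ and setting $a_i := b_i^k$ yields $a_i\,\partial P/\partial x_i(a_0,\dots,a_n)=0$, and the tuple $(a_0,\dots,a_n)$ is nontrivial because some $b_i\neq 0$ forces $a_i\neq 0$. This produces the required nontrivial solution of the system $x_i\,\partial P/\partial x_i=0$.

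For the forward direction, I would assume there is a nontrivial solution $(a_0,\dots,a_n)$ of $x_i\,\partial P/\partial x_i=0$ and produce a $k$ for which $Q_k$ fails to be smooth. Using algebraic closure, choose $b_i$ with $b_i^k=a_i$ for some fixed $k\geq 2$. At the point $(b_0,\dots,b_n)$ the partial derivative $\partial Q_k/\partial x_i$ vanishes in both possible cases: if $a_i=0$, then $b_i=0$ and the factor $b_i^{k-1}$ vanishes (this is where $k\geq 2$ is used); if $a_i\neq 0$, then the relation $a_i\,\partial P/\partial x_i(a)=0$ forces $\partial P/\partial x_i(a)=0$. Since $(b_0,\dots,b_n)$ is nontrivial whenever $(a_0,\dots,a_n)$ is, this witnesses that $Q_k$ is not smooth. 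The case $k=1$ of the forward direction needs a brief separate remark: a nontrivial zero of all $\partial P/\partial x_i$ is automatically a nontrivial zero of all $x_i\,\partial P/\partial x_i$, so smoothness of $Q_1=P$ is already implied by the hypothesis.

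The only subtle point is the need to extract $k$-th roots, and this is precisely where the algebraic closure hypothesis enters; everything else is elementary manipulation of partial derivatives, so I do not expect any genuine obstacle beyond keeping track of the vanishing cases and the role of $k\geq 2$ versus $k=1$.
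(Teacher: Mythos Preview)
Your argument is correct and is essentially the paper's own proof: both hinge on the chain rule identity $\partial Q_k/\partial x_i = k\, x_i^{k-1}\,(\partial P/\partial x_i)(x_0^k,\dots,x_n^k)$, multiply through by $x_i$, and pass between $x$-solutions and $y$-solutions via $y_i = x_i^k$ (surjective because the field is algebraically closed); the only cosmetic difference is that you run both implications as contrapositives while the paper argues one of them directly. Your closing remark about $k=1$ is superfluous and slightly garbled---your backward-direction argument already covers $k=1$ without change, and the forward contrapositive only needs to exhibit \emph{some} $k$---but this does not affect correctness.
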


\begin{proof} Assuming $P(x_0^k,\dots,x_n^k)$ is smooth for all $k > 0$.  Then the equation $$\frac{\partial (P(x_0^k,\dots,x_n^k)) }{\partial x_i} = 0$$  has only the trivial solution.  By the chain rule $$\frac{\partial (P(x_0^k,\dots,x_n^k)) }{\partial x_i} = k x_i^{k-1} \frac{\partial P }{\partial x_i}\mid_{ (x_0^k,\dots,x_n^k)}.$$  On inspection we see that for $k  > 1 $ the following two systems have the same solution sets, $$x_i^{k-1} \frac{\partial P }{\partial x_i}\mid_{ (x_0^k,\dots,x_n^k)} = 0$$ and $$x_i^{k} \frac{\partial P }{\partial x_i} \mid_{ (x_0^k,\dots,x_n^k)} = 0.$$  Thus the latter system  has only the trivial solution.  Substituting $y_i = x_i^k$  we see $$y_i   \frac{\partial P}{\partial x_i}\mid_{(y_0,\dots,y_n)} = 0$$  has only the trivial solution.  The proof of this direction is completed by observing that $$\frac{\partial P}{\partial x_i}\mid_{(y_0,\dots,y_n)} = \frac{\partial P(y_0,\dots,y_n)}{\partial y_i}.$$

To prove the other direction we show the contrapositive.  Assume that $P(x_0^k,\dots,x_n^k)$ is not smooth for some $k > 0$.  Therefore  the system $$x_i^{k-1} \frac{\partial P }{\partial x_i} \mid_{ (x_0^k,\dots,x_n^k)} = 0$$ has a nontrivial solution.  Multiplying each equation by the appropriate $x_i$ we get that the system  $$x_i^{k} \frac{\partial P }{\partial x_i}\mid_{ (x_0^k,\dots,x_n^k)}= 0$$ also has a nontrivial solution.  Substituting $y_i = x_i^k $ we get that the system $$y_i   \frac{\partial P}{\partial x_i}\mid_{(y_0,\dots,y_n)}
= 0,$$  has a nontrivial solution.  Observing that $$\frac{\partial P}{\partial x_i}\mid_{(y_0,\dots,y_n)} = \frac{\partial P}{\partial y_i}$$ completes the proof\end{proof}

\begin{cor}\label{cor:ifCriterionThenStronglyIrreducible}
Let $P$ be a homogeneous polynomial in at least 3 variables over an algebraically closed field.  If the system $$x_i \frac{\partial P}{\partial x_i} = 0$$  has only the trivial solution then $P$ is strongly irreducible.
\end{cor}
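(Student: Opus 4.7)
The plan is to assemble this as a direct corollary by chaining three results already established in the section: Proposition \ref{prop:criterion}, Proposition \ref{prop:smooth}, and Lemma \ref{lem:strongIrredIFFHomgStrongIrredl}. Concretely, I would start from the hypothesis that the system $x_i \partial P/\partial x_i = 0$ has only the trivial solution, and trace it through these three equivalences/implications to conclude strong irreducibility.

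First, I would invoke Proposition \ref{prop:criterion}, whose hypothesis matches ours exactly, to deduce that $P(x_0^k, \dots, x_n^k)$ is smooth for every $k > 0$. Here the fact that $P$ is homogeneous and that we work over an algebraically closed field carries over verbatim to $P(x_0^k,\dots,x_n^k)$, since raising variables to a positive power preserves both homogeneity and the coefficient ring. Next, since $P$ has at least $3$ variables, so does $P(x_0^k,\dots,x_n^k)$; in the notation of Proposition \ref{prop:smooth} this means $n \geq 2$, and that proposition then applies to give that $P(x_0^k,\dots,x_n^k)$ is irreducible for every $k > 0$.

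Finally, I would apply Lemma \ref{lem:strongIrredIFFHomgStrongIrredl} (specifically the equivalence between the third and fourth bullets), which says exactly that irreducibility of $P(z_0^k,\dots,z_n^k)$ for all $k > 0$ is equivalent to $P$ being strongly irreducible as a homogeneous polynomial. This yields the desired conclusion.

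There is no real obstacle here beyond bookkeeping: the only thing to double-check is that the dimension hypothesis ($n \geq 2$) needed for Proposition \ref{prop:smooth} is preserved under the substitution $x_i \mapsto x_i^k$, which it trivially is, and that the hypothesis in Proposition \ref{prop:criterion} is stated for the original $P$ and not for each $P(x_0^k,\dots,x_n^k)$ separately, which is exactly how Proposition \ref{prop:criterion} is formulated. The proof therefore reduces to a one-line citation chain, and I would present it as such.
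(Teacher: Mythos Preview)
Your proposal is correct and follows essentially the same three-step chain as the paper's proof: Proposition~\ref{prop:criterion} gives smoothness of $P(x_0^k,\dots,x_n^k)$ for all $k>0$, Proposition~\ref{prop:smooth} upgrades this to irreducibility, and then one concludes strong irreducibility. The only cosmetic difference is that the paper cites Proposition~\ref{prop:irreducibleIFFForAllK} directly for the last step, whereas you route through Lemma~\ref{lem:strongIrredIFFHomgStrongIrredl}; since that lemma's proof of the relevant equivalence is itself just an invocation of Proposition~\ref{prop:irreducibleIFFForAllK}, the two arguments are the same.
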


\begin{proof} By Proposition ~\ref{prop:criterion}, $P(x_1^k,\dots,x_n^k)$ is smooth for all $k > 0$.  Then by Proposition ~\ref{prop:smooth}, $P(x_1^k,\dots,x_n^k)$ is irreducible for all $k > 0$.  Applying Proposition ~\ref{prop:irreducibleIFFForAllK} we get that $P$ is strongly irreducible\end{proof}

Corollary ~\ref{cor:ifCriterionThenStronglyIrreducible} is our most useful tool.  The way to apply it to a nonhomogeneous polynomial $p$ over $\Z$ or $\Q$ is to consider it as a polynomial over $\Comp$ and then homogenize $p$.  If the homogeneous counterpart $P$ is strongly irreducible, then $p$ is strongly irreducible by Lemma ~\ref{lem:irredIFFHmgIrred}.

More formally, we have the following lemma

\begin{lem}\label{lem:completeToRing2}
Let $R$ be an integral domain, let $F$ be the completion of its field of fractions, and let $p$ be in $R[x_0,\dots,x_n]$.  If $p$ is strongly irreducible over $F$ then $p$ is strongly irreducible over $R$.
\end{lem}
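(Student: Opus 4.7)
The plan is to argue the contrapositive, using Proposition~\ref{prop:irreducibleIFFForAllK} on both sides to reduce strong irreducibility to the simpler statement that $p(x_0^k,\dots,x_n^k)$ is irreducible for every positive integer $k$. This cuts the problem down to a single comparison: if $p(x_0^k,\dots,x_n^k)=fg$ is a nontrivial factorization in $R[x_0,\dots,x_n]$, does it yield a nontrivial factorization in $F[x_0,\dots,x_n]$?

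Assuming $p$ is not strongly irreducible over $R$, I would pick $k>0$ witnessing a nontrivial factorization $p(x_0^k,\dots,x_n^k)=fg$ in $R[x_0,\dots,x_n]$ with neither $f$ nor $g$ a unit. The inclusion $R\hookrightarrow F$ extends to an inclusion of polynomial rings, so the same equation holds in $F[x_0,\dots,x_n]$. Because $F$ is a field, the units of $F[x_0,\dots,x_n]$ are exactly $F^{*}$, so any polynomial factor of positive total degree is automatically a non-unit over $F$. In the generic case where both $f$ and $g$ are non-constant, the factorization remains nontrivial after base change to $F$, and a second application of Proposition~\ref{prop:irreducibleIFFForAllK}, now over $F$, shows that $p$ is not strongly irreducible over $F$, completing the contrapositive.

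The main obstacle is the degenerate case where one of the factors, say $f$, is a constant $c\in R\setminus R^{*}$ (for instance $c=2\in\Z$, which becomes a unit in $\Q$ and hence in $F$); here the $R$-factorization collapses to an associate relation over $F$ and does not by itself detect reducibility. I would resolve this by first reducing to the case that $p$ is primitive in $R[x_0,\dots,x_n]$, i.e., the gcd of its coefficients is a unit in $R$. The point of passing to the completion of the field of fractions is precisely that $p$ and its primitive part $\tilde p$ differ only by a constant from $R$, which becomes a unit in $F$; thus $p$ and $\tilde p$ are associates in $F[x_0,\dots,x_n]$ and share the property of being strongly irreducible over $F$. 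For a primitive polynomial in $R[x_0,\dots,x_n]$, Gauss's lemma forbids a constant non-unit factor in any nontrivial factorization, so every such factorization has non-constant pieces, which places us back in the case already handled.

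I expect the routine part of the write-up to be the two invocations of Proposition~\ref{prop:irreducibleIFFForAllK} bookending the argument, while the careful part is the Gauss's lemma reduction that rules out the pathological constant-factor case; once that reduction is in place the conclusion is immediate from the fact that $R[x_0,\dots,x_n]\hookrightarrow F[x_0,\dots,x_n]$ and that non-constant polynomials cannot be units in $F[x_0,\dots,x_n]$.
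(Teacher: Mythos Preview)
Your argument follows the same skeleton as the paper's: both reduce strong irreducibility to the per-$k$ statement via Proposition~\ref{prop:irreducibleIFFForAllK}, and both rest on the implication ``reducible over $R$ $\Rightarrow$ reducible over $F$'' (equivalently, the paper's direct form ``irreducible over $F$ $\Rightarrow$ irreducible over $R$''). The paper simply asserts this implication with the phrase ``because $R$ is a subring of $F$'' and moves on; you correctly notice that this is not automatic, since a factorization like $2(x+1)$ over $\Z$ collapses over $\Q$. Your Gauss-lemma reduction to primitive polynomials is the right patch, and the observation that $p$ and $p(x_0^k,\dots,x_n^k)$ have the same coefficient set (hence primitivity is preserved under $x_i\mapsto x_i^k$) is exactly what is needed to make it go through.

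One caveat worth flagging in your write-up: invoking content and Gauss's lemma tacitly assumes $R$ is a UFD (or at least a gcd domain), which is stronger than the ``integral domain'' hypothesis in the lemma as stated. This is harmless for the paper's purposes, since it only applies the lemma with $R=\Z$ and the polynomials in question have constant term $1$ (hence are automatically primitive), but strictly speaking either the hypothesis on $R$ should be strengthened or a primitivity hypothesis on $p$ should be added. Your version makes this dependence explicit, which is an improvement over the paper's one-line argument.
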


\begin{proof} Suppose that $p$ is strongly irreducible over $F$.  Then by Proposition \ref{lem:strongIrredIFFHomgStrongIrredl}, for all $k > 0$, $p(x_1^k,\dots,x_n^k)$ is irreducible over $F$.  Therefore $p(x_1^k,\dots,x_n^k)$ is irreducible over $R$ because $R$ is a subring of $F$.  Thus by Proposition \ref{lem:strongIrredIFFHomgStrongIrredl}, $p$ is strongly irreducible over $R$.\end{proof}

\begin{lem}\label{lem:FinalCriterion}
Suppose that $p \in \Z[x_1,\dots,x_n]$, where $n \geq 2$.  Let $P$ denote the homogenization of $p$.  Suppose that the system $$x_i \frac{\partial P}{\partial x_i} = 0$$ only has a trivial solution over $\mathbb{C}$.  Then $p$ is strongly irreducible over $\Z$ and therefore strongly coprime to all polynomials in fewer variables.  
\end{lem}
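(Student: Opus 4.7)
The plan is to chain together the machinery developed earlier in the section. Specifically, I would combine Corollary \ref{cor:ifCriterionThenStronglyIrreducible} (which provides strong irreducibility of the homogenization $P$ over an algebraically closed field), Lemma \ref{lem:strongIrredIFFHomgStrongIrredl} (which transfers strong irreducibility between $p$ and its homogenization $P$), Lemma \ref{lem:completeToRing2} (which descends strong irreducibility from $\mathbb{C}$ down to $\Z$), and Lemma \ref{lem:strongIrredThenStrongCoprimeToLessVar} (which upgrades strong irreducibility to the desired coprimality statement).

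More concretely, I would first observe that since $n \geq 2$, the homogenization $P$ lies in $\Z[x_0,\dots,x_n]$ with at least three variables, so Corollary \ref{cor:ifCriterionThenStronglyIrreducible} is applicable over the algebraically closed field $\mathbb{C}$. The given trivial-solution hypothesis for the system $x_i \, \partial P/\partial x_i = 0$ over $\mathbb{C}$ is exactly the corollary's input, yielding strong irreducibility of $P$ over $\mathbb{C}$. Lemma \ref{lem:strongIrredIFFHomgStrongIrredl} then translates this into strong irreducibility of $p$ itself over $\mathbb{C}$. Finally, taking $R = \Z$ and $F = \mathbb{C}$ in Lemma \ref{lem:completeToRing2} descends strong irreducibility from $\mathbb{C}$ to $\Z$, which is the first conclusion.

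For the coprimality conclusion I would verify the degree hypothesis of Lemma \ref{lem:strongIrredThenStrongCoprimeToLessVar}, namely that $p$ has positive degree in each of its variables. This is actually forced by the main hypothesis itself: if $P$ were independent of some variable $x_j$, then $\partial P/\partial x_j \equiv 0$, and the point with $x_j = 1$ and all other coordinates zero would be a nontrivial solution to the system. So every variable genuinely appears in $P$ (and hence in $p$), and Lemma \ref{lem:strongIrredThenStrongCoprimeToLessVar} then yields strong coprimality of $p$ with every polynomial missing at least one of its variables, which is exactly strong coprimality with every polynomial in fewer variables. The proof is essentially a bookkeeping assembly of the earlier lemmas; the only mildly delicate point is keeping straight which ring each intermediate statement lives over ($\Z[x_1,\dots,x_n]$ versus $\mathbb{C}[x_0,\dots,x_n]$, homogenized or not). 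I do not anticipate any genuine obstacle, since Section \ref{sec:strongIrred} was built precisely to make this chain of implications available.
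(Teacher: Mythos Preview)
Your proposal is correct and follows exactly the same chain of implications as the paper's proof: Corollary~\ref{cor:ifCriterionThenStronglyIrreducible}, then Lemma~\ref{lem:strongIrredIFFHomgStrongIrredl}, then Lemma~\ref{lem:completeToRing2}, then Lemma~\ref{lem:strongIrredThenStrongCoprimeToLessVar}. Your added verification that the degree hypothesis of Lemma~\ref{lem:strongIrredThenStrongCoprimeToLessVar} is forced by the trivial-solution assumption is a detail the paper's proof omits but which is a worthwhile observation.
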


\begin{proof} By Corollary \ref{cor:ifCriterionThenStronglyIrreducible}, $P$ is strongly irreducible over $\Comp$.  By Lemma \ref{lem:strongIrredIFFHomgStrongIrredl}, $p$ is strongly irreducible over $\mathbb{C}$.  By Lemma \ref{lem:completeToRing2}, $p$ is strongly irreducible over $\Z$.  By Lemma \ref{lem:strongIrredThenStrongCoprimeToLessVar}, $p$ is strongly coprime to all polynomials in fewer variables\end{proof}

\subsection{Generic Condition}
The property of being strongly irreducible as an element of $\Comp[x_1,\dots,x_n]$ is a generic condition under the Zariski topology.  We will explain this through an example for polynomials in 2 variables.  First we need a couple of preliminaries.  The first preliminary is that homogeneous polynomials can be used to define a locus of zeroes on projective space.  The second is the identification between the set of homogeneous polynomials of fixed degree and a complex affine space.  We give a quick example of how this is done.  This example is well known to algebraic geometers.  Let $Z(p) \subset \Comp^n$ denote the zero set of a polynomial, where $p \in \Comp[x_1,\dots,x_n]$.

\begin{prop}\label{ex:polyToProj}
The set of homogeneous polynomials in three variables of degree 2 over $\Comp$ can be identified with $\Comp^6$, if we include the 0 polynomial or with $\Comp \mathbb{P}^5$ if we identify polynomials using the equivalence relation $p \sim q$ if $Z(p) = Z(q)$.
\end{prop}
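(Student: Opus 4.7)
The plan is to first exhibit the natural coordinate identification and then verify the equivalence relation on the projective side. First I would write down the monomial basis: every homogeneous polynomial of degree $2$ in three variables $x_1,x_2,x_3$ has a unique expansion
\[
p(x_1,x_2,x_3) = a_{11}x_1^2 + a_{22}x_2^2 + a_{33}x_3^2 + a_{12}x_1x_2 + a_{13}x_1x_3 + a_{23}x_2x_3,
\]
since the monomials of degree $2$ in three variables number $\binom{2+3-1}{2}=6$. The map sending $p$ to the tuple $(a_{11},a_{22},a_{33},a_{12},a_{13},a_{23})$ is a $\Comp$-linear bijection from the space of homogeneous polynomials of degree $2$ (including the zero polynomial) onto $\Comp^6$.

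For the projective identification, I would first note that for any nonzero scalar $\lambda\in\Comp$ and any homogeneous $p$, one has $Z(\lambda p)=Z(p)$, so the zero-set map descends to the projectivization $\Comp\mathbb{P}^5$ of the set of nonzero homogeneous quadratics. What remains is the converse: if $p,q$ are nonzero degree-$2$ homogeneous polynomials in three variables with $Z(p)=Z(q)$ in $\Comp\mathbb{P}^2$, then $p=\lambda q$ for some nonzero $\lambda\in\Comp$. I would prove this by splitting into cases based on the factorization of $p$ into linear forms over $\Comp$. If $p$ is irreducible, then by the projective Nullstellensatz $Z(p)$ determines the radical of the principal ideal $(p)$, which equals $(p)$ itself; comparing degrees forces $q$ to be a scalar multiple of $p$. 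If $p = \ell_1 \ell_2$ splits as a product of two distinct linear forms, then $Z(p)=Z(\ell_1)\cup Z(\ell_2)$ is a union of two distinct projective lines, and $q$ must likewise vanish on both, so $\ell_1$ and $\ell_2$ each divide $q$; again a degree count pins down $q$ up to scalar. If $p = \ell^2$, then $Z(p)=Z(\ell)$ is a single line, and any homogeneous polynomial of degree $2$ vanishing on $Z(\ell)$ is divisible by $\ell$, hence (by the degree count) by $\ell^2$, giving $q=\lambda p$.

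I expect the main obstacle to be the double-line case $p=\ell^2$, since one has to rule out the possibility that a different quadratic $q$ not divisible by $\ell^2$ could cut out the same (reduced) projective set. The key point is that we are looking at zero sets as subsets of $\Comp\mathbb{P}^2$ without scheme structure, and the argument above uses only that $\ell\mid q$ (because $q$ vanishes on $Z(\ell)$ and $\ell$ is irreducible) and then degrees. Once the three cases are dispatched, the induced map from $\Comp\mathbb{P}^5$ to the set of zero loci of nonzero degree-$2$ homogeneous polynomials is a bijection, completing the identification.
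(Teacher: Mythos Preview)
Your affine identification with $\Comp^6$ is exactly what the paper does. For the projective half, the paper's argument is far more cursory than yours: it simply notes that scaling a polynomial does not change its zero locus, and stops. It never verifies the converse implication (that $Z(p)=Z(q)$ forces $q=\lambda p$), treating this as part of a well-known example. Your case analysis by factorization type is therefore not a different route so much as a genuine completion of what the paper leaves unsaid; it buys you an honest proof of the bijection rather than just a well-defined map.

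One small gap to patch: in the double-line case $p=\ell^2$, you assert that $\ell\mid q$ together with $\deg q=2$ gives $\ell^2\mid q$. That does not follow directly---from $\ell\mid q$ and the degree you only get $q=\ell m$ for some linear form $m$. The missing step is that $Z(q)=Z(\ell)\cup Z(m)$ must equal the single line $Z(\ell)$, so $Z(m)\subseteq Z(\ell)$; since both are projective lines this forces $Z(m)=Z(\ell)$ and hence $m=\mu\ell$ for some nonzero $\mu$, giving $q=\mu\ell^2$. With that adjustment your argument is complete.
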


\begin{proof} For first claim, observe that all homogeneous polynomials in $\Comp[x_1,x_2,x_3]$ are of the form $a_{1,1}x_1^2 + a_{1,2}x_1x_2 + a_{1,3}x_1x_3 + a_{2,2}x_2^2 + a_{2,3}x_2x_3 + a_{3,3}x_3^2$.  We can identify each polynomial with the point $(a_{1,1}, a_{1,2}, a_{1,3}, a_{2,2}, a_{2,3}, a_{3,3})$.  Then we get each point in $\Comp^6$ except for $(0,0,0,0,0,0)$ which is identified with the $0$ polynomial.

For the second claim we do not include the $0$ polynomial and observe that scaling does not change the zero-locus\end{proof}

In general we can order the monomials of homogeneous polynomials and then take the coefficients of the monomials to define a point.  For homogeneous polynomials of degree $d$ in $n+1$ variables there are $N = {d+n\choose d}$ monomials.  Therefore we can identify a set of homogeneous polynomials of degree $d$ with a subset of $\Comp ^{N}$.  If we consider them equivalent up to scaling (that is they have the same zero sets) then we can identify them with $\Comp\mathbb{P}^{N-1}$.  

For a given degree $d$ being strongly irreducible is a generic condition.  By a generic condition we mean that the condition defines a nonempty open Zariski subset of $\Comp^{N}$.

\begin{cor}\label{cor:genericIrred}
A generic polynomial $p \in \Comp[x_1,\dots,x_n]$ of degree $d$ is strongly irreducible.
\end{cor}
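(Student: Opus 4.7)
The plan is to invoke Lemma \ref{lem:FinalCriterion}, which gives a clean sufficient condition for strong irreducibility over $\Comp$: the homogenization $P$ of $p$ should satisfy that the system $x_i \frac{\partial P}{\partial x_i} = 0$, $i = 0, \ldots, n$, has only the trivial solution. I will then show that this condition cuts out a nonempty Zariski open subset of the coefficient space $\Comp^N$, where $N = \binom{d+n}{n}$; this is exactly what ``generic condition'' means here.

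First I would parametrize polynomials of degree $d$ in $n$ variables by their monomial coefficients in $\Comp^N$, noting that the locus of polynomials of degree exactly $d$ (rather than strictly less) is itself a nonempty Zariski open subset. Since the homogenization map $p \mapsto P$ is linear in the coefficients, each polynomial $f_i := x_i \frac{\partial P}{\partial x_i}$ is homogeneous of degree $d$ in the $n+1$ variables $x_0, \ldots, x_n$, and its coefficients depend linearly on the coefficients of $p$.

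Next I would appeal to classical elimination theory: for $n+1$ homogeneous polynomials of a common degree in $n+1$ variables, there exists a multivariate resultant $\operatorname{Res}(f_0, \ldots, f_n)$ which is a polynomial in the coefficients of the $f_i$ and which vanishes precisely when the $f_i$ share a nontrivial common zero in $\Comp^{n+1}$; see \citep{SHA13}. Composing this resultant with the linear coefficient map $p \mapsto (\text{coefficients of } f_0, \ldots, f_n)$ yields a single polynomial $\Phi$ on $\Comp^N$ whose nonvanishing locus agrees with the criterion of Lemma \ref{lem:FinalCriterion}. Hence the set $\{\Phi \neq 0\}$ is a Zariski open set of strongly irreducible polynomials.

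Finally, to verify that this open set is nonempty (and therefore Zariski dense), I would exhibit the Fermat-type polynomial $p = 1 + x_1^d + \cdots + x_n^d$, whose homogenization is $P = x_0^d + x_1^d + \cdots + x_n^d$. Then $x_i \frac{\partial P}{\partial x_i} = d x_i^d$, and this system manifestly has only the trivial solution, so $p$ lies in $\{\Phi \neq 0\}$. The only real obstacle is invoking the correct form of the multivariate resultant (for $n+1$ forms of equal degree in $n+1$ variables) and confirming that it pulls back to a polynomial on $\Comp^N$; the Fermat example and the reduction to Lemma \ref{lem:FinalCriterion} are then routine.
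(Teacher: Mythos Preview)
Your argument is correct and is exactly what the paper intends: the paper omits the proof entirely, remarking only that it is ``similar to the proof of generic smoothness for polynomials,'' and your resultant argument together with the Fermat witness is the standard way to carry that out. Two minor remarks: (i) you should cite Corollary~\ref{cor:ifCriterionThenStronglyIrreducible} together with Lemma~\ref{lem:strongIrredIFFHomgStrongIrredl} rather than Lemma~\ref{lem:FinalCriterion}, since the latter is stated specifically for $\Z[x_1,\dots,x_n]$; and (ii) both your argument and Corollary~\ref{cor:ifCriterionThenStronglyIrreducible} require $n \geq 2$ (the homogenization must have at least three variables for Proposition~\ref{prop:smooth} to apply), a hypothesis the paper leaves implicit.
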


The proof is similar to the proof of generic smoothness for polynomials and is therefore left to the reader.


The following is a proposition that gives us some algebraic information about the group ring $\Z[x_1^{\pm1},\dots,x_n^{\pm1}]$ when localized at a specific multiplication set.  We will use this fact in later sections.

\begin{prop}\label{prop:localizedPID}
Let $\Lambda = \Z[x_1^{\pm1},\dots,x_n^{\pm1}]$ and let $p,q \in \Lambda$ such that $p,q$ are irreducible and relatively prime.  Let $S = \{r \in \Lambda \mid (r,pq) = 1\}$.  Then the ring $S^{-1}\Lambda$ is a PID.
\end{prop}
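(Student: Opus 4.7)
The plan is to identify the prime spectrum of $S^{-1}\Lambda$ and then appeal to standard commutative algebra. Since $\Lambda = \Z[x_1^{\pm 1},\dots,x_n^{\pm 1}]$ is a localization of the polynomial ring $\Z[x_1,\dots,x_n]$, it is a Noetherian UFD, and hence so is $S^{-1}\Lambda$. The key observation is that localizing at $S$ kills every nonzero prime ideal except $(p)$ and $(q)$, leaving behind a one-dimensional Noetherian UFD.

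First I would translate the condition $r\in S$ into the statement $r\notin (p)\cup (q)$: since $p$ and $q$ are non-associate irreducibles in the UFD $\Lambda$, coprimality to $pq$ is equivalent to divisibility by neither $p$ nor $q$. Under the standard bijection between primes of $S^{-1}\Lambda$ and primes of $\Lambda$ disjoint from $S$, I then need to classify nonzero primes $\mathfrak{p}\subseteq\Lambda$ contained in $(p)\cup (q)$. A two-ideal prime-avoidance argument (pick $a\in\mathfrak{p}\setminus(p)$ and $b\in\mathfrak{p}\setminus(q)$ and examine $a+b$) forces $\mathfrak{p}\subseteq (p)$ or $\mathfrak{p}\subseteq (q)$. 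Because $p$ and $q$ are irreducible in a UFD, the principal ideals $(p)$ and $(q)$ have height one, so any nonzero prime contained in one of them must equal it. Thus the nonzero primes of $S^{-1}\Lambda$ are exactly $(p)$ and $(q)$, and both are principal.

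To conclude, I would package these facts into the standard chain of implications: $S^{-1}\Lambda$ is a Noetherian UFD of Krull dimension one; since UFDs are integrally closed, this makes $S^{-1}\Lambda$ a Dedekind domain; and a Dedekind domain which is also a UFD is a PID, since every nonzero ideal factors uniquely into prime ideals, each of which is principal by the UFD property. Equivalently, $S^{-1}\Lambda$ is semi-local with exactly the two principal maximal ideals $(p)$ and $(q)$, and any semi-local Dedekind domain is a PID.

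The main obstacle is almost cosmetic: the real work is translating membership in the multiplicative set $S$ to avoidance of $(p)\cup (q)$, and then carefully applying prime avoidance to a prime ideal contained in a union of two principal ideals. Everything else is a sequence of standard results from commutative algebra, and no actual computation is required.
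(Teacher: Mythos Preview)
Your argument is correct. The route you take, however, differs from the paper's primary proof. The paper argues directly: since $S^{-1}\Lambda$ is Noetherian, any ideal $I$ has finitely many generators, each of which is (up to a unit) of the form $p^{s_i}q^{t_i}$; a short case analysis then shows any two such generators can be replaced by one, and induction finishes. Your proof is instead structural: you determine the spectrum of $S^{-1}\Lambda$ via prime avoidance, observe that the only nonzero primes are the height-one principal ideals $(p)$ and $(q)$, and then invoke the standard fact that a one-dimensional Noetherian UFD (equivalently, a semi-local Dedekind domain) is a PID. The paper in fact acknowledges exactly this alternative in the paragraph following its proof, remarking that $S^{-1}\Lambda$ is a Dedekind domain with finitely many primes and hence a PID. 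Your approach has the advantage of requiring no computation and of generalizing transparently to any finite set of pairwise-coprime irreducibles; the paper's approach is more elementary and self-contained, avoiding any appeal to dimension theory or Dedekind domains.
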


\begin{proof} First observe that $\Lambda$ is a Noetherian ring, which implies that $S^{-1}\Lambda$ is Noetherian because the ideals of $S^{-1}\Lambda$ are generated by the ideals of $\Lambda$ up to units.  Let $I$ be an ideal in $S^{-1}\Lambda$, then $I = \langle f_1,\dots,f_n \rangle$, where $f_i = g_i^{-1}h_i$ for some $g_i \in S$.  Also, $g_i$ does not share any factors of $p,q$ because $p,q$ are irreducible, $(g_i,pq) = 1$ and $(p,q) = 1$.  Therefore there exist $s_i,t_i$ such that $s_i,t_i$ are maximal and we can rewrite $f_i = p^{s_i}q^{t_i}g_i^{-1}h'_i$.  Since $s_i,t_i$ are maximal it follows that $(h'_i,pq) = 1$ and therefore $h'_i$ is a unit.  Also since $g_i$ is a unit in $S^{-1}\Lambda$ it follows that up to units $I = \langle p^{s_1}q^{t_1},\dots,p^{s_n}q^{t_n} \rangle$.  Note that if $s_i = t_i = 0 $ for some $i$ then $I = S^{-1} \Lambda$.  So we may take $s_i > 0$ or $t_i > 0$.

We next show that if there are at least two generators of $I$ we can reduce the set of generators by one.  Consider $p^{s_1}q^{t_1},p^{s_2}q^{t_2}$, up to symmetry of $s_i,t_i$ we have two cases. The first case is when $s_1 \leq s_2$ and $ t_1 \leq t_2$ in which case $p^{s_2}q^{t_2} = p^{s_1}q^{t_1}p^{s_2-s_1}q^{t_2-t_1}$.  Therefore we can reduce the number of generators by one since $I$ is an ideal.  

The second case is that $s_1 \leq s_2$ and $t_2 \leq t_1$.  Then consider $p^{s_1}q^{t_1} + p^{s_2}q^{t_2} = p^{s_1}q^{t_2}(q^{t_1-t_2}+p^{s_1-s_2})$.  We focus on $q^{t_1-t_2}+p^{s_1-s_2}$  and observe that $(q^{t_1-t_2}+p^{s_1-s_2},pq) = 1$.  To see this suppose for a contradiction $(q^{t_1-t_2}+p^{s_1-s_2},pq) = f$.  Since $f$ divides $pq$, we have three possibilities for $f$.  By unique factorization $f$ is either $p,q$ or $pq$.  By symmetry we may assume $p$ divides $f$; therefore $p$ divides $q^{t_1-t_2}+p^{s_1-s_2}$ which implies that $p$ divides $q$ which contradicts $(p,q) = 1$.  Since $(q^{t_1-t_2}+p^{s_1-s_2},pq) = 1$ it follows that $q^{t_1-t_2}+p^{s_1-s_2} \in S$ and therefore it is a unit in $S^{-1} \Lambda$ so up to units $p^{s_1}q^{t_2} \in I$ and $p^{s_1}q^{t_1} = p^{s_1}q^{t_2}q^{t_1-t_2}$ and $p^{s_2}q^{t_2} = p^{s_1}q^{t_2}p^{s_2-t_1}$.  Thus the ideal $I$ is generated by the set $\{p^{s_1}q^{t_2}, p^{s_3}q^{t_3}, \dots, p^{s_n}q^{t_n} \}$.  By induction on the number of generators we see that $I$ is principally generated\end{proof}

Proposition \ref{prop:localizedPID} is actually true for any finite set of irreducible $\{p_i\}$ with the property that $(p_i,p_j) = 1$ for any $i,j$ and the proof is exactly the same up to permutations for each case.  It also follows from some algebra facts about Dedekind domains.  One can show that $S^{-1} \Lambda$ is a Dedekind domain with finitely many prime ideals and is therefore a PID.

\section{Links With Good Alexander Polynomials}\label{sec:linkPoly}

In this section we construct an examples of links with specified torsion Alexander modules.  We must construct a slice link for later constructions.  This forces the torsion Alexander polynomial to factor but it factors into the form $p(x_1,\dots,x_n)p(x_1^{-1},\dots,x_n^{-1})$.  These polynomials are not irreducible but having $p(x_1,\dots,x_n)$ be strongly irreducible is sufficient for our applications. 

\begin{prop}\label{ex:infFam}
Any member of the following two families is strongly irreducible $$F'_1 = \{p(x_1,\dots,x_{2n}) \mid p(x_1,\dots,x_{2n}) =1-\sum\limits_{i=1}^{2n} (-1)^{i} k_i x_i,  k_i \neq 0 \mbox{ for all }i \}$$ and $$F'_2 = \{q(x_1,\dots,x_{2n+1}) \mid q(x_1,\dots,x_{2n+1}) = 1 + k_2 x_2 + \sum\limits_{i=1}^{2n+1} (-1)^{i} k_i x_i , \forall k_i \neq 0 \mbox{ for all }i \}.$$ We may take $F_i$ a subset of $F'_i$ where the coefficients of polynomials for $F_1$ are subject to the equation $-k_1 + k_2 -k_3 + k_4 - \dots +k_{2n} = 0$, and the coefficients of polynomials in $F_2$ are subject to the equation $-k_1 +2k_2 -k_3 + k_4 -\dots -k_{2n+1} = 0$, and for each $i$, $k_i$ is not equal to $0$.
\end{prop}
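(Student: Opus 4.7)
The plan is to reduce the entire proposition to a direct application of Lemma \ref{lem:FinalCriterion}. The crucial observation is that every polynomial in $F'_1$ and $F'_2$ is affine-linear (total degree $1$) and has a nonzero coefficient on each variable, which makes the Jacobian criterion of Lemma \ref{lem:FinalCriterion} trivial to verify.

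First, for $p \in F'_1$, I would expand the defining sum to get
$$p = 1 + k_1 x_1 - k_2 x_2 + k_3 x_3 - \cdots - k_{2n} x_{2n},$$
whose homogenization is the linear polynomial
$$P(x_0,\dots,x_{2n}) = x_0 + k_1 x_1 - k_2 x_2 + \cdots - k_{2n} x_{2n}.$$
Since $P$ is linear, each partial $\partial P/\partial x_j$ is a nonzero constant ($1$ for $j=0$ and $\pm k_j$ for $j\ge 1$, using $k_j\neq 0$). The system $x_j\, \partial P/\partial x_j = 0$ therefore collapses to $x_j = 0$ for every $j$, so it has only the trivial solution over $\Comp$. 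Because $F'_1$ contains polynomials in at least two variables, Lemma \ref{lem:FinalCriterion} applies and yields that $p$ is strongly irreducible over $\Z$ and strongly coprime to every polynomial in fewer variables.

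Next, for $q \in F'_2$, a one-line rearrangement gives
$$q = 1 - k_1 x_1 + 2k_2 x_2 - k_3 x_3 + k_4 x_4 - \cdots - k_{2n+1} x_{2n+1},$$
which is again linear with every coefficient nonzero (the coefficient of $x_2$ is $2k_2\neq 0$ since $k_2\neq 0$). Homogenizing and applying the same Jacobian computation as above produces only the trivial solution, so Lemma \ref{lem:FinalCriterion} again concludes that $q$ is strongly irreducible. The final sentence of the proposition, asserting the existence of subfamilies $F_i\subset F'_i$ cut out by one extra linear relation on the $k_i$'s, requires no further argument: any polynomial satisfying that linear relation still lies in $F'_i$ with all $k_i\neq 0$, and strong irreducibility is inherited by subsets.

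I do not anticipate a real obstacle here; the only thing to guard against is overcomplication. Because the polynomials are linear, the Jacobian system in Lemma \ref{lem:FinalCriterion} degenerates to a diagonal linear system whose only solution is the origin, so the proof is essentially a bookkeeping exercise of expanding the defining sums and invoking the already-established machinery of Section \ref{sec:strongIrred}.
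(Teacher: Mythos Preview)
Your proposal is correct and is exactly the approach the paper intends: the paper's own proof consists of the single sentence ``The proof is an application of Lemma~\ref{lem:FinalCriterion} after homogenizing and thus is left to the reader,'' and you have carried out precisely that computation. Your expansions of the defining sums for $F'_1$ and $F'_2$ are accurate, and the observation that linearity forces all partials to be nonzero constants (so the system $x_j\,\partial P/\partial x_j=0$ has only the trivial solution) is the entire content of the argument.
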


The proof is an application of Lemma \ref{lem:FinalCriterion} after homogenizing and thus is left to the reader.

Taking $k_i = 1$ for all $i$ shows that both $F_1$ and $F_2$ from Proposition \ref{ex:infFam} are nonempty.  The equations are there so that if we had a polynomial $p(x_1,\dots,x_n)$ in $F_i$ it follows that $p(1,1,\dots,1) = 1$, which is a condition that we will use to construct ribbon links.  It is easy to see that there are actually infinitely many such polynomials.

\begin{prop}\label{prop:ExistenceOfLink}
For each polynomial $p(x_1,\dots,x_n)$ from Proposition \ref{ex:infFam}, there exists a ribbon link $L$ such that $\Delta_L(x_1,\dots,x_n) = p(x_1,\dots,x_n)p(x_1^{-1},\dots,x_n^{-1})$ and the torsion Alexander module $T\cA$ of $L$ is $\Z[x_1^{\pm1},\dots,x_n^{\pm1}]/\Delta_L$.
\end{prop}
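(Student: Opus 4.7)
The strategy is to build $L$ by an explicit ribbon construction that realizes $p \cdot p^*$ as the torsion Alexander polynomial. The key observation driving the construction is that the hypothesis $p(1,1,\dots,1)=1$ (which is exactly what the side conditions on $\{k_i\}$ in Proposition \ref{ex:infFam} guarantee) is precisely the Fox--Milnor-type normalization needed to realize $p(x)p(x^{-1})$ by a slice link with pairwise linking number zero.

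First I would fix a base trivial link of $n$ unknotted circles $U_1,\dots,U_n$ in $S^3$, bounding a collection of disjoint disks $D_1,\dots,D_n$, and then attach $n-1$ carefully chosen ribbon bands. Each coefficient $k_i$ of $p$ will be recorded geometrically: the $i$-th band will thread through a specified collection of the disks $D_j$ with algebraic count equal to the corresponding coefficient (with signs matching the $(-1)^i$ pattern), so that the meridian $\mu_j$ of $U_j$ acts on the module by multiplication by $x_j$. The resulting 2-complex (disks plus bands) is a ribbon immersion of a disjoint union of disks into $S^3$, so pushing it into $B^4$ yields disjoint embedded ribbon disks and shows $L$ is ribbon; the pairwise linking number zero condition is automatic from the construction since each band passes algebraically zero times through any $D_j$ with $j$ not participating in that band, and the side condition on $\{k_i\}$ arranges cancellation for the participating disks as well.

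Next I would compute the Alexander module. The ribbon disks give a handle decomposition in which the link complement is homotopy equivalent to a 2-complex with $n$ 1-cells (the meridians $\mu_j$) and $n-1$ 2-cells coming from the bands; Fox calculus on this presentation produces an $(n-1)\times n$ Alexander matrix $M$. The $j$-th column is designed to record the band-threading data, so that $M$ has the shape of an ``almost-circulant'' matrix built from the monomials of $p$. Elementary row/column operations over $\Lambda = \Z[x_1^{\pm 1},\dots,x_n^{\pm 1}]$, together with the relation $p(1,\dots,1)=1$ used to clear a single entry, reduce $M$ to a matrix whose $0$-th elementary ideal is generated by the single Laurent polynomial $p(x_1,\dots,x_n)\,p(x_1^{-1},\dots,x_n^{-1})$; the second factor appears because of the symmetry inherent in a ribbon presentation (the presentation and its ``mirror'' both contribute to the determinant).

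Finally, I would read off that the torsion Alexander module is cyclic. Since the reduced matrix has the form $[\,p\,p^*\,]$ after row reduction of the square $(n-1)\times(n-1)$ block of $M$ that already lies in $\Lambda$, the presentation matrix of $T\cA(L)$ becomes $1\times 1$, so $T\cA(L)\cong \Lambda/(p\,p^*)$; combined with the computation of $\widetilde{E}_0$, this yields $\Delta_L = p(x_1,\dots,x_n)\,p(x_1^{-1},\dots,x_n^{-1})$. The main obstacle I expect is the bookkeeping in the reduction of the Alexander matrix: one must choose the band pattern so that all off-diagonal entries can be killed using only the identity $p(1,\dots,1)=1$ and the strong irreducibility of $p$, without introducing spurious factors. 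A secondary subtlety is verifying that the non-torsion part of the Alexander module is concentrated where expected, so that passing to $T\cA$ really does yield the cyclic module $\Lambda/(p\,p^*)$ rather than a larger module with free summands.
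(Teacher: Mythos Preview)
Your construction does not produce the object you need. Attaching $n-1$ ribbon bands to $n$ disjoint trivial disks fuses components: the result is a single knot, not an $n$-component link. Correspondingly, the $2$-complex you describe (one $0$-cell, $n$ $1$-cells, $n{-}1$ $2$-cells) cannot be the homotopy type of the exterior of an $n$-component ribbon link or of its ribbon-disk complement, and the $(n{-}1)\times n$ Alexander matrix you write down has $E_0=0$ automatically, so no row reduction will exhibit $p\,\overline{p}$ as a generator of the $0$th elementary ideal. More seriously, the sentence ``the second factor appears because of the symmetry inherent in a ribbon presentation'' is not an argument: nothing in Fox calculus on the link group alone forces a factorization of the form $p\,\overline{p}$, and $\overline{p}$ will not materialize from elementary operations if it has not been built into the geometry.

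The paper's construction is structurally different and supplies exactly the missing ingredients. One starts with the $n$-component trivial link together with an \emph{extra} dotted circle $h_0$, and attaches a single $0$-framed $2$-handle along a curve $\alpha$ that passes once through each $L_i$ and winds $k_i$ times around $h_0$; since $\alpha$ goes over $h_0$ geometrically once the pair cancels, and the image of $L_1\cup\dots\cup L_n$ in the resulting $B^4$ is the desired ribbon link. In the $\Z^n$-cover of the ribbon-disk exterior $X$, the lift $t$ of a meridian of $h_0$ is the cyclic generator of the torsion, and $\partial\alpha = t\cdot p(e_1,\dots,e_n)$, giving $TH_1(X;\Z[\Z^n])\cong\Lambda/\langle p\rangle$. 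The factor $\overline{p}$ enters not through any vague symmetry but through Poincar\'e duality: $H_2(X,M_L;\Z[\Z^n])\cong \overline{H^2(X;\Z[\Z^n])}=\Lambda/\langle \overline{p}\rangle$, and the long exact sequence of the pair $(X,M_L)$ exhibits $T\cA(L)$ as an extension of $\Lambda/\langle p\rangle$ by $\Lambda/\langle \overline{p}\rangle$. Cyclicity (still witnessed by $t$) together with the coprimality of $p$ and $\overline{p}$ then forces $T\cA(L)=\Lambda/\langle p\,\overline{p}\rangle$. Your outline has neither the extra generator $t$ nor the duality step, and without them the computation cannot be completed.
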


\begin{proof} 

\begin{figure}[h!]
\centering
\begin{subfigure}[b]{0.3\textwidth}
\includegraphics[scale=.25]{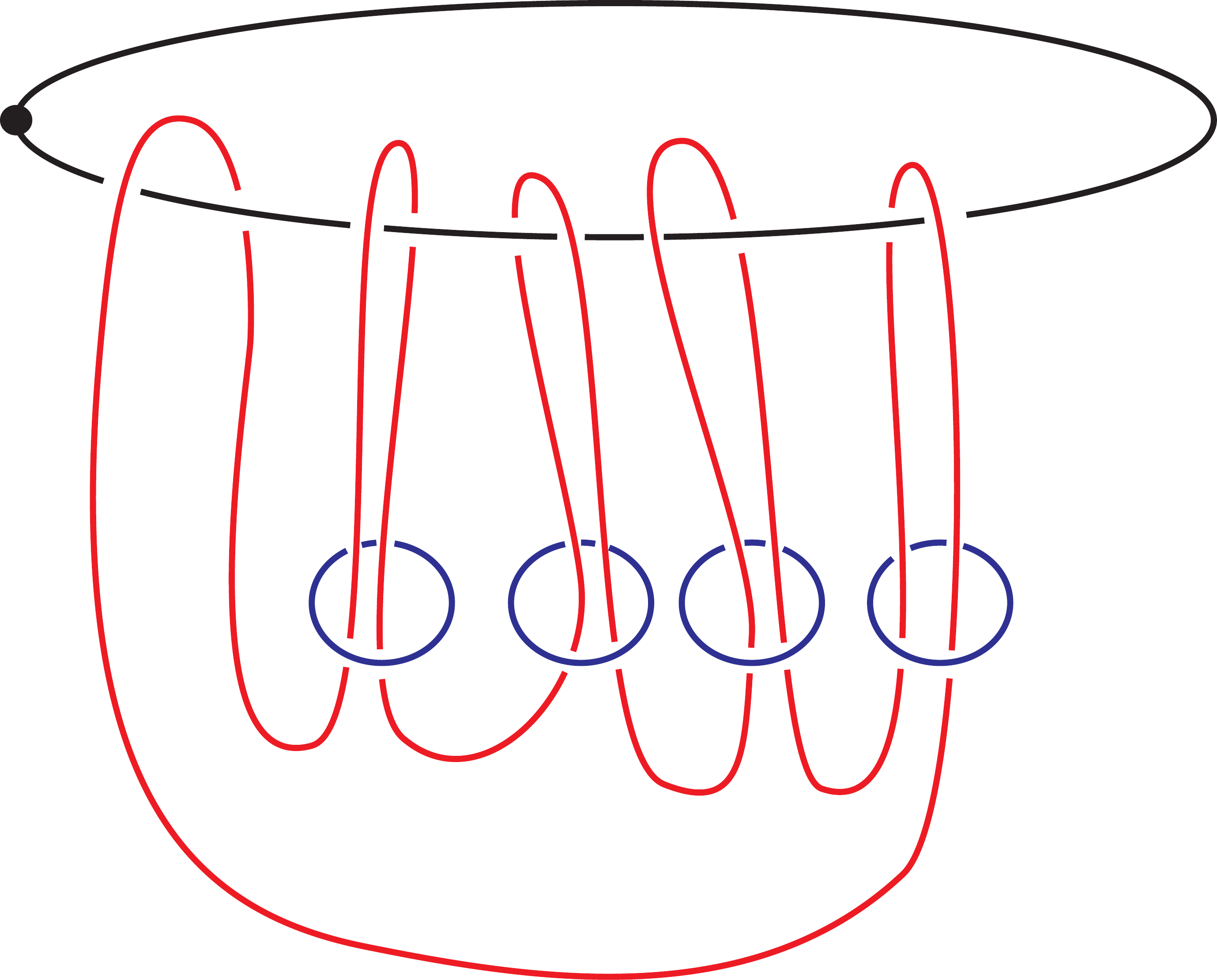}
\put(-220,150){$h_0$}
\put(-157,50){$L_1$}
\put(-125,50){$L_2$}
\put(-95,50){$L_3$}
\put(-65,50){$L_4$}
\put(-190,20){$\alpha$}
\caption{Type 1}
\label{fig:LinkEven}
\end{subfigure} \quad \quad \quad \quad \quad
\begin{subfigure}[b]{0.3\textwidth}
\includegraphics[scale=.25]{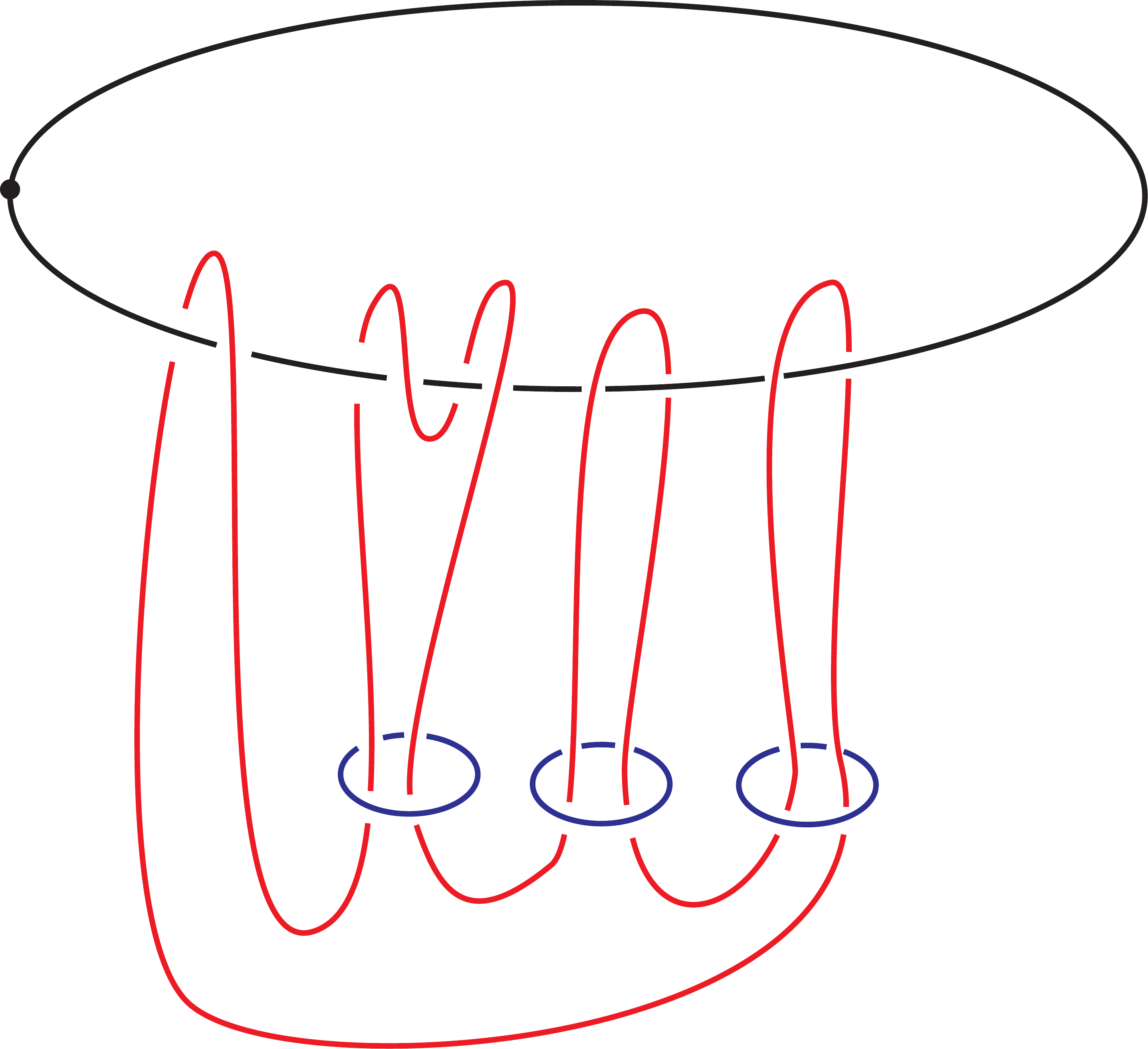}
\put(-200,190){$h_0$}
\put(-159,55){$L_1$}
\put(-123,55){$L_2$}
\put(-85,55){$L_3$}
\put(-200,30){$\alpha$}
\caption{Type 2}
\label{fig:LinkOdd}
\end{subfigure}
\begin{subfigure}[b]{0.3\textwidth}
\includegraphics[scale=.25]{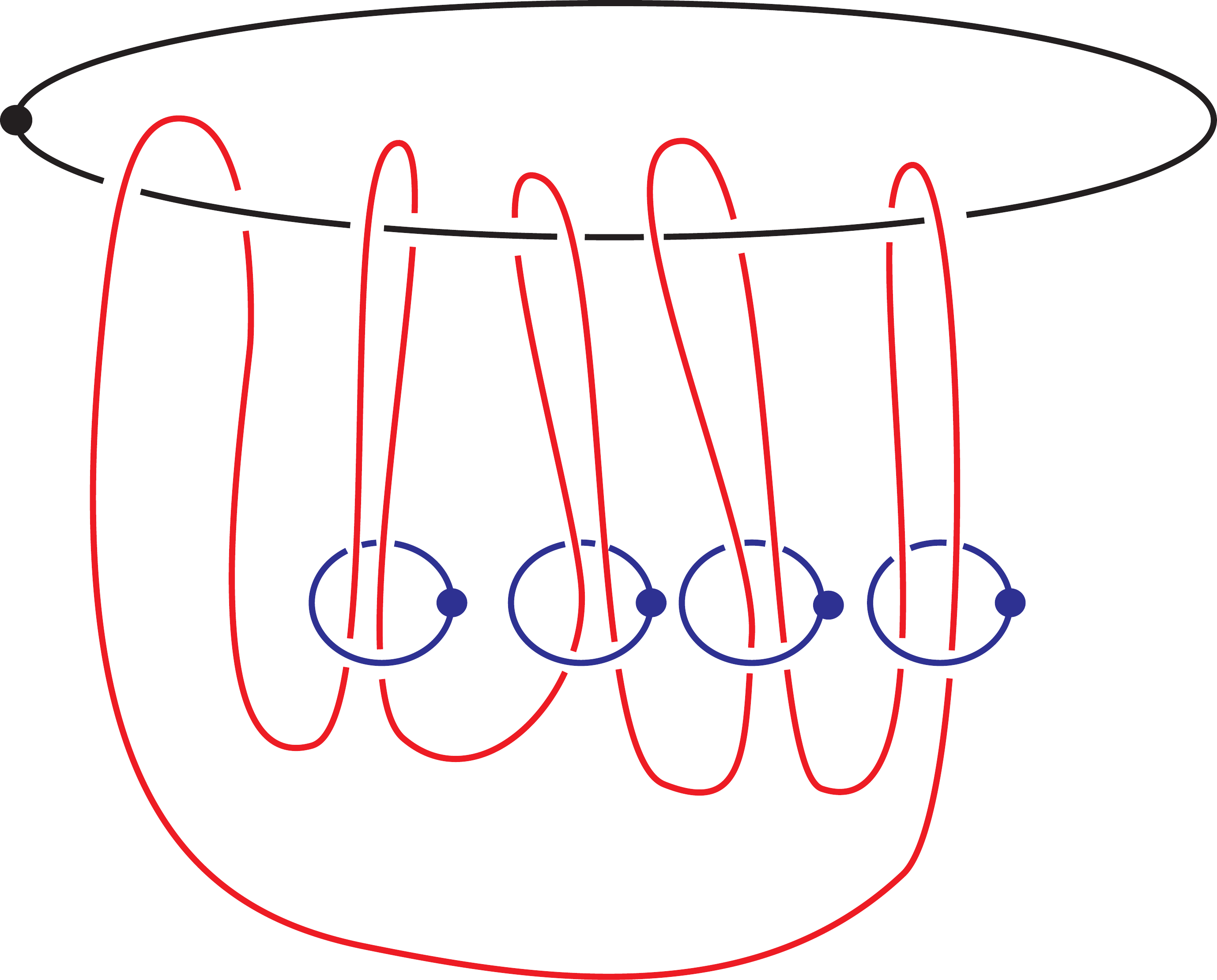}
\put(-220,150){$h_0$}
\put(-157,50){$h_1$}
\put(-125,50){$h_2$}
\put(-95,50){$h_3$}
\put(-65,50){$h_4$}
\put(-190,20){$\alpha$}
\caption{Disk Complement Even}
\label{fig:DiskComplementEven}
\end{subfigure} \quad \quad \quad \quad \quad
\begin{subfigure}[b]{0.3\textwidth}
\includegraphics[scale=.25]{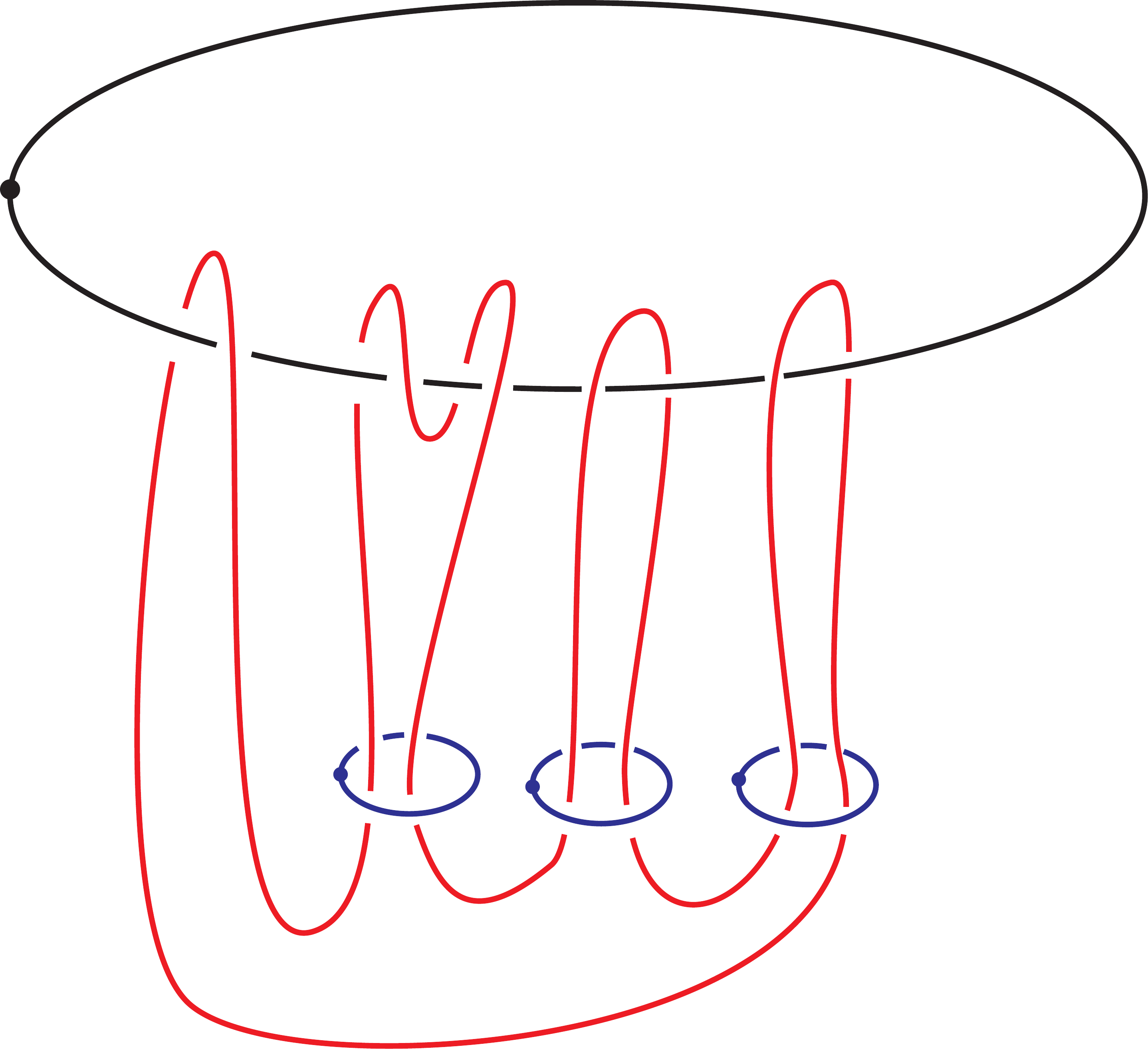}
\put(-220,155){$h_0$}
\put(-155,55){$h_1$}
\put(-122,55){$h_2$}
\put(-85,55){$h_3$}
\put(-190,20){$\alpha$}
\caption{Disk Complement Odd}
\label{fig:DiskComplementOdd}
\end{subfigure} \quad \quad \quad \quad \quad

\caption{}
\label{fig:BigFig}
\end{figure}

Consider Figures \ref{fig:LinkEven} and \ref{fig:LinkOdd}, which generalize to give the desired ribbon link in the following way.  First consider the link given by $L_1,L_2,L_3,L_4$ without the dotted circle or the $\alpha$ curve.  This is the unlink which bounds a set of disjoint disks.  This remains true when we attach the 1-handle $h_0$.  The next step is to attach a 2-handle, call it $a$, with attaching sphere $\alpha$ such that $\alpha,L_1,\dots,L_4$ is an unlink and $\alpha$ goes once over $h_0$.  Since $\alpha$ goes once over $h_0$ geometrically once, it cancels $h_0$ and therefore the resulting 4-manifold is $B^4$ and the image of $L_1,\dots,L_4$ is a slice link in this new $B^4$ (by abuse of notation we refer to these as $L_i$ for the rest of the paragraph).  We will construct links generalizing the ones in Figures \ref{fig:LinkEven} and \ref{fig:LinkOdd}; diagrams like Figure \ref{fig:LinkEven} correspond to polynomials in $F_1$ from Proposition \ref{ex:infFam} and diagrams like Figure \ref{fig:LinkOdd} correspond to polynomials in $F_2$ from Proposition \ref{ex:infFam}.

We construct the desired link in the following way.  The manifold that results from adding $h_0$ and attaching $a$ with framing $0$ is diffeomorphic to $B^4$ because $h_0$ and $\alpha$ are a canceling pair.  Let $f$ denote the diffeomorphism from $B^4 \cup h_0 \cup a$ to $B^4$.  Let $\delta_i$ be the obvious slice disk for $L_i$ in $B^4$.  We see that $f(\delta_i)$ is a slice disk for $f(L_i)$, since $\alpha$ is unlinked from $L_i$, and since $L_i$ is unlinked from $h_0$.  Let $X$ be $B^4 \backslash  (\cup_{i=1}^n f(\delta_i))$ and $\widetilde{X}$ be the universal abelian cover of $X$.  We want to compute the torsion Alexander polynomial for $f(L)$.  This turns out to be the torsion submodule of the Alexander module of $0$-surgery along $f(L)$, call it $M_{f(L)}$.  Since $X$ is the result of removing a product neighborhood the slice disks from $B^4$, it follows that $\partial X = M_{f(L)}$.  To calculate $T\cA(f(L))$ we will look at the universal abelian cover which is actually $\partial \widetilde{X}$.  For the rest of the proof we do not distinguish $L$ and $f(L)$.

We begin with $B^4$ and then attach $n+1$ 1-handles $h_0,\dots,h_n$ and call the result $X_0$.  This is diffeomorphic to having an unlink with $n+1$ components and removing each slice disk $\delta_i$ from $B^4$.  Let $p_0$ be a point in the interior of $B^4$.  Then $\pi_1(X_0) = \langle t,x_1,\dots,x_n \rangle$ where $t$ is the core of $h_0$ along with two arcs from the attaching sphere of $h_0$ to $p_0$ in $B^4$.  For $x_i$ we take arcs from $p_0$ to the attaching spheres of $h_i$ to obtain the other generators.  We then attach a 2-handle $a$ along $\alpha$ with framing $0$, similar to above, passing through the $i$th component then winding $k_i$ times around $h_0$ and then coming out again.  Performing handle slides shows that the link $L$ is a ribbon link.

Look at Figure \ref{fig:LinkOdd}, at one part $\alpha$ goes through $L_1$ and wraps twice around the curve with the dotted circle.  The picture for the general case is similar.  Even though we take the framing of $a$ to be 0, this is not crucial.  Call the resulting manifold $X$.  Then we take the cover $\widetilde{X}$ corresponding to the map $\phi: \pi_1(X) \rightarrow \Z^n$ given by $x_i \mapsto e_i$.  Since $\phi(t) = 0$, by definition, it follows that $h_0$ lifts to a 1-handle in the corresponding cover. The lift is freely permuted by the deck group $\Z^n$.  Notice that the induced cover on $X_0$, $\widetilde{X_0}$, is homotopy equivalent to the universal abelian cover of the wedge of $n$ circles with 1-handles indexed by $\Z^n$ attached equivariantly with respect to the deck group that comes from $\phi$.  The attaching sphere for each 1-handle corresponds to one of the $\Z[\Z^n]$ lifts of the base point $p_0$.  Since $\phi(\alpha) = 0$ by construction, $a$ lifts to the cover, making $\widetilde{X}$ homotopy equivalent to $\widetilde{X_0}$ with $\Z^n$ 2-handles attached to the multiple lifts of $\alpha$.
  
  We see that $ \Z[\Z^n] \oplus H_1(F_n,\Z[\Z^n]) \rightarrow H_1(X,\Z[\Z^n]) $ is a surjection, where $F_n$ is the free group on $n$ letters.  The kernel of the map is the equivariant image of $\alpha$, which is $p(e_1,\dots,e_n)$ by construction.  Therefore $$H_1(X,\Z[\Z^n])  = (\Z[\Z^n]/\langle p(e_1,\dots,e_n) \rangle) \oplus H_1(F_n, \Z[\Z^n]).$$  
  
  Next, we focus on $H_2(X;\Z[\Z^n])$ and observe that the only possible equivariant generator of $H_2(X,\Z[\Z^n])$ would be $a$ since it is the only 2-handle.  Since a lift of $\alpha$ goes over a lift of $h_0$ geometrically once it follows that $\partial \tilde{a} = \tilde{\alpha} \neq 0$.  This implies that $H_2(X,\Z[\Z^n]) = 0$.

We compute $H_2(X,M_L;\Z[\Z^n])$ by computing $H^2(X,\Z[\Z^n])$.  The handle body decomposition of $X$ gives rise to the chain complex $$0 \rightarrow C_2(X,\Z[\Z^n]) \rightarrow C_1(X,\Z[\Z^n]) \rightarrow C_0(X,\Z[\Z^n]) \rightarrow 0.$$  The generator of $C_2(X,\Z[\Z^n])$ as a $\Z[\Z^n]$ module is $ \alpha $ because there is only one equivariant 2-handle.  Next, $C_1(X,\Z[\Z^n]) = \langle t \rangle \oplus \langle x_1,\dots,x_n \rangle$, where $t,x_i$ are generators of $C_1(X,\Z[\Z^n])$ as free $\Z[\Z^n]$ modules obtained by picking a base point and lifting $t,x_i$.  Since $\partial t = 0$, it follows that $t$ is a cycle and $\partial \alpha = t p(e_1,\dots,e_n)$, where $e_i$ generate the deck group $\Z^n$.  

We calculate the cohomology using the chain complex is $0 \leftarrow C^2(X,\Z[\Z^n]) \leftarrow C^1(X,\Z[\Z^n]) \leftarrow C^0(X,\Z[\Z^n])$.  The modules $C^i(X,\Z[\Z^n])$ are finitely generated and free because the modules $C_i(X,\Z[\Z^n])$ are finitely generated and free.  The dual map from $C^1(X,\Z[\Z^n]) \rightarrow C^2(X,\Z[\Z^n])$ is determined by $t^* \rightarrow \alpha p(e_1,\dots,e_n)$ and $x_i^* \rightarrow 0$.  Therefore $H^2(X,\Z[\Z^n]) = \Z[\Z^n]/\langle p(e_1,\dots,e_n) \rangle$.  By Poincar\'{e}, $\overline{H^2(X,\Z[\Z^n])} \cong H_2(X,M_L,\Z[\Z^n])$, therefore $H_2(X,M_L;\Z[\Z^n]) = \Z[\Z^n]/\langle p(e_1^{-1},\dots,e_n^{-1}) \rangle$.
  
We will compute $H_1(M_L; \Z[\Z^n])$ by looking at the long exact sequence of a pair induced by $M_L \rightarrow X$.  Initially the long exact sequence is $$H_{i+1}(X,M_L; \Z[\Z^n]) \rightarrow H_i(M_L ; \Z[\Z^n]) \rightarrow H_i(X; \Z[\Z^n]) \rightarrow H_i(X,M_L; \Z[\Z^n]).$$  Looking at $(X,M_L)$ upside down we get a dual handle body decomposition which only has a $0$-handle, a $2$-handle, $n+1$ $3$-handles, and a $4$-handle.  Since there are no $1$-handles, $H_1(X,M_L; \Z[\Z^n]) = 0$.  We get an exact sequence $$0 \rightarrow H_2(X,M_L;\Z[\Z^n]) \rightarrow H_1(M_L;\Z[\Z^n]) \rightarrow H_1(X;\Z[\Z^n]) \rightarrow 0.$$  
  
    We are only interested in the torsion part of $H_1(M_L; \Z[\Z^n])$ and since $H_1(F_n;\Z[\Z^n])$ is torsion free we get the decomposition $TH_1(M_L;\Z[\Z^n]) \oplus H_1(M_L;\Z[\Z^n])/TH_1(M_L; \Z[\Z^n]) \rightarrow \Z[\Z^n]/\langle p(x_1,\dots,x_n) \rangle \oplus H_1(F_n; \Z[\Z^n]),$ where the map splits across the direct sum.

  Since $H_2(X,M_L;\Z[\Z^n])$ is torsion and injects into $H_1(M_L;\Z[\Z^n])$ it injects into the torsion module so to compute $TH_1(M_L;\Z[\Z^n])$ we have the following exact sequence, $$0 \rightarrow \Z[\Z^n]/\langle p(e_1^{-1},\dots,e_n^{-1})\rangle \overset{\psi}{\rightarrow} TH_1(M_L;\Z[\Z^n]) \overset{\phi}{\rightarrow} \Z[\Z^n]/\langle p(e_1,\dots,e_n) \rangle \rightarrow 0.$$

Finally we compute $TH_1(M_L; \Z[\Z^n])$.  By construction $TH_1(M_L;\Z[\Z^n])$ is cyclically generated by $t$.  We see this by looking at a cover of $S^3 \backslash (L_0 \sqcup \cdots \sqcup L_n)$ where $L_1,\dots,L_n$ is the original link and $L_0$ comes from attaching a 1-handle.  The cover we are looking at is the one associated to the homomorphism $\psi:  \pi_1(S^3 - (L_0 \sqcup L_1 \sqcup \dots \sqcup L_n) ) \rightarrow \Z^n$, where $\psi(\mu_i) = e_i$ for $i$ not equal to $0$ and $\psi(\mu_0) = 0$, where $\mu_i$ is the meridian of $L_i$.  

Let $A_0$ denote the universal abelian cover of $\wedge_{i=1}^n S^1$ and let $p_0$ be a lift of the wedge point.  Then $\Z^n$ acts on $p_0$ by deck translations.  Take $\Z^n$ copies of $I$ parameterize them as $(t,i_1,\dots,i_n)$.  Let $A$ be the space obtained by taking $A_0$ and attaching $\Z^n$ copies of $I$ using a map that identifies $(0,i_1,\dots,i_n)$ and $(1,i_1,\dots,i_n)$ with the point $(i_1,\dots,i_n)p_0$.  The induced cover of $B^4 \backslash (\cup_{i=0}^n \nu(\delta_i))$ is homotopy equivalent $A$.  Each copy of $(i_1,\dots,i_n)p_0 \cup I\times(i_1,\dots,i_n)$ corresponds to a different lift of the meridian of $L_0$ and will be the generator of $TH_1(M_L;\Z[\Z^n])$.  

  So $TH_1(M_L;\Z[\Z^n]) = \Z[\Z^n]/J$ where $J$ is an ideal of $\Z[\Z^n]$.  The ideal $J$ is finitely generated because $\Z[\Z^n]$ is noetherian, therefore $J = \langle f_1,\dots,f_n \rangle$ for some $f_k \in \Z[\Z^n]$.  We analyze the module maps $\phi,\psi$.  Observe that $\phi(f_k) = 0$ for all $k$, therefore $p(e_1,\dots,e_n)$ divides each $f_k$, so each $f_k = p(e_1,\dots,e_n)g_k$ for some $g_k$.  Considering each $f_k$ as an element of $\Z[\Z^n]/\langle p(e_1^{-1},\dots,e_n^{-1}) \rangle$, when we apply $\psi$ we see that $\psi(f_k) = f_k \psi(1) = 0$, so $p(e_1^{-1},\dots,e_n^{-1})$ divides $f_k$.  Since $\Z[\Z^n]$ is a unique factorization domain and since $p(e_1,\dots,e_n)$ and $p(e_1^{-1},\dots,e_n^{-1})$ are coprime it follows that $f_k = p(e_1,\dots,e_n)p(e_1^{-1},\dots,e_n^{-1})h_k$.  This shows that $J \subset \langle p(e_1,\dots,e_n)p(e_1^{-1},\dots,e_n^{-1}) \rangle$.
  
We want to show that $J = \langle p(e_1,\dots,e_n)p(e_1^{-1},\dots,e_n^{-1}) \rangle$, so consider $p(e_1,\dots,e_n) \in \Z[\Z^n] / J$.  It follows that $\phi(p(e_1,\dots,e_n)) = p(e_1,\dots,e_n) \phi(1) = 0$ so $p(e_1,\dots,e_n) \in ker(\phi)$.  Since the sequence is exact $p(e_1,\dots,e_n)$ is an element of $image(\psi)$.   By exactness there exists some $r \in \Z[\Z^n]/\langle p(e_1^{-1},\dots,e_n^{-1}) \rangle$ such that $\psi(r)$ equals  $p(e_1,\dots,e_n)$.  We see that  $p(e_1,\dots,e_n)p(e_1^{-1},\dots,e_n^{-1})$ is also $0$ in $\Z[\Z^n]/J$, since $p(e_1^{-1},\dots,e_n^{-1})r$ is $0$ in $\Z[\Z^n]/\langle p(e_1^{-1},\dots,e_n^{-1}) \rangle$.   This shows that $\langle p(e_1,\dots,e_n)p(e_1^{-1},\dots,e_n^{-1}) \rangle$ is a subset of $J$ which shows that $J$ equals $\langle p(e_1,\dots,e_n)p(e_1^{-1},\dots,e_n^{-1}) \rangle$.  It follows that the torsion Alexander module $T\cA = TH_1(M_L;\Z[\Z^n]) = \Z[\Z^n]/\langle p(e_1,\dots,e_n)p(e_1^{-1},\dots,e_n^{-1}) \rangle$, and that $T\cA$ is cyclic with $\Delta_L(e_1,\dots,e_n) = p(e_1,\dots, e_n)p(e_1^{-1},\dots,e_n^{-1})$.   \end{proof}

\begin{cor}\label{cor:NonTrivBlf}
For the links $L$ constructed in Proposition \ref{prop:ExistenceOfLink} the Blanchfield form is nontrivial, and there exists an $\eta \in A(L)$ such that $\blf(\eta,\eta) \neq 0$.
\end{cor}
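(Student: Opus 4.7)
The plan is to combine the explicit structure of the torsion Alexander module from Proposition \ref{prop:ExistenceOfLink} with non-singularity of the multivariable Blanchfield pairing on torsion. Recall that $\blf\colon T\cA(L) \times T\cA(L) \to \Q(\Z^n)/\Z[\Z^n]$ is a Hermitian pairing, with the involution sending $x_i \mapsto x_i^{-1}$. From Proposition \ref{prop:ExistenceOfLink}, $T\cA(L) = \Z[\Z^n]/\langle p \bar p\rangle$ is cyclic, generated by the class $t$ represented by the meridian arising from the $1$-handle $h_0$, where $\bar p = p(x_1^{-1},\dots,x_n^{-1})$. Since every coefficient $k_i$ is nonzero, $p$ has positive total degree, so $p\bar p$ is not a unit in $\Z[\Z^n]$ and $T\cA(L) \neq 0$.

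I would then either invoke non-singularity of $\blf$ on $T\cA(L)$ as a standard consequence of multivariable Blanchfield duality (see \citep[Ch.~2]{JH12}) or compute $\blf(t,t)$ directly using the $4$-manifold $X$ with $\partial X = M_L$ from the proof of Proposition \ref{prop:ExistenceOfLink}. For the direct approach, a lift of the $2$-handle $a$ provides a $2$-chain $\Sigma \subset \widetilde X$ satisfying $\partial \Sigma = \bar p \cdot \tilde t$ in $\widetilde{M_L}$ (this is exactly the image of the map $\psi$ that appeared in the computation of $T\cA(L)$); pairing $\Sigma$ with a parallel pushoff of $\tilde t$ via the equivariant intersection form on $\widetilde X$ then gives $\blf(t,t) = u/(p\bar p)$ for some unit $u \in \Z[\Z^n]$, which is nonzero in $\Q(\Z^n)/\Z[\Z^n]$ since $p\bar p$ is not a unit. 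Once non-singularity is established, the corollary is immediate from the cyclic structure of $T\cA(L)$: any vanishing of $\blf(t,t)$ forces $\blf(at,bt) = a \bar b\, \blf(t,t) = 0$ for every $a,b \in \Z[\Z^n]$, so the whole pairing would vanish on a nonzero module, a contradiction. Taking $\eta = t$ then completes the argument.

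The main obstacle is the equivariant intersection computation on $\widetilde X$. The handle-theoretic data already used to compute $T\cA(L)$ should suffice, but carrying out the equivariant intersection so that the answer is $u/(p\bar p)$ rather than $0$ requires that no cancellation occurs among the $\Z^n$-translates of $\Sigma$; this is where coprimality of $p$ and $\bar p$, which follows from the strong irreducibility of $p$ established via Lemma \ref{lem:FinalCriterion} and Proposition \ref{ex:infFam}, enters. If one prefers to appeal to non-singularity as a black box, the remaining work reduces to verifying the hypotheses of the standard multivariable Blanchfield duality for $M_L$, which again rest on the coprimality of the factors of $\Delta_L$.
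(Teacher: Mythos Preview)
Your overall architecture---show $\blf$ is nontrivial on the cyclic module $T\cA(L)=\Z[\Z^n]/\langle p\bar p\rangle$, then conclude $\blf(t,t)\neq 0$ because any pairing on a cyclic module is determined by its value on the generator---matches the paper's final step exactly. The difference lies in how nontriviality of $\blf$ is obtained.

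The paper does not compute the equivariant intersection you describe, nor does it invoke non-singularity of the multivariable Blanchfield form as a black box. Instead it passes to the localization $S^{-1}\Lambda$, where $S$ consists of polynomials strongly coprime to $\Delta_L$; by Proposition~\ref{prop:localizedPID} this ring is a PID. Over a PID the Kronecker map is surjective (universal coefficients) and $K/S^{-1}\Lambda$ is divisible, so the composite $j\circ\kappa\circ P.D.$ defining the localized Blanchfield form surjects onto $\Hom(TH_1(M_L;S^{-1}\Lambda),K/S^{-1}\Lambda)$, which is visibly nonzero for the module at hand. A commutative square then pulls this back to the integral $\blf$.

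Your direct intersection computation, if carried out, would give the stronger explicit formula $\blf(t,t)\doteq 1/(p\bar p)$, but as you note the cancellation issue is real work (and incidentally, from the proof of Proposition~\ref{prop:ExistenceOfLink} the lift of the $2$-handle has $\partial\tilde a = p\cdot\tilde t$, not $\bar p\cdot\tilde t$; the $\bar p$ enters via the Poincar\'e-dual side). The paper's localization route sidesteps this entirely: it trades the geometric computation for the algebraic fact that $S^{-1}\Lambda$ is a PID, which was already established in Section~\ref{sec:strongIrred}. Your black-box citation to Hillman would also work, but the paper prefers to stay self-contained via the PID localization, which is in any case needed later for the localized Blanchfield forms in Section~\ref{sec:doubOp}.
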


\begin{proof} First we state some facts about the types of links in Proposition \ref{prop:ExistenceOfLink}.  For these links $L$ with $n$-components, the coefficient ring we use to twist homology is $S^{-1}\Lambda_{L} = S^{-1} \Z[\Z^n]$, where $S$ is the multiplicative set generated by all the polynomials strongly coprime to $\Delta_L$.  Since $L$ is understood we suppress it from the notation.  For the links from Proposition \ref{prop:ExistenceOfLink} the following are true \begin{itemize}
\item $TH_1(M_L;\Z[\Z^n]) = \Z[\Z^n]/\langle p(x_1,\dots,x_n) p(x_1^{-1},\dots,x_n^{-1}) \rangle$
\item $TH_1(M_L;\Z[\Z^n]) \hookrightarrow TH_1(M_L;S^{-1}\Lambda)$
\item $Hom(TH_1(M_L;S^{-1}\Lambda),K\Gamma/S^{-1}\Lambda) \neq 0$, where $K\Gamma$ is the field of fractions of $\Z[\Z^n]$.
\end{itemize}

For the rest of the proof for a given module left module $M$ over the group ring $\Z \Gamma$ when we write $\overline{M}$ to mean take the induced right module that comes from the group homomorphism $inv: \Z \Gamma \rightarrow \Z \Gamma$ such that $inv(\gamma) = \gamma^{-1}$.  To prove the proposition we first calculate the localized Blanchfield form $\blf_{S^{-1}\Lambda}$.  The localized Blanchfield form is defined in \citep[Theorem 2.3]{CL12} and comes from the commutative diagram in Figure \ref{fig:BFF}.  Observe that by Proposition \ref{prop:localizedPID} $S^{-1}\Lambda$ is a principal ideal domain.

\begin{figure}

\begin{tikzpicture}[xscale = 5,yscale = 2]
\node (A) at (-0.75,3) {$H_2(M_L;K) $};
\node (B) at (-0.75,2) {$\overline{H^1(M_L;K)}$};
\node (C) at (-0.75,1) {$\overline{Hom_{S^{-1}\Lambda}(H_1(M_L;S^{-1}\Lambda);K)}$};
\node (D) at (-0.75,0) {$\overline{Hom_{S^{-1}\Lambda}(TH_1(M_L;S^{-1}\Lambda),K)}$};
\node (E) at (0.75,3) {$H_2(M_L;K/S^{-1}\Lambda)$};
\node (F) at (0.75,2) {$\overline{H^1(M_L;K/S^{-1}\Lambda)}$};
\node (G) at (0.75,1) {$\overline{Hom_{S^{-1}\Lambda}(H_1(M_L;S^{-1}\Lambda);K/S^{-1}\Lambda)}$};
\node (H) at (0.75,0) {$\overline{Hom_{S^{-1}\Lambda}(TH_1(M_L;S^{-1}\Lambda),K/S^{-1}\Lambda)}$};
\path[->,font=\scriptsize,>=angle 90] 
(A) edge node[above]{$\psi$} (E)
(A) edge node[right]{$P.D.$} (B)
(B) edge node[above]{} (F)
(B) edge node[right]{$\kappa$} (C)
(C) edge node[above]{} (G)
(C) edge node[right]{$j$} (D)
(D) edge node[above]{} (H)
(E) edge node[right]{$P.D.$} (F)
(F) edge node[right]{$\kappa$} (G)
(G) edge node[right]{$j$} (H);
\end{tikzpicture}
\caption{Generalized Blanchfield Form}
\label{fig:BFF}
\end{figure}

The Blanchfield form is a map $\blf_{S^{-1}\Lambda}: TH_1(M_L; S^{-1}\Lambda) \times TH_1(M_L; S^{-1}\Lambda) \rightarrow K/S^{-1}\Lambda$, where $K$ is the field of fractions of $S^{-1}\Lambda$.  We use the Bockstein sequence that arises from the short exact sequence $0 \rightarrow S^{-1}\Lambda \rightarrow K \rightarrow K/S^{-1}\Lambda \rightarrow 0$.  This induces a long exact sequence $H_p(M_L;S^{-1}\Lambda) \rightarrow H_p(M_L;K) \rightarrow H_p(M_L;K/S^{-1}\Lambda) \rightarrow H_{p-1}(M_L;S^{-1}\Lambda)$.  Leidy shows in \citep[Theorem 2.3]{CL12} that to define a Blanchfield form it suffices to define one on $H_2(M_L;K/S^{-1}\Lambda)$ with $ker(\blf_{S^{-1}\Lambda}) = im(H_2(M_L;K))$ from the Bockstein sequence.  

The map is defined using the diagram in Figure \ref{fig:BFF} by going down the right column.  More precisely, using Poincare Duality with twisted coefficients, there exists a map $P.D.$ from $H_2(M_L;K/S^{-1}\Lambda)$ to $\overline{H^1(M_L;K/S^{-1}\Lambda)}$.  Compose with the Kronocker evaluation map $\kappa: \overline{H^1(M_L;KS^{-1}\Lambda)} \rightarrow \overline{Hom(H_1(M_L;S^{-1}\Lambda);K/S^{-1}\Lambda)}$.  Compose with the map induced from inclusion $$j:\overline{Hom(H_1(M_L;S^{-1}\Lambda);K/S^{-1}\Lambda)} \rightarrow \overline{Hom(TH_1(M_L;S^{-1}\Lambda);K/S^{-1}\Lambda)}.$$  The composition of these three maps is the Blanchfield form $\blf_{S^{-1}\Lambda} = j \circ \kappa \circ P.D.$ and is well-defined \citep[Theorem 2.3]{CL12}.

The present question is the nontriviality of $\blf_{S^{-1}\Lambda}$ for the specific $S^{-1}\Lambda$ we are using.  We only need to show the composition is not trivial, but this follows because all three maps are surjective.  Poincare Duality is an isomorphism and therefore surjects onto its image.  The Kronecker map $\kappa$ is surjective, this comes from the universal coefficient theorem over the PID $S^{-1}\Lambda$.  The third map $j$ is also surjective because $K/S^{-1}\Lambda$ is a divisible module.  This follows because $Hom(-;K/S^{-1}\Lambda)$ is an exact functor when the target is a divisible module.  Divisibility of $K/S^{-1}\Lambda$ follows from checking the Baer criterion \citep[Thm 6.89 pg. 462]{JR10} and since $S^{-1}\Lambda$ is a PID.  The Blanchfield form is the composition of three surjective maps and is therefore surjective.  The Blanchfield form is not trivial because it surjects onto $Hom(TH_1(M_L;S^{-1}\Lambda),K/S^{-1}\Lambda)$ which is not trivial by construction.

We have shown that the localized Blanchfield form is nontrivial.  There is a relationship between the classical Blanchfield form and the localized Blanchfield form, since $S^{-1}\Lambda$ is a flat $\Z[x_1^{\pm1},\dots,x_n^{\pm1}]$ module.  The relationship is shown in the diagram in Figure \ref{fig:cdBFF}. 
 
 \begin{figure}[h!]
 
 \begin{tikzpicture}[xscale = 7,yscale = 2]
 \node (A) at (0,1) {$H_1(M_L; \Z[x_1^{\pm1},\dots,x_n^{\pm1}])\times H_1(M_L; \Z[x_1^{\pm1},\dots,x_n^{\pm1}])$};
 \node (B) at (0,0) {$H_1(M_L;S^{-1}\Lambda) \times H_1(M_L; S^{-1}\Lambda)$};
 \node (C) at (1,1) {$K/\Z[x_1^{\pm1},\dots,x_n^{\pm1}]$};
 \node (D) at (1,0) {$K/S^{-1}\Lambda$};
 \path[->,font=\scriptsize,>=angle 90] 
 (A) edge node[right]{$i^*$} (B)
 (A) edge node[above]{$\blf$} (C)
 (B) edge node[above]{$\blf_{S^{-1}\Lambda}$} (D)
 (C) edge node[right]{$i^*$} (D);
 \end{tikzpicture}
 
 \caption{Blanchfield Forms and Localized Blacnchfield Forms}
 \label{fig:cdBFF}
 \end{figure}

Therefore the classical Blanchfield form is non trivial and since the torsion Alexander module is cyclic and generated by some curve $\eta$ it follows that $\blf(\eta,\eta)\neq 0$\end{proof}

\section{Filtration and Localization}\label{sec:filtration}

In this section we review some parts of \citep{CHL11} and \citep{JB14} for completeness.

\begin{defn}\label{def:ptfa}\citep[Proposition 2.2]{CHL11}
A group $G$ is poly torsion free abelian if it admits a finite subnormal series $\langle 1 \rangle  \triangleleft G_n \triangleleft G_{n-1}  \triangleleft \cdots  \triangleleft G_0 = G$ such that the factors $G_i/G_{i+1}$ are torsion free abelian.
\end{defn}

\begin{defn}\label{def:commutatorSeries}\citep[Definition 2.1]{CHL11}
A {\em commutator series} is a function $*$ that assigns to each group $G$ a nested sequence of normal subgroups $$\dots \triangleleft G_*^{(n+1)} \triangleleft G_*^{(n)} \triangleleft \dots \triangleleft G_*^{(0)} = G,$$ such that $G_*^{(n)}/G_*^{(n+1)}$ is a torsion free abelian group.  A {\em functorial commutator series} is one that is a functor from the category of groups to the category of series of groups, that is, a commutator series such that, for any group homomorphism $f: G \rightarrow \pi$, $f(G_*^{(n)}) \subset \pi_*^{(n)}$ for each $n$.  If $G_*^{(i)}$ is defined only for $i \leq n$, then this will be called a {\em partially defined commutator series}.

\end{defn}

\begin{defn}\label{def:weakFunctorial}\citep[Definition 2.4]{CHL11}
A commutator series $\{G_*^{(n)}\}$ is {\em weakly functorial} if, for any homomorphism $f: G \rightarrow \pi$ that induces an isomorphism between $G/G^{(1)}_r$ and $\pi/\pi^{(1)}_r$, where $\pi^{(1)}_r$ is from the rational derived series (that is $f$ induces an isomorphism on $H_1(-;\Q)$).
\end{defn}

The commutator series was defined in \citep{CHL11} and behaves like the rational derived series.  For the rest of the paper $G^{(n)}$ refers to the $n$-th term in the derived series and $G^{(n+1)}_*$   Here is a small useful lemma about commutator series.

\begin{lem}\citep[Proposition 2.2 (1)]{CHL11}\label{lem:usefulLem}
Fixing $i$, if $g$ is an element of $(G^{(i)}_*)^{(k)}$, then $g \in G^{(i+k)}_*$.
\end{lem}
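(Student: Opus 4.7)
The plan is to prove this by induction on $k$, using the defining property of commutator series that each successive quotient $G^{(n)}_*/G^{(n+1)}_*$ is torsion-free abelian.

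The base case $k=0$ is immediate, since $(G^{(i)}_*)^{(0)} = G^{(i)}_*$ by definition of the ordinary derived series, and this equals $G^{(i+0)}_*$.

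For the inductive step, I would assume $(G^{(i)}_*)^{(k)} \subseteq G^{(i+k)}_*$ and show $(G^{(i)}_*)^{(k+1)} \subseteq G^{(i+k+1)}_*$. By definition of the ordinary derived series,
\[
(G^{(i)}_*)^{(k+1)} = \bigl[(G^{(i)}_*)^{(k)}, (G^{(i)}_*)^{(k)}\bigr].
\]
Applying the inductive hypothesis on both slots of the commutator gives
\[
(G^{(i)}_*)^{(k+1)} \subseteq \bigl[G^{(i+k)}_*, G^{(i+k)}_*\bigr].
\]
The key step is the observation that the right-hand side lies in $G^{(i+k+1)}_*$. This is forced by Definition \ref{def:commutatorSeries}: since the quotient $G^{(i+k)}_*/G^{(i+k+1)}_*$ is torsion-free abelian, in particular abelian, every commutator of elements of $G^{(i+k)}_*$ must lie in the kernel of the quotient map, namely $G^{(i+k+1)}_*$. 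Combining these two inclusions closes the induction.

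I do not expect any genuine obstacle here; the only subtlety is recognizing that the torsion-free condition is not needed, only the fact that the successive quotients are abelian, which is what lets commutators drop one level in the starred series. The argument is purely formal and uses no properties of the commutator series beyond what is built into Definition \ref{def:commutatorSeries}, which is consistent with this lemma appearing as part of \citep[Proposition 2.2]{CHL11}.
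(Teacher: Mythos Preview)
Your proof is correct and follows essentially the same approach as the paper: induction on $k$, with the inductive step using that commutators of elements of $G^{(i+k)}_*$ lie in $G^{(i+k+1)}_*$ because the quotient is abelian. Your justification of that last step via Definition \ref{def:commutatorSeries} is in fact slightly cleaner than the paper's phrasing, which somewhat awkwardly invokes ``the inductive hypothesis again'' (with a shifted index) to pass from $(G^{(i+n)}_*)^{(1)}$ to $G^{(i+n+1)}_*$.
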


\begin{proof} We prove this by induction on $k$.  The base case $k = 0$ is true because $G^{(i)}_* = G^{(i)}_*$.  Assume that $g \in (G^{(i)}_*)^{(n+1)}$.  Then $g = \prod [\gamma_i,\eta_i]$ where $\gamma_i,\eta \in  (G^{(i)}_*)^{(n)}$.  By the inductive hypothesis $\gamma_i,\eta \in  G^{(i+n)}_*$ and therefore $g \in (G^{(i+n)}_*)^{(1)}$.  Applying the inductive hypothesis again we obtain that $g \in G^{(i+n+1)}_*$, as desired\end{proof}

Modified $n$-solvable filtrations arise from different commutator series.  These filtrations are defined as follows.

\begin{defn}\label{def:filtration}\citep[Definition 2.3]{CHL11}
A string link $L$ is an element of $\F_{n}^{*}$ if the zero-framed surgery $M_L$ bounds a compact smooth spin 4-manifold $W$ such that 
\begin{enumerate}
\item $H_1(M_L; \Z) \rightarrow H_1(W;\Z)$ is an isomorphism;
\item $H_2(W;\Z)$ has a basis consisting of connected compact oriented surfaces $\{L_i,D_i \mid 1 \leq i \leq r\}$, embedded in $W$ with trivial normal bundles, wherein the surfaces are pairwise disjoint except that, for each $i,$ $L_i$ intersects $D_i$ transversely once with positive sign;
\item  For each $i$, $\pi_1(L_i) \subset \pi_1(W)_*^{(n)}$ and $\pi_1(D_i) \subset \pi_1(W)_*^{(n)}$;

\noindent A knot $K$ is an element of $\F_{n.5}^*$ if in addition 
\item for each $i$, $\pi_1(L_i) \subset \pi_1(W)_*^{(n+1)}$
\end{enumerate}
\end{defn}

If $L$ is in $\F_{n}^{*}$, we say that $L$ is $(n,*)$-solvable, and the manifold $W$ is an $(n,*)$-solution.  If $L$ is in $\F_{n.5}^{*}$ we say that $L$ is $(n.5,*)$-solvable and the manifold $W$ is an $(n.5,*)$-solution.  To prove the main result we need to find a link $L$ that has $\ell$ components with the property that $L$ is $(1,*)$-solvable for a specific commutator series $*$ but not $(2,*)$-solvable.  We also want the additional property that if a link $L'$ has fewer components than $L'$ is forced to be $(2,*)$-solvable.  It is necessary to construct a commutator series that is tailored to $L$.  The construction is based on looking at terms in higher order Alexander modules and localizing the coefficient ring.  For the following definitions $\Gamma$ is a group and we use the group ring $\Q[\Gamma]$.  In practice $\Gamma = \pi_1(W)/\pi_1(W)^{(n)}_*$.  We need the localization tools found in \citep{JB14} which we review.

\begin{defn}\label{defn:localSets}\citep[Definition 4.7]{JB14}
Suppose $A$ is a normal subgroup of $\Gamma$ and suppose that $A$ is a torsion free abelian group and $\Q[ \Gamma]$ is a right Ore domain.  If $p \in \Q[t_1^{\pm1},\dots,t_n^{\pm1}]$ is non-zero then set $$S_p^{\Gamma,A} = \{\prod\limits_{i=1}^{r}q_i(a_{i,1},\dots,a_{i,s_i}) \mid p,q_i \mbox{ are strongly coprime, } q_j(1,\dots,1) \neq 0, a_{i,j} \in A\}.$$
\end{defn}

When $\Gamma$ and $A$ are understood we will suppress them from the notation.  One thing to note for those who have read \citep{JB14} is that our definition of strongly coprime differs from \citep{JB14} but as sets the multiplicative set $S^{\Gamma,A}_p$ are the same.  The difference is that we require $\{a_1,\dots,a_n\}$ and $\{b_1,\dots,b_n\}$ to be linearly independent sets in the definition of strongly coprime.  Burke does not require the set $\{b_1,\dots,b_n\}$ to be linearly independent.  Using the linear dependence in Burke's definition, it is easily checked that the $S_p^{\Gamma,A}$ are the same sets.  

\begin{prop}\label{prop:divSet}(\citep[Corollary 4.3]{CHL11},\citep[Proposition 4.8]{JB14})
$S_p^{\Gamma,A}$ is a right divisor set.
\end{prop}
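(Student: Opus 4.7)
The plan is to verify the two defining properties of a right divisor set: multiplicative closure (with $1 \in S_p^{\Gamma,A}$) and the right Ore condition, i.e., for every $r \in \Q[\Gamma]$ and $s \in S_p^{\Gamma,A}$ there exist $r' \in \Q[\Gamma]$ and $s' \in S_p^{\Gamma,A}$ with $rs' = sr'$. Multiplicative closure is immediate from the definition: the empty product gives $1$, and the concatenation of two finite products of polynomial evaluations is again a product of the same form.

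The essential ingredient will be a conjugation invariance. Because $A$ is normal in $\Gamma$, conjugation by any $\gamma \in \Gamma$ is a group automorphism of the torsion-free abelian group $A$, so it carries linearly independent subsets of $A$ to linearly independent subsets. Hence for any $s = \prod_i q_i(a_{i,1},\dots,a_{i,s_i}) \in S_p^{\Gamma,A}$,
\[
\gamma^{-1} s \gamma \;=\; \prod_i q_i(\gamma^{-1} a_{i,1} \gamma, \dots, \gamma^{-1} a_{i,s_i} \gamma)
\]
is of the same form: the polynomials $q_i$ are unchanged (so they remain strongly coprime to $p$ and still satisfy $q_i(1,\dots,1) \neq 0$), and the arguments still lie in $A$. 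Thus $\gamma^{-1} s \gamma \in S_p^{\Gamma,A}$, so $S_p^{\Gamma,A}$ is invariant under $\Gamma$-conjugation.

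For the Ore condition, I will use that $\Q[\Gamma]$ is a free right $\Q[A]$-module with basis given by a set of left coset representatives $\{\gamma_i\}$ of $\Gamma/A$. Writing $r = \sum_i \gamma_i \alpha_i$ with $\alpha_i \in \Q[A]$ and only finitely many $\alpha_i$ nonzero, set $s^{\gamma_i} := \gamma_i^{-1} s \gamma_i \in \Q[A]$, which lies in $S_p^{\Gamma,A}$ by the conjugation step. Then define
\[
s' \;=\; \prod_{i : \alpha_i \neq 0} s^{\gamma_i}, \qquad \beta_i \;=\; \alpha_i \prod_{j \neq i} s^{\gamma_j}, \qquad r' \;=\; \sum_i \gamma_i \beta_i.
\]
Closure under products gives $s' \in S_p^{\Gamma,A}$, and clearly $r' \in \Q[\Gamma]$. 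Since $\Q[A]$ is commutative, $\alpha_i s' = s^{\gamma_i} \beta_i$ for each $i$; combined with $\gamma_i s^{\gamma_i} = s \gamma_i$, this yields
\[
rs' \;=\; \sum_i \gamma_i (\alpha_i s') \;=\; \sum_i \gamma_i s^{\gamma_i} \beta_i \;=\; \sum_i s \gamma_i \beta_i \;=\; s r',
\]
as desired.

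The only point requiring care is the conjugation lemma, since strong coprimality is phrased in terms of linearly independent evaluations; what makes it routine is that group automorphisms of a torsion-free abelian group preserve linear independence of finite subsets. With that in hand, both closure and the Ore calculation are straightforward manipulations exploiting the crossed-product structure of $\Q[\Gamma]$ over the commutative subring $\Q[A]$ containing $S_p^{\Gamma,A}$.
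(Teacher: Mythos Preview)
The paper does not give its own proof here; it simply cites \citep[Corollary 4.3]{CHL11} and \citep[Proposition 4.8]{JB14}. Your argument is correct and is essentially the standard one found in those references: closure under conjugation (using normality of $A$ in $\Gamma$) together with the coset decomposition $\Q[\Gamma] = \bigoplus_i \gamma_i\,\Q[A]$ reduces the right Ore condition to a commutative computation in $\Q[A]$.

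One minor simplification: your emphasis on preservation of linear independence under conjugation is unnecessary. Membership in $S_p^{\Gamma,A}$ depends only on (i) the polynomial $q_i$ being strongly coprime to $p$ with $q_i(1,\dots,1)\neq 0$, and (ii) the evaluation points lying in $A$. Conjugation by $\gamma$ does not change the polynomial $q_i$ at all --- it only replaces the arguments $a_{i,j}$ by $\gamma^{-1}a_{i,j}\gamma$, which remain in $A$ by normality. So conjugation invariance of $S_p^{\Gamma,A}$ is immediate, without any appeal to linear independence.
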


Since $S_p$ is a right divisor set then it makes sense to consider the module $\Q \Gamma S_p^{-1}$ which we think of as ``$\Q \Gamma$ localized at $p$''.  For right $\Q \Gamma$ modules $M$ we get $MS_p^{-1} = M \otimes_{\Q\Gamma} \Q \Gamma S_p^{-1}$ which we call  ``$M$ localized at $p$''.  Note that $\Q \Gamma S_p^{-1}$ is a $\Q \Gamma - \Q \Gamma S_p^{-1}$ bimodule.  We apply this to groups in the following manner.  Observe that $G/G_*^{(n)}$ acts on $G_*^{(n)} /G_*^{(n+1)}$, the action is that for $\gamma \in G/G_*^{(n)}$ and $g \in G_*^{(n)} /G_*^{(n+1)}$ $\gamma*g = \gamma g \gamma^{-1}$.  We see that $G_*^{(n)} /G_*^{(n+1)}$ is a right $\Z[ G/G_*^{(n)}]$ module.  

Let $P = (p_1(x_1,\dots,x_n),\dots,p_n(x_1,\dots,x_n))$ where $p_j(x_1,\dots,x_n) \in \Q[x_1^{\pm1},\dots,x_n^{\pm1}]$.  

\begin{defn}\label{defn:pSeries}{\citep[Definition 4.10]{JB14} }The {\em derived series localized} at $P$ is given by $G_P^{(0)} = G$ and for $n \geq 0$, $$G_P^{(n+1)} = ker \left( G_P^{(n)} \xrightarrow{\phi_1} \dfrac{G_P^{(n)}}{[G_P^{(n)},G_P^{(n)}]} \xrightarrow{\phi_2} \dfrac{G_P^{(n)}}{[G_P^{(n)},G_P^{(n)}]} \otimes_{\Z[G/G_P^{(n)}]} \Q[G/G_P^{(n)}] S_{p_n}^{-1}\right)$$ where $S_{p_n} = S_{p_n}^{G/G_P^{(n)},G_P^{(n-1)}/G_P^{(n)}}$ and if $n = 0$ $S_{p_n} = \{1\}$

\end{defn}

Looking closely at Definition \ref{defn:pSeries} we see that a derived series localized at $P$ is a commutator series since the map $\phi_1$ is abelianization and the map $\phi_2$ can be viewed as taking a tensor with $\Q$ to kill $\Z$ torsion and then killing all the $S_{p_n}$ torsion.

\section{Torsion Doubling Operators and Main Results}\label{sec:doubOp}

The following construction is important.  We use it through the rest of this paper.  It is a cobordism that we use repeatedly when constructing $n$-solutions and $(n,*)$-solutions.  First recall that for an infection $R_T(L)$ there are three pieces of data, the string links $R,L$ and a special embedding of $T$ into the complement of $R$ in $S^3$.  

\begin{defn}\label{defn:cobordismConstruction}  Let $Z = M_R \times [0,1] \sqcup_f M_L \times [0,1]$ where $f$ identifies $T$, the embedded exterior of the trivial string link in $M_R \times \{0\}$, with the handlebody used to construct the zero surgery of $M_L \times \{0\}$ (see Definition \ref{def:zeroSurgery} and Definition \ref{def:infectRbySL}).  The map $f$ identifies the preferred longitudes of $T$ with the preferred longitudes of $L$ and the meridians of $T$ with the meridians of $L$.  Observe that $Z$ is a cobordism from $M_{R(L)}$ to $M_R \sqcup M_L$.  The cobordism $Z$ is the {\em crucial cobordism}.
\end{defn}

\begin{figure}[h!]

\includegraphics[ width = 8cm]{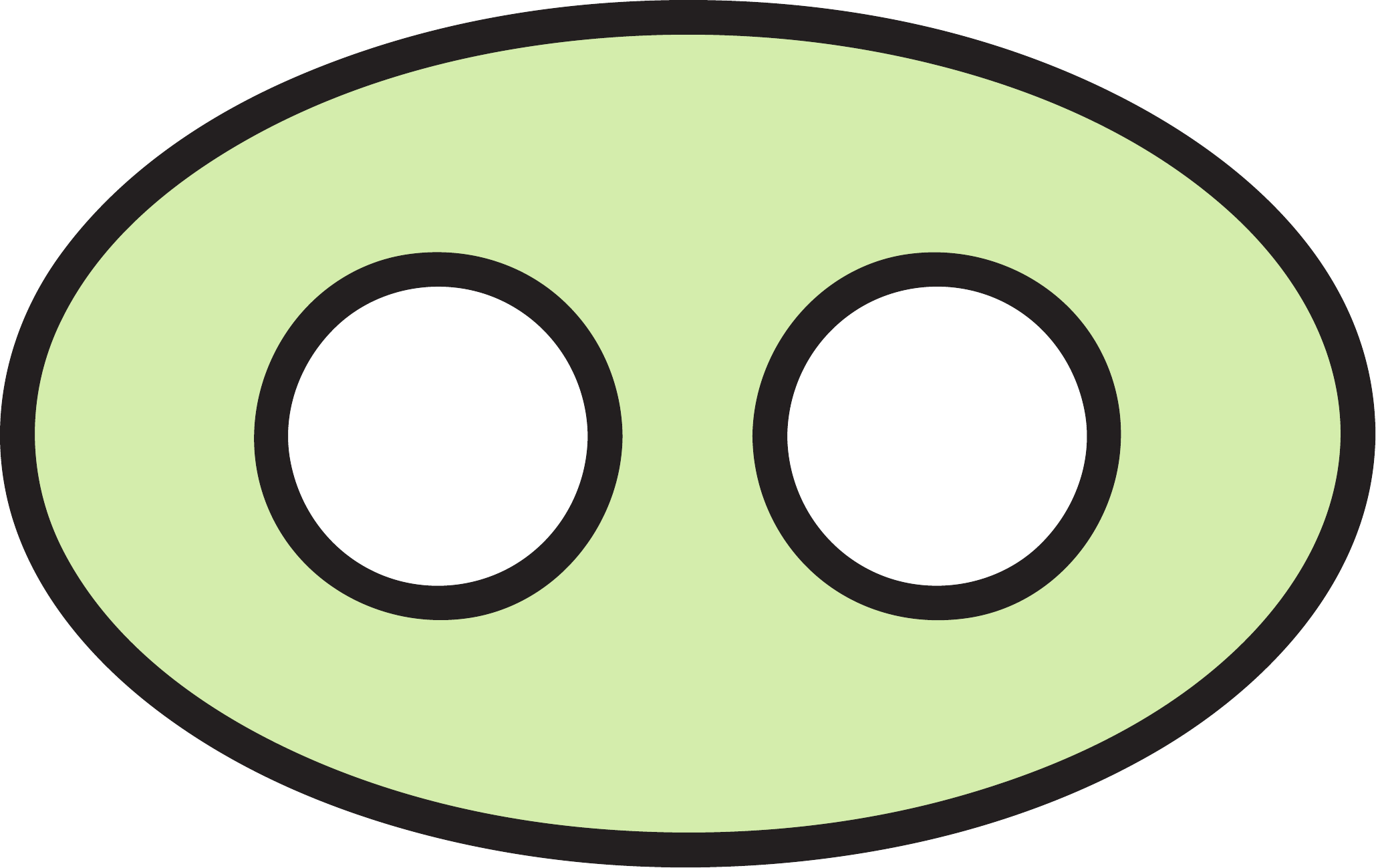}
\put(-170,70){$M_R$}
\put(-70,70){$M_L$}
\put(-200,0){$M_{R_T(L)}$}
\caption{The Crucial Cobordism}
\label{fig:CrucialCobordism}
\end{figure}

We prove a lemma about the fundamental group of the cobordism constructed in Definition \ref{defn:cobordismConstruction}.  

\begin{lem}\label{lem:cobordismPi}
Let $Z$ denote the cobordism constructed in Definition \ref{defn:cobordismConstruction} with $R,L$ as above and assume that $f_*(\pi_1(T)) \subset \pi_1(M_R)^{(1)}$.  Then $\pi_1(Z)$ is normally generated by the image of $\pi_1(M_{R(L)})$ under the map induced by inclusion and therefore normally generated by $\pi_1(R)$.  Also $\pi_1(D^2 \times I \backslash L) \hookrightarrow \pi_1(Z)^{(1)}$.
\end{lem}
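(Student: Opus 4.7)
The plan is to analyze $\pi_1(Z)$ via Seifert--van Kampen applied to the decomposition $Z = (M_R \times [0,1]) \cup (M_L \times [0,1])$, whose intersection is the handlebody $T$, identified by $f$ with the handlebody $H$ used to build $M_L = E(L) \cup_\Sigma H$. Since $T$ is the exterior of the trivial $n$-string link, $\pi_1(T) \cong F_n$ is generated by the meridians $\mu_1^T,\dots,\mu_n^T$; similarly $\pi_1(H)$ is free of rank $n$ on the meridians $\mu_i^L$ of $L$, and the gluing $f$ identifies $\mu_i^T$ with $\mu_i^L$. Van Kampen then yields
\[
\pi_1(Z) \ \cong\ \pi_1(M_R) *_{F_n} \pi_1(M_L).
\]

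For the first conclusion, I would show that every generator of $\pi_1(M_R)$ and of $\pi_1(M_L)$ already lies in the subgroup of $\pi_1(Z)$ generated by the image of $\pi_1(M_{R(L)})$. Since $M_R \setminus T \subset M_{R(L)}$, the subgroup $\pi_1(M_R \setminus T) \subset \pi_1(M_R)$ factors through the image of $\pi_1(M_{R(L)})$; the remaining generators of $\pi_1(M_R)$ are the $\mu_i^T$, and by the amalgamation each one equals $\mu_i^L \in \pi_1(E(L)) \subset \pi_1(M_{R(L)})$. Symmetrically, $\pi_1(M_L)$ is a quotient of $\pi_1(E(L))$, which also sits inside $\pi_1(M_{R(L)})$. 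So the image of $\pi_1(M_{R(L)})$ actually generates $\pi_1(Z)$, which is stronger than normal generation. Because $R(L)$ is a knot, $\pi_1(M_{R(L)})$ is normally generated by a meridian $\mu$ of $R(L)$; choosing $\mu$ disjoint from $T$, it is simultaneously a meridian of $R$ and lies in $\pi_1(M_R) \subset \pi_1(Z)$. As $\mu$ normally generates $\pi_1(R)$, the normal closure of $\pi_1(R)$ in $\pi_1(Z)$ contains $\mu$, hence $\pi_1(M_{R(L)})$, hence all of $\pi_1(Z)$.

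For the second conclusion, I would reduce to showing that the image of $\pi_1(E(L))$ vanishes in $H_1(Z)$. Because $H_1(E(L)) \cong \Z^n$ is generated by the meridians $\mu_i^L$, it suffices to check that each $\mu_i^L$ goes to $0$ in $H_1(Z)$. Under the amalgamation, $\mu_i^L = \mu_i^T$ in $\pi_1(Z)$, and the standing hypothesis $f_*(\pi_1(T)) \subset \pi_1(M_R)^{(1)}$ forces $\mu_i^T$, and hence $\mu_i^L$, to be trivial in $H_1(M_R)$ and therefore, via inclusion, in $H_1(Z)$. Thus the image of $\pi_1(E(L))$ lies in $\pi_1(Z)^{(1)}$. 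The main subtlety is the symbol $\hookrightarrow$: strictly speaking, the preferred longitudes $\lambda_i^L$ bound disks inside $T \subset M_R$, so they are nullhomotopic in $Z$ and the map from $\pi_1(E(L))$ is not literally injective. The honest injection provided by the amalgamated product is $\pi_1(M_L) \hookrightarrow \pi_1(Z)$, and the $H_1$-computation above applies to it verbatim. Keeping basepoints coherent across $M_R$, $M_L$, and $M_{R(L)}$ is the only bookkeeping obstacle: fixing a single basepoint in $T$ makes all three inclusions land inside the amalgamation without further choices.
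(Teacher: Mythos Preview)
Your argument follows the paper's own proof: both apply Seifert--van Kampen to the decomposition $U=M_R\times I$, $V=M_L\times I$ with intersection the handlebody $T\cong H$, and both deduce the second assertion from the hypothesis $f_*(\pi_1(T))\subset\pi_1(M_R)^{(1)}$ together with the fact that the meridians of $T$ normally generate $\pi_1(M_L)$. The paper's proof is a two-line sketch; you have supplied the details it omits, and your observation about the symbol $\hookrightarrow$ is well taken---the honest injection furnished by the pushout is $\pi_1(M_L)\hookrightarrow\pi_1(Z)$, since the preferred longitudes $\lambda_i^L$ die in $T\subset M_R$, and the paper's notation is slightly loose on this point.
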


\begin{proof} For the first part, apply the Seifert van Kampen Theorem with $U = M_R \times I$ and $V = M_L \times I$.  The second part follows since $f_*(\pi_1(T)) \subset \pi_1(M_R)^{(1)}$ and since the meridians of $T$ normally generate $\pi_1(M_L)$\end{proof}

\begin{lem}{\citep[Lemma 2.1]{JB14}}\label{lem:cobordismH}
Let $Z$ denote the cobordism constructed in Definition \ref{defn:cobordismConstruction} with $R,L$ as above and assume that $f_*(\pi_1(T)) \subset \pi_1(R)^{(1)}$.  Then $H_1(Z; \Z) \cong H_1(M_R; \Z)$ and $H_2(Z;\Z) \cong H_2(M_R;\Z) \oplus H_2(M_L;\Z)$
\end{lem}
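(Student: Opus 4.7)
The plan is to apply Mayer--Vietoris to the decomposition $Z = (M_R \times I) \cup (M_L \times I)$, with intersection the region $T$ along which the gluing $f$ is performed. First I record the homology of this intersection: since $T$ is the exterior of the trivial $n$-component string link in $D^2 \times I$, it is a genus-$n$ handlebody, so $H_0(T) = \Z$, $H_1(T) = \Z^n$ freely generated by the meridians $\mu_i$, and $H_k(T) = 0$ for $k \geq 2$. Because $M_R \times I \simeq M_R$ and $M_L \times I \simeq M_L$, Mayer--Vietoris gives the exact sequence
\[
0 \to H_2(M_R) \oplus H_2(M_L) \to H_2(Z) \to H_1(T) \xrightarrow{\Phi} H_1(M_R) \oplus H_1(M_L) \to H_1(Z) \to H_0(T) \to H_0(M_R) \oplus H_0(M_L).
\]

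The heart of the argument is analyzing $\Phi$. On the $M_R$ coordinate, the hypothesis $f_*(\pi_1(T)) \subset \pi_1(R)^{(1)}$ places the image of $\pi_1(T)$ inside the commutator subgroup, so the induced map $H_1(T) \to H_1(M_R)$ is zero. On the $M_L$ coordinate, $f$ was defined to identify the meridian $\mu_i$ of $T$ with the meridian of the $i$-th component of $L$; since the components of $L$ have pairwise linking number zero, these meridians form a free $\Z$-basis of $H_1(M_L) \cong \Z^n$, and this restriction of $\Phi$ is an isomorphism. Hence $\Phi = (0, \cong)$ is injective with cokernel canonically identified with $H_1(M_R)$.

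Both conclusions now follow formally. Injectivity of $\Phi$ forces the connecting map out of $H_2(Z)$ to vanish, giving $H_2(Z) \cong H_2(M_R) \oplus H_2(M_L)$. For $H_1$, the degree-zero tail of Mayer--Vietoris reads $\Z \to \Z \oplus \Z$ sending $1 \mapsto (1,1)$, which is injective, so by exactness the map $H_1(Z) \to H_0(T)$ is zero; hence $H_1(Z) \cong \operatorname{coker}(\Phi) \cong H_1(M_R)$. The step that most needs care, and what I expect to be the main obstacle, is the identification of the meridians of $T$ with a $\Z$-basis of $H_1(M_L)$: this uses both the handlebody structure of $T$ and the linking-number-zero hypothesis, which ensures that attaching disks along the preferred longitudes in the construction of $M_L$ imposes no new relations among meridians.
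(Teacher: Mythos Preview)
Your proof is correct and is the standard Mayer--Vietoris argument for this decomposition. The paper itself does not supply a proof of this lemma, citing it from \citep[Lemma 2.1]{JB14}; however, the essentially identical computation appears in the paper's proof of Proposition~\ref{prop:Solvable}, where the same facts---that $T$ is homotopy equivalent to a wedge of circles, that $H_1(T)\to H_1(M_L)$ is an isomorphism via the meridians, and that $H_1(T)\to H_1(M_R)$ is zero by the commutator hypothesis---are used in exactly the way you describe.
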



For infections the particular choice of embedding of $T$ forces some bounds on the solvability.  These bounds come from the image of $\pi_1(T)$ in $\pi_1(S^3-R)$.  This is made precise in Proposition \ref{prop:Solvable}.  

\begin{prop}\label{prop:Solvable}
Let $*$ be a weakly functorial commutator series.  Let $R$ be an $(n,*)$-solvable string link, let $T$ be as in Definition \ref{def:infectRbySL} and let $L$ be an $m$-solvable string link.  Also assume $m,n \geq 0$.  If $f_*(\pi_1(T)) \subset {\pi_1}(M_R)_*^{(k)}$ with $k > 0$ then $R_T(L)$ is $(min(n,k+m),*)$-solvable.
\end{prop}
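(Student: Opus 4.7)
The plan is to construct an explicit $(\min(n,k+m),*)$-solution $W$ for $R_T(L)$. Let $V_R$ be a given $(n,*)$-solution for $R$ and $V_L$ a given $m$-solution for $L$, and let $Z$ be the crucial cobordism of Definition \ref{defn:cobordismConstruction}. I would set $W = V_R \cup_{M_R} Z \cup_{M_L} V_L$; this is a compact smooth spin 4-manifold with $\partial W = M_{R_T(L)}$, and I will verify each clause of Definition \ref{def:filtration}.

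Clauses (1) and (2) are bookkeeping. A Mayer--Vietoris computation, combined with Lemma \ref{lem:cobordismH} and the $H_1$-isomorphisms satisfied by $V_R$ and $V_L$, yields $H_1(M_{R_T(L)}) \cong H_1(W)$ and $H_2(W) \cong H_2(V_R) \oplus H_2(V_L)$; the prescribed dual surfaces inside $V_R$ and $V_L$, pushed into $W$, assemble into the required symplectic basis of $H_2(W)$.

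Clause (3) is where the real work happens. For the surfaces living inside $V_R$, the inclusion $V_R \hookrightarrow W$ induces an isomorphism on $H_1(-;\Q)$ (visible from the same Mayer--Vietoris computation), so weak functoriality of $*$ sends $\pi_1(V_R)_*^{(n)}$ into $\pi_1(W)_*^{(n)} \subseteq \pi_1(W)_*^{(\min(n,k+m))}$. For the surfaces inside $V_L$, whose fundamental groups sit in $\pi_1(V_L)^{(m)}$, I would show that the image of $\pi_1(V_L) \to \pi_1(W)$ already lies in $\pi_1(W)_*^{(k)}$; by Lemma \ref{lem:usefulLem} this forces the surface fundamental groups into $(\pi_1(W)_*^{(k)})^{(m)} \subseteq \pi_1(W)_*^{(k+m)} \subseteq \pi_1(W)_*^{(\min(n,k+m))}$.

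The main obstacle is proving that $\pi_1(V_L)$ maps into $\pi_1(W)_*^{(k)}$, because the inclusion $V_L \hookrightarrow W$ is not an $H_1(-;\Q)$-isomorphism and weak functoriality is unavailable directly. My plan is to argue indirectly: the meridians of $L$, which generate $\pi_1(M_L)$, are identified in $\pi_1(W)$ with meridians of $T$ in $M_R$, which by hypothesis lie in $f_*(\pi_1(T)) \subseteq \pi_1(M_R)_*^{(k)}$; the $H_1(-;\Q)$-isomorphism $M_R \hookrightarrow W$ together with weak functoriality then pushes them into $\pi_1(W)_*^{(k)}$. Thus the image of $\pi_1(M_L)$ sits in the normal subgroup $\pi_1(W)_*^{(k)}$. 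The induced map $\pi_1(V_L) \to \pi_1(W)/\pi_1(W)_*^{(k)}$ then factors through $\pi_1(V_L)/N$, where $N$ is the normal closure of the meridians of $L$ in $\pi_1(V_L)$. Because $H_1(M_L) \to H_1(V_L)$ is an isomorphism the meridians generate $H_1(V_L)$, so $\pi_1(V_L)/N$ is a perfect group. On the other hand, $\pi_1(W)/\pi_1(W)_*^{(k)}$ is poly-torsion-free-abelian and hence admits no nontrivial perfect subgroup (an easy induction on the length of the subnormal series). The factored map is therefore trivial, completing the argument.
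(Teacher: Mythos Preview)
Your proposal is correct and follows essentially the same strategy as the paper: build the candidate solution by gluing the crucial cobordism to the given solutions, verify conditions (1)--(2) via Mayer--Vietoris and Lemma~\ref{lem:cobordismH}, handle the $V_R$-surfaces by weak functoriality, and for the $V_L$-surfaces reduce to showing $\pi_1(V_L)$ lands in $\pi_1(W)_*^{(k)}$ and then apply Lemma~\ref{lem:usefulLem}. The only point of divergence is in that last reduction: the paper quotes \citep[Lemma~6.5]{CHL08} to say that $\pi_1(T)$ normally generates $\pi_1(V_L)$ modulo any derived term $\pi_1(V_L)^{(l)}$, chooses $l$ large, and checks directly that conjugates of meridians together with elements of $\pi_1(V_L)^{(l)}$ land in $\pi_1(C)_*^{(k)}$; you instead observe that $\pi_1(V_L)/N$ is perfect and that $\pi_1(W)/\pi_1(W)_*^{(k)}$ is PTFA, hence contains no nontrivial perfect subgroup. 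These are two phrasings of the same mechanism---your version is a touch more conceptual and avoids the auxiliary choice of $l$, while the paper's version makes the normal-generation step explicit---but neither buys anything the other does not.
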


\begin{proof} Let $t = min(n,k+m)$, we construct the $(k,*)$-solution as follows.  Let $Z$ be the crucial cobordism from Definition \ref{defn:cobordismConstruction}.  Let $W_R$ denote the $(n,*)$-solution for $M_R$ and $W_L$ denote the $m$-solution $M_L$ and let $C = Z \sqcup W_L \sqcup W_R$ where we attach $W_R$ to $M_R \times \{1\}$ and attach $W_L$ to $M_L \times \{1\}$.  Let $U = W_R \cup M_R \times [0,1]$ and $V = W_L \cup M_L \times [0,1]$.  The reduced Mayer-Vietoris sequence gives the following exact sequence, $$\widetilde H_2(T) \rightarrow \widetilde H_2(U) \oplus \widetilde H_2(V) \rightarrow \widetilde H_2(C) \rightarrow \widetilde H_1(T) \rightarrow \widetilde H_1(U) \oplus \widetilde H_1(V) \rightarrow \widetilde H_1(C) \rightarrow \widetilde H_0(T).$$  Since $T$ is homotopy equivalent to a wedge of circles we see that $\widetilde H_2(T) = \widetilde H_0(T) = 0$ so we have the sequence $$0 \rightarrow \widetilde H_2(U) \oplus \widetilde H_2(V) \rightarrow \widetilde H_2(C) \rightarrow \widetilde H_1(T) \rightarrow \widetilde H_1(U) \oplus \widetilde H_1(V) \rightarrow \widetilde H_1(C) \rightarrow 0.$$  Notice that $(f^{-1})^* : H_1(T) \rightarrow  H_1(M_L)$ is an isomorphism by construction and that $f_*:H_1(T) \rightarrow H_1(M_R)$ is the zero map since $k > 0$.  Therefore $H_1(T) \cong H_1(M_L) \cong H_1(V)$ and $H_1(T) \rightarrow H_1(U)$ is the zero map because $W_R$ and $W_L$ are $(n,*)$-solutions and $m$-solutions respectively.  Then $H_1(C) \cong H_1(M_R)$ from the exact sequence.  We also see that $H_2(C)$ is generated by the classes in $H_2(U)$ and $H_2(V)$ from the exact sequence.

We go over the conditions of solvability.  Under inclusion the meridian that generates $H_1(M_R)$ is isotopic to the meridian that generates $H_1(M_{R_T(L}))$.  Therefore $H_1(M_R)$ is isomorphic to $H_1(M_{R_T(L)})$ under the inclusion.  It also follows that $H_1(C)$ is isomorphic to $H_1(M_{R_T(L)})$.  For the second condition in Definition \ref{def:filtration} take $L_{i,R},D_{i,R}$ to be the surfaces corresponding to the $(n,*)$ solution $W_R$ and $L_{j,L},D_{j,L}$ to be the surfaces corresponding to the $m$-solutions $W_L$.  Since $H_2(C) \cong H_2(U) \oplus H_2(V) \cong H_2(W_R) \oplus H_2(W_L)$ we have that the set $ \{L_{i,R},D_{i,R},L_{j,L},D_{j,L}\}$ forms a basis for $H_2(C)$ satisfying the second condition because the surfaces come from an $(n,*)$-solution and an $m$-solution.  For the third condition in Definition \ref{def:filtration} we examine the fundamental group of the surfaces.

We have shown that $\pi_1(W_R)/\pi_1(W_R)^{(1)} \cong \pi_1(C)/\pi_1(C)^{(1)}$ so by weak functorality of $*$, $\pi_1(W_R)_*^{(k)} \subset \pi_1(C)_*^{(k)}$ for all $k$.  Therefore $\pi_1(L_{i,R}) \subset \pi_1(W_R)_*^{(n)} \subset \pi_1(C)_*^{(n)}$.  The same proof shows that $\pi_1(D_{i,R}) \subset \pi_1(C)_*^{(n)}$.

We claim that $\pi_1(W_L) \subset \pi_1(C)_*^{(k)}$.  If the claim is true then by functorality of the derived series we have $\pi_1(W_L)^{(l)} \subset (\pi_1(C)_*^{(k)})^{(l)}$ and by Proposition \ref{lem:usefulLem} we have $(\pi_1(C)_*^{(k)})^{(l)} \subset \pi_1(C)_*^{(k+l)}$.  We can conclude that $\pi_1(L_{j,L})$ and $\pi_1(D_{j,L})$ are subsets of $\pi_1(W_L)^{(m)}$ which is a subset of  $\pi_1(C)_*^{(k+m)}$.  Therefore each of $\pi_1(L_{i,R})$, $\pi_1(D_{i,R})$, $\pi_1(L_{i,L})$, and $\pi_1(D_{i,L})$ is a subset of $\pi_1(C)_*^{(min(n,k+m))}$.

We need to the following result to prove the claim.  Proving the claim finishes the proof.

\begin{lem}{\citep[Lemma 6.5]{CHL08}}\label{lem:CHL08} Suppose $\phi : A \rightarrow B$ is a group homomorphishm that is surjective on abelianizations.  Then for any positive integer $k$, $\phi(A)$ normally generates $B/B^{(k)}$.\end{lem}  

Observe that $\pi_1(T)$ normally generates $\pi_1(M_L)$ under inclusion so the induced map on abelianizations is surjective. Applying Lemma \ref{lem:CHL08} one sees that $\pi_1(T)$ normally generates $\pi_1(W_L)/\pi_1(W_L)^{(l)}$.  By the hypothesis $f_*(\pi_1(T)) \subset \pi_1(M_R)_*^{(k)}$  By weak functorality of $*$, $f_*(\pi_1(T))  \subset \pi_1(C)_*^{(k)}$.  By abuse of notation $\pi_1(T)$ normally generates $\pi_1(W_L)$ up to elements in $\pi_1(W_L)^{(l)}$ for a fixed $l$ which can be chosen.  Choose $l$ such that $l > n$ and $l > k+m$; then a generating set for $\pi_1(W_L)$ is given by curves $\gamma = \eta \delta \eta^{-1} \beta$ where $\eta \in \pi_1(W_L)$ $\beta \in \pi_1(W_L)^l$ and $\delta \in \pi_1(T) \subset \pi_1(C)_*^{(k)}$.  Since we use a normal series it follows that $\eta \delta \eta^{-1} \in   \pi_1(C)_*^{(k)}$.  By functorality of the derived series $\beta \in \pi_1(C)^{(l)} \subset \pi_1(C)_*^{(l)}$ which implies that $\gamma \in \pi_1(C)_*^{(k)}$.  Since the $\gamma$ curves are generators for $\pi_1(W_L)$ this shows the claim\end{proof}

With respect to Proposition \ref{prop:Solvable}, we would like to strengthen the lemma by having $L$ be $(m,*)$-solvable.  This can only be guaranteed when $(G^{(1)}_*)^{(b)}_*$ is a subset of $G^{(a+b)}_*$, which is not necessarily true.  Additionaly if $R$ is a slice knot then $R_T(L)$ is $(m+k)$-solvable which was shown in \citep{JB14} and \citep{CHL11}.

\begin{defn}\label{defn:torOp}
We say that an infection $R_T$ is a {\em torsion doubling operator} if $R$ is a ribbon link and the image of $H_1(T;\Z)$ under inclusion is a subset of $T\cA(R)$.
\end{defn}

\begin{defn}\label{defn:robustTorOp}
We say that a torsion doubling operator $R_T$ is {\em robust} if for each generator $\eta_i$ of $H_1(T;\Z)$, we have that $\blf(\eta_i,\eta_i) \neq 0$ and for any choice of $c_i \in \Z$, $\blf(\Sigma c_i \eta_i, \Sigma c_i \eta_i) = 0$ implies that $c_i = 0$.
\end{defn}

Next we outline the arguments of \citep{CHL11} and \citep{JB14} to isolate specific infections in different filtrations.

\begin{defn}\citep[Definition 5.1]{JB14}\label{defn:multiCoprime}
Given 
$$P = (p_1(t_1,\dots,t_n),\dots,p_n(t_1,\dots,t_n)), \quad Q  = (q_1(t_1,\dots,t_n),\dots,q_n(t_1,\dots,t_n)),$$ 
we say that $P$ is {\em strongly coprime} to $Q$ if, for some $k \geq 1$ $\widetilde{(p_k,q_k)} = 1$; otherwise we say that $P$ is {\em isogenous} to $Q$

\end{defn}

Observe that given $P = (p_1,\dots,p_n)$ the derived series localized at $P$ (Definition \ref{defn:pSeries}) gives us a commutator series.  When the $P$ is specified when we say $(n,P)$ we are using the associated commutator series.

\begin{defn}\label{defn:vectorCoprime}
We say that $p(t_1,\dots,t_n)$ is {\em strongly coprime} to the vector $$Q = (q_1(t_1,\dots,t_n),\dots,q_n(t_1,\dots,t_n)),$$ if $p$ and $q_i$ are strongly coprime for all $i$.
\end{defn}

Theorem \ref{lem:Triviality} is a theorem that tells us when a knot concordance class is trivial as an element of $\F_{n}^P/\F_{n+1}^P$.  We know that if we perform an iterated doubling operator $n$ times, the result is knot which is $n$-solvable.  What Theorem \ref{lem:Triviality} says is that under this new filtration if the higher order Alexander modules do not support a certain type of torsion then they become $(n+1,*)$-solvable.  The converse is not necessarily true.

\begin{thm}  \label{lem:Triviality}{\citep[Thm 5.2]{JB14}}
Let $L = R_{T_n}^n \circ \cdots \circ R_{T_1}^1(L_0)$ with $L_0 \in \F_0^{m}$ where each $R_{T_j}^j$ is a torsion doubling operator.  Let $Q = (q_n(t_1,\dots,t_n),\dots,q_1(t_1,\dots,t_n))$ where each $q_k(t_1,\dots,t_n) \neq 0$ annihilates $H_1(T_k)$ in $T\cA(R_k)$.  If the vector $P = (p_1(t_1,\dots,t_n),\dots,p_n(t_1,\dots,t_n))$ is strongly coprime to $Q$ then $L \in \F_{n+1}^{P}$
\end{thm}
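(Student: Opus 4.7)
The plan is to induct on $n$, at each stage building a $(j+1, P)$-solution for $L_j = R_{T_j}^j \circ \cdots \circ R_{T_1}^1(L_0)$ out of (a) the crucial cobordisms of Definition~\ref{defn:cobordismConstruction}, (b) the ribbon disk exteriors of each $R_k$ (which exist because a torsion doubling operator has $R$ ribbon), and (c) the $m$-solution $W_0$ for $L_0$. The base case $n=0$ is immediate because $L_0 \in \F_0^m \subset \F_0^P \subset \F_1^P$ for $m\geq 0$; alternatively one starts the induction at $n=1$ by observing that a ribbon knot is $n$-solvable for every $n$ via its ribbon disk exterior.

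For the inductive step, assume $L_{j-1}$ admits a $(j,P)$-solution $W_{j-1}$. Stack onto $M_{L_j}$ the crucial cobordism $Z_j$ from $M_{L_j}=M_{R_j(L_{j-1})}$ to $M_{R_j}\sqcup M_{L_{j-1}}$, cap $M_{R_j}$ by the ribbon disk exterior $V_j$ (this is a $0$-solution for any ribbon link by standard arguments: the ribbon disk complement has $H_1 = \Z^{r}$ generated by meridians and a collection of dual surfaces $L_i, D_i$ in $\pi_1(V_j)^{(1)}$), and cap $M_{L_{j-1}}$ by $W_{j-1}$. Call the result $W_j$. By Lemma~\ref{lem:cobordismH} and an easy Mayer--Vietoris computation, $H_1(M_{L_j})\to H_1(W_j)$ is an isomorphism and $H_2(W_j)$ is generated by the surfaces coming from $V_j$ and $W_{j-1}$ with the required dual structure.

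The heart of the argument is to check the third condition: every generating surface $\Sigma$ of $H_2(W_j)$ has $\pi_1(\Sigma)\subset \pi_1(W_j)_P^{(j+1)}$. The surfaces coming from $W_{j-1}$ live in $\pi_1(W_{j-1})_P^{(j)}$ by hypothesis, and by the same kind of weak-functoriality / normal-generation argument used in the proof of Proposition~\ref{prop:Solvable} (invoking Lemma~\ref{lem:CHL08}, the fact that $\pi_1(T_j)\subset \pi_1(M_{R_j})^{(1)}$, and Lemma~\ref{lem:usefulLem}) their images sit in $\pi_1(W_j)_P^{(j+1)}$; here one uses that the meridians of $L_{j-1}$ inside $W_{j-1}$ are identified with the $\eta$-curves $\pi_1(T_j)$, which we will show lie in $\pi_1(W_j)_P^{(1)}$ (in fact in a deeper term). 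The surfaces from $V_j$ have fundamental group in $\pi_1(V_j)^{(1)}$, so it suffices to show $\pi_1(V_j) \subset \pi_1(W_j)_P^{(j)}$, which reduces by the inductive hypothesis and functoriality to a single height-raising statement at each level.

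The key computation, and the main obstacle, is the following: for the meridian curves $\eta$ of $T_j$ viewed inside $V_j\subset W_j$, one must show that $\eta \in \pi_1(W_j)_P^{(j+1)}$, i.e.\ that $\eta$ dies in $\pi_1(W_j)_P^{(j)}/\pi_1(W_j)_P^{(j+1)}$ after tensoring with the localized group ring $\Q[\pi_1(W_j)/\pi_1(W_j)_P^{(j)}]S_{p_j}^{-1}$. Because $R_j$ is a ribbon link and $\eta$ lies in the torsion Alexander module $T\cA(R_j)$, the element $q_j(t_1,\dots,t_n)$ annihilates the image of $\eta$ in the appropriate higher-order Alexander module (this is where the choice of $q_j$ and Definition~\ref{defn:torOp} enter, together with the standard fact that ribbon disk exteriors realize the torsion Alexander module as a quotient of $H_1$ of the infinite cyclic cover). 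Since $P$ is strongly coprime to $Q$, we have $\widetilde{(p_j,q_j)}=1$, so $q_j$, when evaluated on the generators of the abelian quotient $\pi_1(W_j)_P^{(j-1)}/\pi_1(W_j)_P^{(j)}$, becomes a unit in the localization at $S_{p_j}$. Thus $\eta$ maps to zero in the localized abelianization, i.e.\ $\eta \in \pi_1(W_j)_P^{(j+1)}$, as desired. The delicate technical point here — and where the proof requires genuine care — is verifying that $q_j$ really does become an element of $S_{p_j}^{G/G_P^{(j)},G_P^{(j-1)}/G_P^{(j)}}$ once one substitutes the correct abelian generators (meridians of $T_j$) coming from the doubling tower; this uses Definition~\ref{defn:multiCoprime} and the identification of the abelian quotient at level $j$ with a subgroup of $\pi_1(D^2\times I\setminus L_j)^{\mathrm{ab}}\otimes \Q$, via the inclusion of Lemma~\ref{lem:cobordismPi}. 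Granting this, $W_j$ is a $(j+1,P)$-solution, completing the induction and yielding $L\in \F_{n+1}^P$.
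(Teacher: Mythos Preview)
First, a framing point: the paper does not supply its own proof of this statement --- it is quoted verbatim from Burke \citep[Thm~5.2]{JB14} and left unproved here. So there is no in-paper argument to compare against. That said, your proposal has genuine gaps relative to what the argument actually requires.

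The most serious is a misreading of the hypothesis. By Definition~\ref{defn:multiCoprime}, ``$P$ strongly coprime to $Q$'' means $\widetilde{(p_k,q_k)}=1$ for \emph{some} $k$, not for every index; and since the theorem writes $Q=(q_n,\dots,q_1)$ in reverse order, the $k$-th entry of $Q$ is $q_{n-k+1}$, so what you are given is a single $k$ with $p_k$ strongly coprime to $q_{n-k+1}$. Your induction, by contrast, invokes $\widetilde{(p_j,q_j)}=1$ at every stage $j$ --- both stronger than the hypothesis and mis-indexed. Relatedly, your base case asserts $\F_0^P\subset\F_1^P$, but the filtration is nested the other way ($\F_{n+1}^*\subset\F_n^*$), so $L_0\in\F_0$ does not yield $L_0\in\F_1^P$.

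More structurally, the inductive step cannot work as written even if you had coprimality everywhere. You want to carry surfaces from $\pi_1(W_{j-1})_P^{(j)}$ into $\pi_1(W_j)_P^{(j+1)}$, but the $P$-series of $\pi_1(W_{j-1})$ and $\pi_1(W_j)$ are computed in different groups with different level-$0$ quotients: in $W_{j-1}$ the meridians of $L_{j-1}$ sit at level $0$, while in $W_j$ they sit at level $1$. Lemma~\ref{lem:usefulLem} only gives $(G_P^{(i)})^{(k)}\subset G_P^{(i+k)}$ with the \emph{ordinary} derived series on the inside, and there is no inclusion $(G_P^{(1)})_P^{(j)}\subset G_P^{(j+1)}$ in general --- this is precisely the obstruction flagged in the remark after Proposition~\ref{prop:Solvable}. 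Weak functoriality does not rescue you either, since $W_{j-1}\hookrightarrow W_j$ is not an $H_1(-;\Q)$-isomorphism. Finally, your ``key claim'' that the meridians of $T_j$ lie in $\pi_1(W_j)_P^{(j+1)}$ is too strong: in $W_j$ those curves sit at level $1$, and a single annihilation by $q_j$ can push them to level $2$ at best.

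The approach that does work (Burke's) is not inductive in your sense. One assembles a single $4$-manifold $W$ by stacking all $n$ crucial cobordisms, capping each $M_{R_j}$ with its ribbon-disk exterior (these have $H_2=0$, so contribute no surfaces), and capping $M_{L_0}$ with a $0$-solution $W_0$. All Lagrangian/dual surfaces then come from $W_0$, and the task is to show $\pi_1(W_0)\subset\pi_1(W)_P^{(n+1)}$. Tracking meridians through the tower via Lemma~\ref{lem:usefulLem} and the normal-generation argument of Proposition~\ref{prop:Solvable} places the meridians of $T_j$ in $\pi_1(W)_P^{(n-j+1)}$. At the single critical level $k$, the annihilator $q_{n-k+1}$ evaluated on the meridians of $R_{n-k+1}$ (which lie in $\pi_1(W)_P^{(k-1)}/\pi_1(W)_P^{(k)}$) lands in $S_{p_k}$ by strong coprimality, so those $\eta$-curves die in the localization and drop to $\pi_1(W)_P^{(k+1)}$. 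This one-level gain then propagates down the tower, forcing the meridians of $T_1$ --- hence all of $\pi_1(W_0)$ --- into $\pi_1(W)_P^{(n+1)}$, which makes $W$ an $(n{+}1,P)$-solution.
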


 \begin{lem}\label{lem:knotFam}
 For any $m > 0$, there exists a ribbon knot $J_m$ such that for each $1 \leq i \leq m$ there is an $\eta_i \in T\cA(J_m)$ such that the set $\{\eta_i\}$ is linearly independent over $\Z$, $\blf(\eta_i,\eta_i) \neq 0$ and $\blf(\eta_i,\eta_j) = 0$ for $i \neq j$.
 \end{lem}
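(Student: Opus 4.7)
The natural strategy is to take $J_m$ to be a connected sum of $m$ copies of a single carefully chosen ribbon knot, exploiting the additivity of the Blanchfield form under connected sum.

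First, I would establish the existence of one ribbon knot $K$ equipped with an element $\eta_0 \in T\cA(K)$ of infinite additive order satisfying $\blf_K(\eta_0,\eta_0) \neq 0$. The stevedore knot $6_1$ is a classical choice: its Alexander polynomial factors as $(2t-1)(t-2)$, and although its Blanchfield form is hyperbolic (since $6_1$ is ribbon), a direct computation on the cyclic Alexander module $\Z[t^{\pm 1}]/(2t^2-5t+2)$ produces an element with nonzero self-pairing. Alternatively, one can run a one-variable analog of the construction in Proposition \ref{prop:ExistenceOfLink} and its companion Blanchfield calculation in Corollary \ref{cor:NonTrivBlf}; the Alexander polynomial of the resulting ribbon knot factors as $p(t)p(t^{-1})$ with $p$ strongly irreducible, and the argument of Corollary \ref{cor:NonTrivBlf} (which only uses that the relevant localized ring is a PID and that $K\Gamma/S^{-1}\Lambda$ is divisible) goes through verbatim to show the form is nontrivial on the diagonal.

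Next, set $J_m := K \# K \# \cdots \# K$ ($m$ factors). The connected sum of ribbon knots is ribbon (stack the ribbon disks along a properly embedded arc so that only index $0$ and index $1$ critical points appear), so $J_m$ is a ribbon knot. A standard Mayer--Vietoris argument applied to the decomposition of $M_{J_m}$ along the meridional $2$-spheres separating the summands produces $T\cA(J_m) \cong \bigoplus_{i=1}^m T\cA(K)$ as $\Z[t^{\pm1}]$-modules, and the classical Blanchfield form on a connected sum splits as an orthogonal direct sum $\blf_{J_m} = \bigoplus_{i=1}^m \blf_K$.

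Finally, let $\eta_i$ be the image of $\eta_0$ in the $i$-th summand. The orthogonal splitting immediately gives $\blf_{J_m}(\eta_i,\eta_i) = \blf_K(\eta_0,\eta_0) \neq 0$ and $\blf_{J_m}(\eta_i,\eta_j) = 0$ for $i \neq j$. For $\Z$-linear independence, suppose $\sum_i c_i \eta_i = 0$ in $\bigoplus_{i=1}^m T\cA(K)$; then $c_i \eta_0 = 0$ in each summand, and since $\eta_0$ has infinite additive order, each $c_i = 0$. The principal obstacle is the first step: exhibiting a single ribbon knot whose Blanchfield form has a nontrivial self-pairing on an infinite-order class. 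Once that input is in hand, everything else reduces to the additivity of $T\cA$ and $\blf$ under connected sum, so the bulk of the work is really in the selection/construction of the seed knot $K$.
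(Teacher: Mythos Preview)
Your argument is correct for the lemma as stated: taking $m$ copies of a single ribbon knot with cyclic Alexander module gives the orthogonal splitting of $\blf$ and $\Z$-linear independence exactly as you describe. The paper, however, makes a different choice: it sets $J_m = K_1 \# \cdots \# K_m$ where the $K_i$ are \emph{distinct} ribbon knots (the twist knots of \citep[Example~4.10]{CHL11}) whose Alexander polynomials $\Delta_{K_i}$ are pairwise strongly coprime, and it deduces both orthogonality and independence by localizing at each $\Delta_{K_i}$ in turn. For the lemma in isolation your approach is actually simpler, since it avoids invoking strong coprimality. The payoff of the paper's choice comes later, in the proof of Theorem~\ref{thm:nonTriviality}: there one must verify that $(R_n)_{T_n}$ is a robust torsion doubling operator \emph{after} passing to the localized coefficients $\R_n = \Q[\Lambda_n]S_n^{-1}$, and the argument given there localizes at the individual $\Delta_{K_i}$ to kill all summands but one. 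That step uses the strong coprimality of the $\Delta_{K_i}$ in an essential way and would not go through for $m$ copies of the same knot, so if you carry your construction forward you would need a separate argument to establish robustness over $\R_n$.
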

 
 \begin{proof} 
 Let $R_m = \#_{i=1}^m K_i$ where $K_i$ is the ribbon knot shown in Figure \ref{fig:BurkeKnot}.  It was shown in \citep[Example 4.10]{CHL11}  that $\Delta_{K_i}$ are strongly coprime for $i \neq j$.  By additivity of the Blanchfield form $\blf_{R_m} = \Sigma \blf_{K_i}$, and therefore $\blf(\eta_i,\eta_j) = 0$ for $i \neq j$.  One can use the Blanchfield form to show that the set $\{\eta_i\}$ is linearly independent in $T\cA(K_n)$ by localizing at the different $\Delta_{K_i}$ and showing that the localized form vanishes on all but one.

\begin{figure}[h!]
\includegraphics[ width = 7cm]{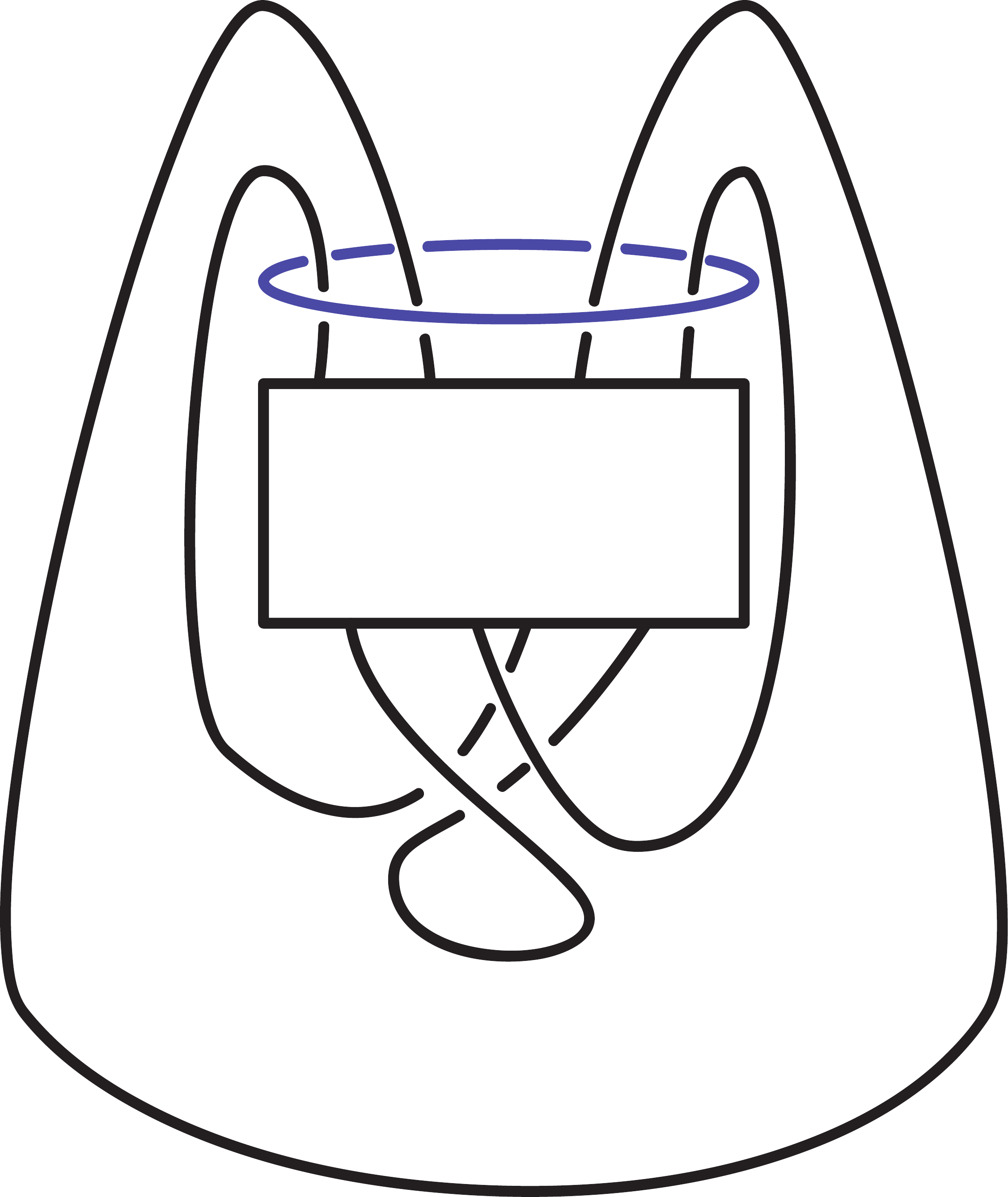}
\put(-100,200){$\eta_n$}
\put(-105,130){\huge$n$}
\caption{$K_{n}$, a knot with $n$ full twists between the bands in the box}
\label{fig:BurkeKnot}
\end{figure}
 \end{proof}
 
 Before we move onto the proof of the main theorem, Theorem \ref{thm:nonTriviality} we need a generalization of an $(n,P)$-solution known as an $(n,P)$-bordism defined in \citep{CHL11}.
 
 \begin{defn}\label{defn:nBordism}{\citep[Definition 7.11]{CHL11}}
 A compact smooth spin 4-manifold $W$ is an {\em $(n,P)$-bordism} for $\partial W$ if 
 \begin{itemize}
 \item $H_2(W;\Z)/H_2(\partial W;\Z)$ has a basis consisting of connected compact oriented surfaces $$\{L_i,D_i \mid 1 \leq i \leq r \quad L_i, D_i\quad \mbox{are embedded in}\quad W\}$$
 \item The surfaces $\{L_i,D_i\}$ have trivial normal bundles.
 \item The surfaces $\{L_i,D_i\}$ are pairwise disjoint except that for each $i$, $L_i$ intersects $D_i$ once transversely with positive sign.  
 \item For each $i$, $\pi_1(L_i) \subset \pi_1(W)_P^{(n)}$ and $\pi_1(D_i) \subset \pi_1(W)_P^{(n)}$.
 \end{itemize}
 \end{defn}
 
 Suppose that $M$ is a 3-manifold and that $\phi: \pi_1(M) \rightarrow \Gamma$ is a group homomorphism.  Cheeger and Gromov defined the $\rho$ invariant which has the $M$ and $\phi$ as its data and is denoted $\rho(M,\phi)$ \citep{ChG85}.  Since $M_K$, the zero surgery on a knot, is a 3-manifold for each group homomorphism $\phi$ we have an invariant $\rho(M_k,\phi)$ and when we say $\rho_0(K)$ we mean $\rho(M_K, Ab)$ where $Ab$ is the abelianization of the fundamental group.  Another property of the $\rho$ invariant is that if $\phi$ factors through $\phi': \pi_1(M) \rightarrow \Gamma'$ where $\Gamma'$ is a subgroup of $\Gamma$ then $\rho(M,\phi) = \rho(M,\phi')$ \citep[Prop. 5.1]{CHL11}.  There is also a universal bound on the $\rho$ invariant which we denote $C_{K}$ for a knot $K$, more specifically for any $\phi$ there exists a $C_{K}$ such that $\mid \rho(M_{K},\phi) \mid < C_{K}$ \citep[Prop 2.3]{CHL08}. 
 
We are now ready to state and prove our main theorem but we give a few comments.  First, the results in Section \ref{sec:doubOp} should be viewed in two parts.  The first is Theorem \ref{lem:Triviality}, which shows that given a sequence of polynomials then some knots $K$ that are $n$-solvable become $(n+1,P)$-solvable if they do not have appropriate torsion Alexander modules.  Theorem \ref{lem:Triviality} does not guarantee that if a knot has the appropriate torsion Alexander modules then it is nontrivial in $\F_{n}/\F_{n+1}^{P}$.  The second is Theorem \ref{thm:nonTriviality}, which shows that there exist knots with the appropriate Alexander modules which do not shift levels in the filtration for the same $P$.  
 
\begin{thm}\label{thm:nonTriviality}
Let $g(t_1,\dots,t_\ell)$ be one of polynomials from Proposition \ref{ex:infFam}.  Let $R_n$ be a knot $J_\ell$ from Lemma \ref{lem:knotFam}.  Let $T_n$ be a copy of the complement of the trivial string link with $\ell$ components embedded in $S^3 \backslash \nu(R_n)$ such that each meridian $\mu_{i,n}$ of $T_n$ is mapped to a unique $\eta_i$ from Lemma \ref{lem:knotFam}.
Let $R_{n-1}$ be a $\ell$-component string link such that 
\begin{itemize}
\item the standard closure of $R_{n-1}$ is a ribbon link from Proposition \ref{prop:ExistenceOfLink}
\item the torsion Alexander polynomial $\Delta_{R_{n-1}} = g(t_1,\dots,t_\ell)g(t_1^{-1},\dots,t_\ell^{-1})$
\end{itemize}

Let $T_{n-1}$ be an unknotted representative of the generator of $T \cA(R_{n-1})$ which exists by Proposition \ref{prop:ExistenceOfLink}.  Let $(R_{i})_{T_i}$ be robust doubling operators as in \citep[Definition, 7.2]{CHL11} for all $1 \leq i \leq n-2$, note that these are knots.  Let $P = (\Delta_{R_1},\Delta_{R_2},\dots,\Delta_{R_n})$.  Let $L_0$ be a $0-$solvable knot with $$ \mid \rho_0(M_{L_0}) \mid > \sum C_{R_i} + 2n + \ell$$ (here $C_{R_i}$ are the Cheeger-Gromov bounds on the $\rho$ invariants).  Then for knots of the form $$L_n = (R_n)_{T_n} ( (R_{n-1})_{T_{n-1}}(\dots((R_1)_{T_1} (L_0)))),$$ we have that $L_n \in \F_n / \F_{n+1}^P$ is non trivial.
\end{thm}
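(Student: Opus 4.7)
The plan is to follow the standard Cheeger--Gromov $\rho$-invariant argument of Cochran--Harvey--Leidy and Burke. I would first check that $L_n \in \F_n$ by iteratively applying Proposition \ref{prop:Solvable}: since $L_0 \in \F_0$ and each $(R_i)_{T_i}$ is a robust torsion doubling operator (so $f_*(\pi_1(T_i)) \subset \pi_1(M_{R_i})^{(1)}$ and each $R_i$ is a slice/ribbon knot or link giving an $(n,*)$-solution for arbitrary $n$), the solvability level is incremented at each stage, yielding $L_n \in \F_n$. The substance of the theorem is the non-triviality in $\F_n/\F_{n+1}^P$, and for this I would argue by contradiction: assume $W$ is an $(n+1,P)$-solution for $M_{L_n}$, and construct a large 4-manifold by attaching the crucial cobordisms $Z_n, Z_{n-1}, \dots, Z_1$ of Definition \ref{defn:cobordismConstruction} to $W$, where $Z_i$ is the crucial cobordism from $M_{L_i}$ to $M_{L_{i-1}} \sqcup M_{R_i}$. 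Call the result $E$, which has boundary $M_{L_0} \sqcup M_{R_1} \sqcup \cdots \sqcup M_{R_n}$.

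Next I would construct the representation $\phi : \pi_1(E) \to \pi_1(E)/\pi_1(E)_P^{(n+1)}$ and apply the $L^2$-signature / Cheeger--Gromov machine. The key algebraic step is to show that, under the derived series localized at $P = (\Delta_{R_1},\dots,\Delta_{R_n})$, the classes $\eta_i$ (respectively the generators of $H_1(T_{n-1})$, $H_1(T_n)$) survive at the appropriate depth and force the meridians of $T_i$ into $\pi_1(E)_P^{(i)} \setminus \pi_1(E)_P^{(i+1)}$. This uses precisely the strong coprimality of the $\Delta_{R_i}$'s (by construction of the $R_{n-1}$ from Proposition \ref{prop:ExistenceOfLink} and the computation of $\blf$ in Corollary \ref{cor:NonTrivBlf}, together with the robustness of the $(R_i)_{T_i}$ for $i \leq n-2$ and the choice of $\eta_i$ from Lemma \ref{lem:knotFam}). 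In particular, the restriction of $\phi$ to $\pi_1(M_{L_0})$ factors through the abelianization, so $\rho(M_{L_0}, \phi|_{M_{L_0}}) = \rho_0(L_0)$; and each $\rho(M_{R_i}, \phi|_{M_{R_i}})$ is bounded in absolute value by $C_{R_i}$.

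Finally, the $L^2$-signature of $E$ minus the ordinary signature of $E$ equals the sum of the $\rho$-invariants on its boundary components. A bookkeeping argument on $H_2(E)$, using that each crucial cobordism contributes only the classes coming from $M_{R_i}$ and $M_{L_i}$ (Lemma \ref{lem:cobordismH}), bounds this signature defect by $2n + \ell$ (two classes from each $(n+1,P)$-bordism piece up to the $\ell$-component contribution). Putting these together yields
\[
|\rho_0(L_0)| \leq \sum_{i=1}^n C_{R_i} + 2n + \ell,
\]
contradicting the hypothesis. The main obstacle I expect is exactly the derived-series bookkeeping: verifying that the representation $\phi$ restricted to each $M_{R_i}$ genuinely factors through the quotient that gives the Cheeger--Gromov bound, and simultaneously that on $M_{L_0}$ it factors through the abelianization. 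This is where the strong coprimality of the $\Delta_{R_i}$ with the $P$-localization, the nontriviality of the localized Blanchfield form from Corollary \ref{cor:NonTrivBlf}, and the robustness of the $(R_i)_{T_i}$ must all be combined to ensure the $\eta_i$'s remain linearly independent in the relevant $P$-localized Alexander module at every level of the tower.
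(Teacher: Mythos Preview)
Your proposal is correct and follows essentially the same argument as the paper: attach the crucial cobordisms to the hypothetical $(n+1,P)$-solution, verify inductively (via the self-annihilating property of the localized Blanchfield kernel, \citep[Thm~7.15]{CHL11}, together with the robustness hypotheses and Corollary~\ref{cor:NonTrivBlf}) that the meridians of each $T_i$ survive in the successive $P$-localized quotients so that $\phi|_{M_{L_0}}$ factors nontrivially through the abelianization, and derive the contradiction from the Cheeger--Gromov bounds. The only imprecision is your accounting for the $2n+\ell$ term: in the paper this comes from bounding $\sum_i |\sigma^{(2)}(E_i)|$ by $\sum_i \beta_2(E_i)$ via \citep[Lemma~2.7]{CHA08} and Lemma~\ref{lem:cobordismH}, not from ``two classes per bordism piece''.
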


\begin{proof} First to simplify notation let $L_k = (R_k)_{T_k}(L_{k-1})$.  By Proposition \ref{prop:Solvable} we note that $L_n \in \F_n$.  For the sake of a contradiction suppose that $L_n$ is $(n+1,P)$-solvable, then there exists an $(n+1,P)$-solution for $M_{L_n}$, namely $V_n$.  Consider the crucial cobordism between $M_{R_k} \sqcup M_{L_{k-1}}$ and $-M_{L_{k}}$, call it $E_k$.  Let $V_{n-1} = V_n \sqcup E_n$ gluing along $M_{L_n}$, and inductively $V_{n-i} = V_{n-i+1} \sqcup E_{n-i+1}$ gluing along $M_{L_{n-i+1}}$.  Observe that $\partial V_i = M_{R_n} \sqcup M_{R_{n-1}} \sqcup \cdots \sqcup M_{R_{i+1}} \sqcup M_{L_i}$.  

We claim that $V_i$ is an $(n+1,P)$-bordism for each $i$.  We only need to verify the conditions on homology and on the fundamental group, because, by construction, $V_i$ it is a compact and spin 4-manifold.  We do this by induction showing that for $i > 0$, $H_2(V_{n-i};\Z)$ is generated by $H_2(V_n) \oplus_{k=n}^{n-i+1} H_2(E_{k}; \Z)$.  

We start with the base case $i = 1$.  The Mayer Vietoris sequence for homology implies that $H_2(V_n) \oplus H_2(E_n) \rightarrow H_2(V_{n-1}) \rightarrow H_1(M_{L_{n}}) \rightarrow H_1(V_n) \oplus H_1(E_n)$ is exact.  We see that $H_1(M_{L_n})$ injects into $H_1(E_n)$ by Lemma \ref{lem:cobordismPi} and therefore the map $H_1(M_{L_n}) \rightarrow H_1(V_n) \oplus H_1(E_n)$ is an injection.  This implies that that the map $H_2(V_n) \oplus H_2(E_n) \rightarrow H_2(V_{n-1})$ is surjective.

This finishes the base case and we move on to the inductive case.  Let $U_1 = V_{n-i+1}$ and let $U_2 = E_{n-i+1}$.  Applying Mayer Vietoris we get the exact sequence $$H_2(V_{n-i+1}) \oplus H_2(E_{n-i+1}) \rightarrow H_2(V_{n-i}) \rightarrow H_1(M_{L_{n-i+1}}) \rightarrow H_1(V_{n-i+1}) \oplus H_1(E_{n-i+1})$$.  The group $H_1(M_{L_{n-i+1}})$ injects into $H_1(E_{n-i+1})$ by Lemma \ref{lem:cobordismPi} and therefore the map $H_1(M_{L_{n-i+1}}) \rightarrow H_1(V_{n-i+1}) \oplus H_1(E_{n-i+1})$ is an injection.  This implies that $H_2(V_{n-i})$ is generated by $H_2(V_{n-i+1}) \oplus H_2(E_{n-i+1})$, which is generated by $H_2(V_n) \oplus_{k = n}^{n-i+1}H_2(E_k)$, as desired.

For each $E_i$, $H_2(E_i)$ is supported on the boundary by Lemma \ref{lem:cobordismH}.  Therefore $H_2(V_i)/ H_2(V_i,\partial V_i)$ is generated by $H_2(V_n)$ verifying the homology condition.  The fundamental group condition follows from the fact that $V_n$ is an $(n+1,P)$-solution.

Let $\Lambda_n =\pi_1(V_n)/\pi_1(V_n)^{(1)}_P$ and  $\R_n = \Q[\Lambda_n]S^{-1}_n$, where $S_n^{-1}$ is used to construct the $P$ series as in Definition \ref{defn:pSeries}.  We see that $\pi_1(V_n)^{(1)}_P = \pi_1(V_n)^{(1)}$ by Definition \ref{defn:pSeries}.  It follows that $\Lambda_n = \Z$ because $V_n$ is an $(n+1,P)$-solution for a knot.  We take homology with twisted coefficients $\R_n$ (note that in the background there is a homomorphism from $\pi_1(M_{L_n})$ and $\pi_1(V_n)$ to $\Lambda_n$ for the twisted homology).  There is a map $i_*: H_1(M_{L_n}; \R_n) \rightarrow H_1(V_n; \R_n)$ induced by the isomorphism $i_*: H_1(M_{L_n}; \Z) \rightarrow H_1(V_n; \Z)$.  The kernel of the map $i_*:H_1(M_{L_n}; \R_n) \rightarrow H_1(V_n;\R_n)$ is self perpendicular under the localized Blanchfield form by \citep[Thm 7.15]{CHL11}.

Recall that $L_n = (R_n)_{T_n}(L_{n-1})$.  We show that $(R_n)_{T_n}$ is a robust torsion doubling operator.  We see that $H_1(T_n)$ maps into $H_1(M_{L_n}; \R_n)$ since the meridians $\mu_{i,n}$ of $T_n$ map into $\pi_1(M_{R_n})^{(1)}_P$.  We see that $\blf_{\R_n} = \bigoplus \blf_{K_{i}}$ from Lemma \ref{lem:knotFam} and by construction $\blf_{\R_n}(\eta_i,\eta_i) = \blf_{K_i}(\eta_i,\eta_i) \neq 0$.  Fixing an $i$ and localizing at $\Delta_{K_i}$ we see that $\blf_{K_j} = 0$ for $j \neq i$, since the $\Delta_{K_i}$ are strongly coprime by  \citep[Example 4.10]{CHL11}.  This shows that $\blf_{\R_n}(\eta_i,\eta_j) = 0$ for $i \neq j$ and that the set $\{\eta_i\}$ is linearly independent over $\Q$ in $H_1(M_{R_n};\R_n)$.  We have shown that $(R_n)_{T_n}$ is a robust torsion doubling operator.  

Also we have shown that $H_1(T_n)$ injects into $H_1(M_{L_n};\R_n)$ since $H_1(T_n)$ injects into $H_1(M_{R_n}; \R_n)$.  Therefore there is a well defined map from $H_1(M_{L_{n-1}};\Z)$ to $H_1(V_n;\R_n)$ since the meridians of $L_{n-1}$ are identified with the meridians of $T_n$.  We can use a new coefficient system which is nontrivial on $\partial V_{n-1}$.  The current coefficient system is summarized in the Diagram \ref{fig:LambdaN} with the diagonal map being an injection.

\begin{figure}

\begin{tikzpicture}[xscale = 5,yscale = 2]
\node (A) at (-0.5,3) {$\pi_1(M_{L_{n-1}})$};
\node (B) at (-0.5,2.0) {$\dfrac{  \pi_1(V_n)^{(1)}_P   }{  \pi_1(V_n)^{(2)}_P}$  };
\node (E) at (0.5,3) {$Ab(\pi_1(M_{L_{n-1}}))$};
\path[->,font=\scriptsize,>=angle 90] 
(A) edge node[above]{} (E)
(A) edge node[right]{} (B)
(E) edge node[right]{} (B);
\end{tikzpicture}
\caption{$\Lambda_n$ Coefficients}
\label{fig:LambdaN}
\end{figure}

By Diagram \ref{fig:LambdaN} it follows that if we take $\Lambda_{n-1} = \pi_1(V_{n-1})/\pi_1(V_{n-1})^{(2)}_P$ then $i_*:\pi_1(M_{L_{n-1}}) \rightarrow \Lambda_{n-1}$ is nontrivial.  Furthermore $i_*$ factors through the abelianization $ab: \pi_1(M_{L_{n-1}}) \rightarrow \Z^\ell$, and $\Z^\ell$ embeds as a subgroup of $\Lambda_{n-1}$.  Next we construct a coefficient system on $V_{n-2}$, and then we proceed inductively.

Let $\R_{n-1} = \Q[\Lambda_{n-1}]S_{n-1}^{-1}$.  We have an induced map $i_*: H_1(M_{L_{n-1}}; \R_{n-1}) \rightarrow H_1(V_{n-1}; \R_{n-1})$.  The kernel of this map is self perpendicular under $\blf_{\R_{n-1}}$, by \citep[Thm 7.15]{CHL11}.  For the meridian $\mu_{n-1}$ of $T_{n-1}$ we have that $\blf_{\R_{n-1}}(\mu_{n-1},\mu_{n-1})$ is not equal to $0$ by the construction in Proposition \ref{prop:ExistenceOfLink}.

We now construct a coefficient system inductively.  Assume that we have a coefficient system for $V_{n-(i-1)}$, namely $\Lambda_{n-(i-1)} = \pi_1(V_{n-(i-1)})/\pi_1(V_{n-(i-1)})^{(i)}_P$ and let $\R_{n-(i-1)} = \Q[\Lambda]S_{n-(i-1)}^{-1}$.  For $\Lambda_{n-{(i-1)}}$ and $M_{L_{n-(i-1)}}$ we have that the map $i_*: \pi_1(M_{L_{n-(i-1)}}) \rightarrow \Lambda_{n-{i-1}}$ factors through the map $ab: \pi_1(M_{L_{n-(i-1)}}) \rightarrow Ab(\pi_1(M_{L_{n-(i-1)}}))$.  We also have that $Ab(\pi_1(M_{L_{n-(i-1)}}))$ injects into $\Lambda_{n-(i-1)}$.  Therefore we have an induced map $i_*: H_1(M_{L_{n-(i-1)}};\R_{n-(i-1)}) \rightarrow H_1(V_{n-(i-1)};\R_{n-(i-1)})$.  The kernel of the map $i_*$ is self perpendicular under the Blanchfield form $\blf_{\R_{n-(i-1)}}$ by \citep[Thm 7.15]{CHL11}.  Recall that $L_{n-(i-1)}$ is equal to $(R_{n-(i-1)})_{T_{n-(i-1)}}(L_{n-i})$.  By construction $\blf_{\R_{n-(i-1)}}(\mu_{n-(i-1)},\mu_{n-(i-1)})$ is not equal to $0$, where $\mu_{n-(i-1)}$ is the meridian of $T_{n-(i-1)}$.  Therefore $H_1(T_{n-(i-1)})$ embeds into $H_1(V_{n-(i-1)};\R_{n-(i-1)})$ and $H_1(M_{L_{n-i}})$ embeds into $H_1(V_{n-(i-1)};\R_{n-(i-1)})$ since the meridian of $T_{n-(i-1)}$ is identified with the meridian of $L_{n-i}$.  Taking $$\Lambda_{n-i} = \pi_1(V_{n-i})/\pi_1(V_{n-i})^{(i+1)}_P$$ we see that $\Lambda_{n-i}$ is a nontrivial coefficient system on $M_{L_{n-i}}$.  Furthermore the coefficient system $\phi: \pi_1(M_{L_{n-i}}) \rightarrow \Lambda_{n-i}$ factors through the abelianization $ab: \pi_1(M_{L_{n-i}}): \rightarrow Ab(M_{L_{n-i}}),$ and $Ab(M_{L_{n-i}})$ embeds as a subgroup of $\Lambda_{n-i}$.

By induction we have a coefficient system $\Lambda_0 = \pi_1(V_0)/ \pi_1(V_0)^{n+1}_P$ which has the property that $\phi: \pi_1(V_0) \rightarrow \Lambda_0$ is a non-trivial coefficient system, and $\Lambda_0$ is a Polytorsion Free Abelian group and $\phi(\pi_1(V_0)^{(i+1)}_P) = 1$.  We apply \citep[Thm 7.13]{CHL11} to say that the Von Neumann $\rho$ invariant $\rho(\partial V_0, \phi)$ is $0$.  

To derive a contradiction we next calculate $\rho(\partial V_0, \phi)$ using a different method.  By definition $\rho(\partial V_0, \phi) = \sigma^{(2)}(V_0) - \sigma(V_0)$ where $\sigma$ denotes the signature and $\sigma^{(2)}$ denotes the $L^2$-signature.  Then $\sigma^{(2)}(V_0) - \sigma(V_0) = \Sigma \sigma^{(2)}(E_i) - \sigma(E_i) + \sigma^{(2)}(V_n) - \sigma(V_n)$ by Novikov additivity of signatures.  Since $H_2(E_i)$ is supported on the boundary it follows that $\sigma(E_i) = 0$.  Since $V_n$ is an $(n+1,P)$ solution and since $\phi \mid_{V_n}$ also meets the hypothesis of \citep[Thm 7.15]{CHL11} it follows that $\rho(L_n,\phi \mid_{V_n}) = \sigma^{(2)}(V_n) - \sigma(V_n) = 0$.  Therefore $\rho(L_n,\phi)$ equals $\Sigma \sigma^{(2)}(E_i)$.  

On the other hand we have the following inequalities, $\mid \rho(L_0,\phi) \mid = \mid \Sigma \sigma^{(2)}(E_i) - \Sigma \rho(M_{R_i},\phi) \mid \leq \Sigma \mid \sigma^{(2)}(E_i) \mid + \Sigma C_{R_i}$, where $C_{R_i}$ are the Cheeger-Gromov bounds.  By \citep[Lemma 2.7]{CHA08} it follows that $\mid \sigma^{(2)} (E_i) \mid$ is bounded by $\beta_2(E_i)$.  Also $\rho(M_{L_0},\phi) = \rho_0(M_{L_0})$, because for $M_{L_0}$, the map $\phi$ factors through the abelianization $\Z$, and $\Z$ injects into $\Lambda_0$ by \citep[Prop. 5.1]{CHL11}.  Therefore $\mid \rho_0(M_{L_0}) \mid = \mid \rho(M_{L_0},\phi) \mid \leq \Sigma C_{R_i} + \Sigma \beta_2(E_i) \leq \Sigma C_{R_i}  + 2n + \ell < \rho_0(M_{L_0})$ which is a contradiction, therefore $L_n$ is not $(n+1,P)$-solvable.\end{proof}

One thing to note about Theorem \ref{thm:nonTriviality} is that the knot $L_0$ satisfying the condition on the $\rho$-invariant are known to exist.  This is well known because $\rho_0(L_0)$ is the integral of the Tristram-Levine signature over the circle.  Then one can compute this for a connected sum of trefoils.

\begin{cor}\label{cor:Genus}
There exist knots $K \in \mathcal{F}_2$ of genus $g$ which are not concordant to knots of genus $g'$ for any $g' < g$.  
\end{cor}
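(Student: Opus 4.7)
The plan is to apply Theorem \ref{thm:nonTriviality} with $n=2$ and $\ell=g$ to produce, for each $g \geq 1$, a knot $K_g = (R_2)_{T_2}\big((R_1)_{T_1}(L_0)\big) \in \mathcal{F}_2 \setminus \mathcal{F}_3^P$, where $P=(\Delta_{R_1},\Delta_{R_2})$ and the polynomial underlying $\Delta_{R_1}$ is strongly irreducible in $g$ variables (Proposition \ref{ex:infFam}). I then want to show that $K_g$ has Seifert genus $g$ and that its concordance genus is also $g$.

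For the Seifert genus, the upper bound follows from the Seifert surface assembled from the infection data: $R_2 = J_g = \#_{i=1}^g K_i$ has genus $g$, and the infection curves lie in $\pi_1^{(1)}$ of the exterior of $R_2$, so they can be arranged to live on this Seifert surface without increasing its genus. For the lower bound, infection along curves in the commutator subgroup preserves the first-order Alexander polynomial, so $\Delta_{K_g}$ is controlled by $\Delta_{R_2}=\prod_{i=1}^g \Delta_{K_i}$; since each Burke knot $K_i$ has $\deg \Delta_{K_i}=2$, we get $\deg \Delta_{K_g}=2g$ and hence $g \leq {\rm genus}(K_g)$.

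For the concordance genus, suppose toward contradiction that $K_g$ is concordant to a knot $K'$ with ${\rm genus}(K')=g'<g$. The key intermediate claim is that a knot of Seifert genus $g'$ admits, up to concordance, an iterated-infection presentation whose outermost infection uses a string link with at most $g'$ components. This is a refinement of \citep[Proposition 1.7]{CFT09}: their construction extracts infection curves from a symplectic basis on a Seifert surface, so the number of components of the infecting string link is bounded by the genus. Granted this claim, Theorem \ref{lem:Triviality} applies to $K'$ with annihilator vector $Q$ whose outermost entry $q_1$ is a polynomial in at most $g'$ variables. By Lemma \ref{lem:strongIrredThenStrongCoprimeToLessVar}, $\Delta_{R_1}$ is strongly coprime to any polynomial in fewer variables, hence $\widetilde{(p_1,q_1)}=1$, so $P$ is strongly coprime to $Q$ in the sense of Definition \ref{defn:multiCoprime}. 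Therefore $K' \in \mathcal{F}_3^P$; since $\mathcal{F}_3^P$ descends to concordance classes, $K_g \in \mathcal{F}_3^P$, contradicting Theorem \ref{thm:nonTriviality}.

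The hardest step is justifying the refinement of \citep{CFT09} used in the concordance-genus argument: one must verify that the number of components of the infecting string link in a Cochran--Friedl--Teichner-type presentation can be bounded by the Seifert genus of the infected knot. This requires a careful inspection of the Seifert-surface-based construction in \citep{CFT09}, tracking how the choice of symplectic basis controls the width of the infecting string link at each stage of the iteration.
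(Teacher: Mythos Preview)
The paper gives no proof of Corollary \ref{cor:Genus}; it is stated bare between Theorem \ref{thm:nonTriviality} and Corollary \ref{cor:main}. So there is nothing in the paper to compare your argument against. Your overall strategy---produce $K_g$ via Theorem \ref{thm:nonTriviality} with $n=2$ and $\ell=g$, pin down its Seifert genus via the degree of $\Delta_{K_g}=\Delta_{J_g}$, and then argue that any concordant knot of smaller genus would land in $\F_3^P$---is the natural one and parallels the proof of Corollary \ref{cor:main}. The genus computation is fine.

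The step you yourself flag as hardest is where the argument is genuinely incomplete. To invoke Theorem \ref{lem:Triviality} with $n=2$ you must present the competitor $K'$ as a two-fold composition of torsion doubling operators $R'^2_{T'_2}\bigl(R'^1_{T'_1}(L'_0)\bigr)$ with $L'_0\in\F_0$, and the coprimality you need is $\widetilde{(p_1,q'_1)}=1$, where $q'_1$ is the annihilator of $H_1(T'_1)$ in $T\cA(R'^1)$---a polynomial in as many variables as $R'^1$ has components. The Cochran--Friedl--Teichner construction, applied to a genus-$g'$ Seifert surface of $K'$, gives only the \emph{outer} layer: $K'\sim R'^2_{T'_2}(L')$ with $R'^2$ a ribbon knot and $L'$ a $g'$-component string link (the derivative). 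You still owe the \emph{inner} layer, namely a decomposition $L'\sim R'^1_{T'_1}(L'_0)$ with $R'^1$ a $g'$-component ribbon link and $L'_0\in\F_0$. The result of \citep{CFT09} you cite is for algebraically slice knots, not string links; nothing in this paper supplies a string-link analogue, and it does not follow from $K'\in\F_2$ that the derivative $L'$ lies in $\F_1$ or admits such a decomposition. (Indeed, the paper's own proof of Corollary \ref{cor:main} sidesteps exactly this issue by \emph{hypothesising} $L'\in\F_1$ rather than deducing it.) Without producing the inner torsion doubling operator, Theorem \ref{lem:Triviality} does not apply to $K'$, and the concordance-genus conclusion does not follow from what you have written.
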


\begin{cor}\label{cor:main}
For any $m > 0 $ there exists an $m$-component string link $L$ and a ribbon knot $R$ such that for any $n$ component string link $L' \in \F_1$ with $n < m$ and any ribbon knot $R'$, the concordance classes $[R_{T}(L)]$ and $[R'_{T'}(L')]$ are distinct.
\end{cor}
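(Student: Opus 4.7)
The plan is to combine Theorem \ref{thm:nonTriviality} with Theorem \ref{lem:Triviality}, using the strong-irreducibility-in-fewer-variables principle of Section \ref{sec:strongIrred}. First apply Theorem \ref{thm:nonTriviality} with $n = 2$ and $\ell = m$: this provides a ribbon knot $R = R_2 = J_m$ from Lemma \ref{lem:knotFam}, an $m$-component string link $R_1$ whose closure is the ribbon link from Proposition \ref{prop:ExistenceOfLink} with $\Delta_{R_1} = g(t_1,\dots,t_m)\,g(t_1^{-1},\dots,t_m^{-1})$ for a $g$ strongly irreducible in $m$ variables (Proposition \ref{ex:infFam}), and an $L_0$ satisfying the $\rho_0$-hypothesis. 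Setting $L := (R_1)_{T_1}(L_0)$ gives an $m$-component string link in $\F_1$ by Proposition \ref{prop:Solvable}, and the theorem yields $L_2 := R_T(L) \in \F_2 \setminus \F_3^P$ for $P = (\Delta_{R_1}, \Delta_{R_2})$.

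I then argue by contradiction: assume there is a ribbon knot $R'$, an embedding $T'$ of the exterior of an $n$-component trivial string link, and an $n$-component string link $L' \in \F_1$ with $n < m$, such that $[R_T(L)] = [R'_{T'}(L')]$. Using a link-theoretic extension of Proposition 1.7 of \citep{CFT09}, replace $L'$ in its concordance class by an infection $(R_1')_{T_1'}(L_0')$, where $R_1'$ is an $n$-component string link whose closure is a ribbon link, $T_1'$ lifts into $\pi_1(M_{R_1'})^{(1)}$ (making $(R_1')_{T_1'}$ a torsion doubling operator in the sense of Definition \ref{defn:torOp}), and $L_0' \in \F_0$. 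Assuming moreover that $R'_{T'}$ itself is a torsion doubling operator (discussed below), $R'_{T'}(L')$ is then concordant to the 2-fold iterated torsion doubling $R'_{T'} \circ (R_1')_{T_1'}(L_0')$, and Theorem \ref{lem:Triviality} becomes applicable with $Q$ having first-level entry $q_1 = \Delta_{R_1'}$ and second-level entry $q_2 = \Delta_{R'}$.

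The critical coprimality check is $\widetilde{(\Delta_{R_1},\Delta_{R_1'})}=1$: since $g$ is strongly irreducible in $m$ variables (verifiable via Lemma \ref{lem:FinalCriterion}), Lemma \ref{lem:strongIrredThenStrongCoprimeToLessVar} gives that $g$ is strongly coprime to any polynomial in fewer than $m$ variables, and the same holds for $g(t_1^{-1},\dots,t_m^{-1})$ via the substitution $t_i \mapsto t_i^{-1}$. Therefore the product $\Delta_{R_1}$ is strongly coprime to $\Delta_{R_1'}$, which lives in a polynomial ring of $n < m$ variables. By Definition \ref{defn:multiCoprime} the vectors $P$ and $Q$ are strongly coprime, so Theorem \ref{lem:Triviality} forces $R'_{T'}(L') \in \F_3^P$. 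Concordance invariance of the filtration would then give $L_2 \in \F_3^P$, contradicting Theorem \ref{thm:nonTriviality}.

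The principal obstacles lie in the second paragraph, corresponding to the restrictions acknowledged just before the corollary. First, a link-level analogue of Proposition 1.7 of \citep{CFT09} is required to produce the decomposition $L' \simeq (R_1')_{T_1'}(L_0')$ with $T_1'$ sitting in the first commutator subgroup so that $(R_1')_{T_1'}$ meets Definition \ref{defn:torOp}; the knot case exploits an algebraic slicing of the Seifert form, and extracting the link version from a $1$-solution demands additional Alexander-module control. Second, the hypothesis $H_1(T') \subset T\cA(R')$ needed to view $R'_{T'}$ as a torsion doubling operator is not automatic from ``$R'$ is a ribbon knot and $L' \in \F_1$'' and is probably the restriction on $L'$ that the paper alludes to. Once both are in place, the remainder of the argument is a direct application of the dimension-counting principle that a polynomial strongly irreducible in $m$ variables cannot be isogenous to any polynomial in fewer variables.
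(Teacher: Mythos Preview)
Your approach is essentially the paper's: construct $R_T(L)$ via Theorem~\ref{thm:nonTriviality} with $n=2$ and $\ell=m$ so that $R_T(L)\in\F_2\setminus\F_3^P$, then use the strong coprimality of $\Delta_{R_1}=g\bar g$ to every polynomial in fewer variables (via Lemma~\ref{lem:strongIrredThenStrongCoprimeToLessVar}) together with Theorem~\ref{lem:Triviality} to force $R'_{T'}(L')\in\F_3^P$, and conclude by concordance invariance of the filtration.

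The one substantive difference is your middle paragraph. The paper's proof simply writes ``Therefore $R'_{T'}(L')\in\F_3^P$ by Theorem~\ref{lem:Triviality}'' and does not insert a CFT09-style decomposition of $L'$ into $(R_1')_{T_1'}(L_0')$. Your instinct that \emph{some} such decomposition is needed to invoke Theorem~\ref{lem:Triviality} with two levels is sound: as stated, that theorem requires each $R_{T_j}$ to be a torsion doubling operator and the innermost seed to lie in $\F_0$, so an arbitrary $L'\in\F_1$ does not obviously feed in. The paper does not explain this step; the ``restrictions on $L$ and $L'$'' alluded to in the introduction are exactly the hypotheses you isolate (that $R'_{T'}$ be a torsion doubling operator, and that $L'$ admit a compatible single-level decomposition). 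So your proposed route through a link-level analogue of \citep[Prop.~1.7]{CFT09} is more explicit than the paper, but it is also where the argument is genuinely incomplete: that link-level statement is not available off the shelf, and assuming it amounts to restricting $L'$ rather than proving the corollary in full generality. In short, you and the paper agree on the architecture; you are more honest than the paper about what remains unverified, and neither argument closes those gaps without imposing extra conditions on $R'_{T'}$ and $L'$.
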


\begin{proof} Let $R$ be a ribbon knot and $L$ be any string link of $m$ components as in Theorem \ref{thm:nonTriviality}, and let $L'$ be a string link of $n$ components, where $n < m$.  From Proposition \ref{ex:infFam} we see that $\Delta_L$ is strongly coprime to all polynomials in fewer variables.  Therefore $R'_{T'}(L')$ is an element of $\F^P_3$ by Theorem \ref{lem:Triviality}.  Applying Theorem \ref{thm:nonTriviality} we see that $R_{T}(L)$ is not in $\F^P_3$, and therefore is not concordant to $[R'_{T'}(T)]$.  Since $T$ was arbitrary this concludes the proof.\end{proof}

\section{Infection Curves from the Alexander Module}\label{sec:goodAModule}
One of the key steps in proving  Theorem \ref{thm:nonTriviality} is finding a curve in the Alexander module which links itself nontrivially.  This helps extend the coefficient system.  This section gives a result on finding such curves for links without having an explicit picture of the link.  If there exists a link with a special type of torsion Alexander polynomial then one can apply the same proof as in Theorem \ref{thm:nonTriviality} and obtain a nontriviality result.

Abstractly, for certain polynomials of the form $p(x_1,\dots,x_n)p(x_1^{-1},\dots,x_n^{-1})$ we can find a ribbon link $L$, with $\Delta_L = p(x_1,\dots,x_n)p(x_1^{-1},\dots,x_n^{-1})$.  We want to infect $L$ at $\eta$ (an unknotted curve in $S^3 \backslash L$) by a knot $K$ to get $L_{\eta}(K)$ which is not slice.  Then we infect a knot $R$ with $L_\eta(K)$ to try to get nontrivial elements in the knot concordance group.  Both of these operations require finding curves in $T\cA(R)$ or $T\cA(L)$ that are not trivial under inclusion.  Proposition \ref{prop:Blanch} is a tool to help with this task.  Note that by reversing orientation on the link we get a group isomorphism from $\cA(L)$ to $\cA(rL)$ where $x_i$ maps to $x_i^{-1}$.  For arbitrary $v$ in $\cA(L)$, we denote its image under this group homomorphism by $\overline{v}$.

\begin{prop}\label{prop:Blanch}
Let $L$ be a link with torsion Alexander polynomial of the form $$\Delta_L = p(x_1,\dots,x_n)p(x_1^{-1},\dots,x_n^{-1}).$$  Assume that  $p(x_1,\dots,x_n)$ is irreducible and $p(x_1,\dots,x_n),p(x_1^{-1},\dots,x_n^{-1})$ are relatively prime.  Then there exists a curve $\eta$ in $\cA(L)$ such that $\blf(\eta, \eta)$ is not $0$.
\end{prop}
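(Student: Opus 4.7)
The plan is to localize the Blanchfield form to a principal‐ideal‐domain setting in which the hypothesized factorization $\Delta_L = p\bar p$ (writing $\bar p := p(x_1^{-1},\dots,x_n^{-1})$) cleanly decomposes $T\cA := T\cA(L)$ into two primary pieces, and then to exploit the fact that the coefficient module $K/S^{-1}\Lambda$ also splits into disjoint primary summands, so that a mixed diagonal value cannot accidentally cancel. Let $\Lambda = \Z[x_1^{\pm 1},\dots,x_n^{\pm 1}]$ and $S = \{r \in \Lambda \mid (r,p\bar p) = 1\}$; Proposition \ref{prop:localizedPID} gives that $S^{-1}\Lambda$ is a PID. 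Since $T\cA$ is annihilated by $\Delta_L$, which is coprime to every element of $S$, the localization map $T\cA \hookrightarrow T\cA \otimes_\Lambda S^{-1}\Lambda =: T\cA_S$ is injective, and by the commutative diagram of Figure \ref{fig:cdBFF} it suffices to produce $\eta \in T\cA$ with $\blf_{S^{-1}\Lambda}(\eta,\eta) \neq 0$.

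I would next determine the structure of $T\cA_S$ and of $\blf_{S^{-1}\Lambda}$. The polynomial $\bar p$ is also irreducible (apply the involution to any factorization) and, by hypothesis, coprime to $p$. A short Fitting-ideal computation shows that the image of the $0$-th elementary ideal of $T\cA$ in $S^{-1}\Lambda$ is $(p\bar p)$; the structure theorem for finitely generated torsion modules over the PID $S^{-1}\Lambda$ then forces $T\cA_S \cong M_p \oplus M_{\bar p}$, where $M_p := S^{-1}\Lambda/(p)$ and $M_{\bar p} := S^{-1}\Lambda/(\bar p)$. Running the same surjectivity argument as in the proof of Corollary \ref{cor:NonTrivBlf} (Poincar\'e duality is an isomorphism; Kronecker evaluation is surjective by the universal coefficient theorem over the PID $S^{-1}\Lambda$; and the restriction map $j$ is surjective because $K/S^{-1}\Lambda$ is divisible) shows that $\blf_{S^{-1}\Lambda}$ is non-trivial. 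For $x \in M_p$, the identities $p\, \blf_{S^{-1}\Lambda}(x,x) = \blf_{S^{-1}\Lambda}(x,px) = 0$ and $\bar p\, \blf_{S^{-1}\Lambda}(x,x) = \blf_{S^{-1}\Lambda}(px,x) = 0$ (sesquilinearity combined with $px = 0$) show that $\blf_{S^{-1}\Lambda}(x,x)$ is annihilated by $(p,\bar p) = S^{-1}\Lambda$, hence zero; symmetrically $\blf_{S^{-1}\Lambda}(y,y) = 0$ for $y \in M_{\bar p}$. Thus all non-trivial content of $\blf_{S^{-1}\Lambda}$ lives in its restriction to $M_p \times M_{\bar p}$, a non-zero sesquilinear pairing $\beta$ whose values automatically lie in the $\bar p$-primary summand of $K/S^{-1}\Lambda$.

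To conclude I would locate $\eta \in T\cA$ whose primary projections are both non-zero. Since $T\cA_S$ is $S^{-1}\Lambda$-generated by the image of $T\cA$, and neither $M_p$ nor $M_{\bar p}$ is zero, $T\cA$ cannot lie in either summand alone; as no module is a union of two proper submodules, some $\eta \in T\cA$ has $x := \pi_p(\eta) \neq 0$ and $y := \pi_{\bar p}(\eta) \neq 0$. Then $\blf_{S^{-1}\Lambda}(\eta,\eta) = \beta(x,y) + \overline{\beta(x,y)}$, with the first summand in the $\bar p$-primary and the second in the $p$-primary part of $K/S^{-1}\Lambda$; these summands intersect trivially (any common element is killed by the unit ideal $(p,\bar p)$), so the total vanishes iff both terms do. Because $M_p$ and $M_{\bar p}$ are simple (as $S^{-1}\Lambda/(p)$ and $S^{-1}\Lambda/(\bar p)$ are fields) and $\beta$ is non-zero, $\beta(x,y) \neq 0$ for every non-zero $x,y$; therefore $\blf_{S^{-1}\Lambda}(\eta,\eta) \neq 0$ and hence $\blf(\eta,\eta) \neq 0$. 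The main obstacle I anticipate is this final passage back from $T\cA_S$ to $T\cA$: producing $\eta$ in the unlocalized module (rather than merely a well-placed element of $T\cA_S$) whose image hits both primary pieces, handled by the union-of-submodules argument above.
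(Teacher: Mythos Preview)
Your proof is correct and shares the decisive observation with the paper's argument: a diagonal value of the Blanchfield form decomposes as a sum of a $p$-primary term and a $\bar p$-primary term in $K/\Lambda$ (equivalently $K/S^{-1}\Lambda$), and these cannot cancel. The routes to that observation, however, are genuinely different. The paper argues directly and elementarily: pick any $e_1,e_2$ with $\blf(e_1,e_2)\neq 0$, replace $e_1$ by $\bar p\,e_1$ if necessary to arrange $\blf(e_1,e_2)=f/p$, and compute $\blf(e_1+e_2,e_1+e_2)=f/p+\bar f/\bar p$; if this vanished in $K/\Lambda$ then $p\mid f$, contradicting $\blf(e_1,e_2)\neq 0$. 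No localization, no structure theorem, no simplicity---just a three-line manipulation.

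Your version trades brevity for structure. Localizing at $S$ and invoking the structure theorem yields $T\cA_S\cong S^{-1}\Lambda/(p)\oplus S^{-1}\Lambda/(\bar p)$ (in particular $T\cA_S$ is cyclic over $S^{-1}\Lambda$, even when $T\cA$ is not cyclic over $\Lambda$); the vanishing of $\blf$ on each summand and the simplicity of the summands then force the cross-pairing to be nondegenerate, and the union-of-submodules trick locates $\eta$ in the unlocalized module. This buys a cleaner conceptual picture---essentially you are showing the localized form is a hyperbolic pairing on two simple pieces---at the cost of more machinery. One small correction: with the paper's convention (linear in the first slot, conjugate-linear in the second) your $\beta(x,y)$ for $x\in M_p$, $y\in M_{\bar p}$ is $p$-primary rather than $\bar p$-primary, but the conclusion that $\beta(x,y)$ and $\overline{\beta(x,y)}$ land in disjoint primary summands is unaffected.
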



\begin{proof} If $T\cA(L)$ is cyclic, that is $\langle \gamma \rangle = T \cA(L)$ for some $\gamma$.  Then $\blf(\gamma,\gamma) \neq 0$ because the Blanchfield form is nontrivial.  So we may assume $T\cA(L)$ is not cyclic.  Since the Blanchfield form is nontrivial we have there exist $e_1,$ and $e_2$ in $T\cA(L)$ such that $\blf(e_1,e_2)$ is not equal to $0$.  If for some $i$ we have that $\blf(e_i,e_i)$ is not $ 0$, then by setting $\eta = e_i$ we are done.  Assume that $\blf(e_i,e_i)$ is $0$, for $i$ equal to $1$ or $2$.  There are three possible values for $\blf(e_1,e_2)$, since $e_1$ and $e_2$ are $\Delta_L$ torsion.  The possible values are $f/\Delta_L$, $f/p$, or $f/\overline{p}$, where $f$ is an element of $K-\Lambda$.  If $\blf(e_1,e_2) = f/\Delta_L$ then we take $e_1' = \overline{p}e_1$.  We conclude that $\blf(e_1',e_2)$ equals $f/p$.  From this fact and by sesquilinearity of $\blf$, we may assume without loss of generality that $\blf(e_1,e_2)$ equals $f/p$.

Calculating $\blf(e_1+e_2,e_1+e_2)$ we see that $\blf(e_1+e_2,e_1+e_2)$ equals $\blf(e_1,e_2)+\blf(e_2,e_1)$, since $\blf(e_i,e_i)$ is $0$ by assumption.  Substituting the values we obtain the following equalities, $\blf(e_1+e_2,e_1+e_2) = f/p + \overline{f}/\overline{p} = (f\overline{p}+ \overline{f}p)/\Delta_L$.  We claim that $\blf(e_1+e_2,e_1+e_2)$ does not equal $0$.

To show this assume for the sake of contradiction that $\blf(e_1+e_2,e_1+e_2)$ is $0$.  Therefore $(f\overline{p}+ \overline{f}p)/\Delta = g$ is an element of $\Lambda$, which means that $f\overline{p}+ \overline{f}p$ equals $g\Delta_L$.  It follows that $p$ must divide $f$ because $p$ is prime, $p$ is relatively prime to $\overline{p}$, $p$ divides $g\Delta_L$, and $\overline{f}p$.  This is a contradiction because $p$ dividing $f$ is the same as $\blf(e_1,e_2) = 0$ but $\blf(e_1,e_2) \neq 0$ by assumption.\end{proof}

\bibliographystyle{plainnat}

\end{document}